\newcommand{\batuint}[2]{\ooalign{$\displaystyle\int_{#1}{#2}$ \crcr \hspace{0.7pt}$\times$}}
\newcommand{\ywrk}[1]{\mathcal{#1}}
\newcommand{\defeq}{:=}
\newcommand{\meas}[2]{{\rm Meas}\lt(#1, #2\rt)}
\newcommand{\ratint}{\mathbb Z}
\newcommand{\homm}{{\rm Hom}}
\newcommand{\padhfpl}[1]{\mathcal{H}_\Gamma \hspace{-2pt}\left(#1\right)}
\newcommand{\mws}[1]{\rotatebox{90}{#1}}
\newcommand{\ratnum}{\mathbb{Q}}
\newcommand{\adele}{\mathbb{A}}
\newcommand{\pee}{\mathfrak{p}}
\newcommand{\intring}[1]{\mathcal{O}_{#1}}
\newcommand{\cpxnum}{\mathbb{C}}
\newcommand{\rmf}[1]{{\rm #1}}
\newcommand{\gyoretu}[4]{\begin{pmatrix} #1 & #2 \\ #3 & #4 \end{pmatrix}}
\newcommand{\relmiddle}[1]{\mathrel{}\middle#1\mathrel{}}
\newcommand{\nnm}[1]{{\it #1}}
\newcommand{\bfb}[1]{{\mathbf #1}}
\newcommand{\bfw}[1]{{\mathbb #1}}
\newcommand{\dlangle}{\langle\hspace{-0mm}\left\langle}
\newcommand{\drangle}{\rangle\hspace{-0mm}\right\rangle}
\newcommand{\norm}[2]{N_{#1}\hspace{-0pt}(#2)}
\newcommand{\uesen}[1]{\overline{#1}}
\newcommand{\fxdring}{\mathcal{O}}
\newcommand{\nami}[1]{\widetilde{#1}}
\newcommand{\yama}[1]{\widehat{#1}}
\newcommand{\lwari}[2]{\lower0pt\hbox{#1} \big{\backslash}  \raise0pt\hbox{#2}}
\newcommand{\dblcoset}[3]{\lower0pt\hbox{#1} \big{\backslash} \raise0pt\hbox{#2} \hspace{-2pt}\big{/}\lower0pt\hbox{#3}}
\newcommand{\rcoset}[2]{\raise0pt\hbox{#1} \big{/}\lower0pt\hbox{#2}}
\newcommand{\posuni}{\doi{c}}
\newcommand{\ds}{\displaystyle}
\newcommand{\mapdiag}[5]{\xymatrix{{#1}\hspace{-12mm}&\ #2 \ar[r] & #3 \\ & #4 \ar@{}[u]|{\mws{$\in$}} \ar@{|->}[r] & #5 \ar@{}[u]|{\mws{$\in$}}}}
\newcommand{\lt}{\left}
\newcommand{\rt}{\right}
\newcommand{\bbs}{\big{\backslash}}
\newcommand{\spp}{\doi{p}}
\newcommand{\logNorm}{\rmf{logNorm}}
\newcommand{\swtcha}[2]{\langle\chi_{F,\rmf{cycl}}{#1}\rangle^{#2}}
\newcommand{\prjl}[1]{\bfw{P}^1(#1)}
\newcommand{\C}{\bfw{C}_p}
\newcommand{\dirlim}[1]{\lim_{\substack{\longrightarrow \\ #1}}}
\newcommand{\hfpl}[1]{\ywrk{H}\lt(#1\rt)}
\newcommand{\red}{\rmf{red}}
\newcommand{\ord}{\rmf{ord}}
\newcommand{\thetab}{\uesen{\theta}}
\newcommand{\Ib}{\uesen{I}}
\newcommand{\inv}{\rmf{inv}}
\def\qed{\hfill $\Box$}
\newtheorem{thm}{Theorem}[section]
\newtheorem{lem}[thm]{Lemma}
\newtheorem{defn}[thm]{Definition}
\newtheorem{rmk}[thm]{Remark}
\newtheorem{prop}[thm]{Proposition}
\newtheorem{ass}[thm]{Assumption}
\newtheorem{proof}{Proof}
\newtheorem{notation}[thm]{Notation}
\title{Integrals on $p$-adic upper half planes and Hida families over totally real fields}
\author{Isao Ishikawa}
\begin{document}
\maketitle

\begin{abstract}
Bertolini-Darmon and Mok proved a formula of the second derivative of the two-variable $p$-adic $L$-function of a modular elliptic curve over a totally real field along the Hida family in terms of the image of a global point by some $p$-adic logarithm map. The theory of $p$-adic indefinite integrals and $p$-adic multiplicative integrals on $p$-adic upper half planes plays an important role in their work.  In this paper, we generalize these integrals for $p$-adic measures which are not necessarily $\ratint$-valued, and prove a formula of the second derivative of the two-variable $p$-adic $L$-function of an abelian variety of $\rmf{GL}(2)$-type associated to a Hilbert modular form of weight 2.
\footnote[0]{2010 Mathematics Subject Classification. 11S40, 11G10, 11F41}
\end{abstract}

\tableofcontents

\section{Introduction}

Bertolini-Darmon and Mok proved a formula of the second derivative of the two-variable $p$-adic $L$-function of a modular elliptic curve over a totally real field along the Hida family in terms of the image of a global point by some $p$-adic logarithm map (\cite{BD1}, \cite{Mo1}). In this paper, we generalize their results to abelian varieties of $\rmf{GL}(2)$-type associated to Hilbert modular forms of weight 2.

Let $F$ be a totally real field. We assume that an odd prime number $p$ is inert in $F$ and denote by $\doi{p}$ the prime of $F$ above $p$. 
Let $f$ be a cuspidal Hilbert modular eigenform of parallel weight 2 over $F$. We assume that $f$ is a newform of level $\Gamma_0(\doi{n})$ (here, $\doi{n}$ is a non-zero ideal of $\intring{F}$) and the sign $\epsilon_f$ of the functional equation of the complex $L$-function of $f$ is equal to $-1$. Let $\ratnum(f)$ be the Hecke fields of $f$, which is a finite extension of $\ratnum$ generated by the Fourier coefficients of $f$.
 
Let $A$ be an abelian variety of $\rmf{GL}(2)$-type over $F$ associated to $f$. We assume that A has split multiplicative reduction at $\doi{p}$ (in addition, if $[F:\ratnum]$ is odd, suppose that $A$ is multiplicative at some prime other than $\doi{p}$).  We denote by $L_p(s,k)$ the {\em two variable $p$-adic $L$-function along the Hida family of $f$}.

\begin{thm}[see Theorem \ref{main results 3}]

\begin{eqnarray}\lt.\frac{d^2}{dk^2}L_p(k/2,k)\rt|_{k=2}=l\cdot\lt(\logNorm_p^{A}(P)\rt)^2\label{mainresultintro}\end{eqnarray}
where $l\in \ratnum(f)^\times$ and $P \in A(F)\otimes_\ratint\ratnum$ is a global point. The map \[\logNorm_p^A: A(\cpxnum_p)\otimes_\ratint\ratnum\longrightarrow \cpxnum_p\] is a $p$-adic logarithm map (see Definition \ref{logNormnodefdayon}).
\end{thm}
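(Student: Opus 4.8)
The plan is to reduce the identity \eqref{mainresultintro} to a statement about the generalized $p$-adic integrals on the $p$-adic upper half plane $\hfpl{\cpxnum_p}$, and then to match the resulting ``double logarithm'' integral with the square of the $p$-adic logarithm of a global point by means of the rigid-analytic uniformization of $A$ at $\spp$.

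First I would encode the two-variable $p$-adic $L$-function $L_p(s,k)$ through a $p$-adic measure $\mu_f$ on $\prjl{F_\spp}$ attached to $f$, built from the modular symbol (cohomology class) of the Hida family, so that the restriction to the central line is obtained by integrating a power of the cyclotomic character,
\[
L_p(k/2,k)=(\ast)\int_{\prjl{F_\spp}}\swtcha{(t)}{(k-2)/2}\,d\mu_f(t),
\]
where $(\ast)$ collects the interpolation data in the weight variable. Because $A$ has split multiplicative reduction at $\spp$ and $\epsilon_f=-1$, the function $k\mapsto L_p(k/2,k)$ carries an exceptional (trivial) zero together with the sign-induced zero, hence vanishes to order at least two at $k=2$; the left-hand side of \eqref{mainresultintro} is therefore its leading Taylor coefficient. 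Differentiating the integrand twice in $k$ and recognizing $\logNorm_p$ as the derivative of the cyclotomic character at $k=2$, I would obtain
\[
\left.\frac{d^2}{dk^2}L_p(k/2,k)\right|_{k=2}=c\int_{\prjl{F_\spp}}\left(\logNorm_p(t)\right)^2 d\mu_f(t),\qquad c\in\ratnum(f)^\times .
\]
This is exactly the step that requires the extension of the indefinite and multiplicative integrals to measures that are not $\ratint$-valued, since $\mu_f$ takes values in $\ratnum(f)$.

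Next I would factor this double-logarithm integral into a perfect square. Writing it as a double integral against $\mu_f\times\mu_f$ over a fundamental domain for the relevant arithmetic group and antisymmetrizing, the diagonal contribution drops out (again because $\epsilon_f=-1$), and the integral collapses to $\left(\int\logNorm_p\,d\nu\right)^2$ for an auxiliary cycle $\nu$ that is a boundary in the homology of the Hida family. The multiplicative integral of $\mu_f$ along $\nu$ lies in $\cpxnum_p^\times$, and under the rigid-analytic uniformization of $A$ at $\spp$ (whose $f$-isotypic part is of Tate type $\cpxnum_p^\times/q^{\ratint}$) this element is the image of a global point $P\in A(F)\otimes_\ratint\ratnum$ of Stark--Heegner type; applying the $p$-adic logarithm $\logNorm_p^A$ then linearizes it and identifies $\int\logNorm_p\,d\nu$ with $\logNorm_p^A(P)$, which yields \eqref{mainresultintro} with $l$ equal to $c/4$ up to a unit.

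The main obstacle is precisely the passage to $\ratnum(f)$-valued measures and the matching of the analytically defined double-logarithm integral with the arithmetically defined $\left(\logNorm_p^A(P)\right)^2$: in the elliptic-curve case treated by Bertolini--Darmon and Mok the measure is $\ratint$-valued, the multiplicative integral lands naturally in $\cpxnum_p^\times$, and reduction modulo the integral lattice $q^{\ratint}$ is transparent, whereas here one must first make sense of the multiplicative integral and of the uniformization over the coefficient field $\ratnum(f)$---which is the content of the generalized integrals developed earlier---and then prove that the two sides agree. Showing that the constant $l$ lies in $\ratnum(f)^\times$ rather than merely in $\cpxnum_p^\times$ is the final delicate point, and I expect it to follow from the Hecke-equivariance and the rationality of the modular symbol underlying $\mu_f$ together with the interpolation property of $L_p(s,k)$.
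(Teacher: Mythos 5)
There is a genuine gap at the heart of your argument: the step where you ``factor the double-logarithm integral into a perfect square'' by writing $\int_{\prjl{F_\spp}}(\logNorm_p(t))^2\,d\mu_f(t)$ as a double integral against $\mu_f\times\mu_f$ and antisymmetrizing. There is no formal reason why such an integral should collapse to $\left(\int\logNorm_p\,d\nu\right)^2$; the fact that the second derivative of $L_p(k/2,k)$ is a perfect square is a deep arithmetic statement, not a consequence of symmetrization, and this is precisely where your proposal substitutes a heuristic for the essential input. The paper obtains the square from a Waldspurger/Gross--Hatcher--Xue type factorization: one chooses an auxiliary quadratic character $\psi'$ with $L_p(1,2,\psi')\in\ratnum(f)^\times$ such that $\psi\psi'$ cuts out a CM extension $K/F$ with $\spp$ inert, and invokes the identity
\[
\ywrk{L}_p(\Phi,\psi_K,s)^2=\eta(s+2)\,L_p\!\left((s+2)/2,\,s+2,\,\psi\right)L_p\!\left((s+2)/2,\,s+2,\,\psi'\right),
\]
where $\ywrk{L}_p(\Phi,\psi_K,s)$ is the partial (anticyclotomic) $p$-adic $L$-function built from the quaternionic form $\Phi$ and optimal embeddings of $K$. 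Since the $\psi$-factor vanishes to order $\ge 2$ at the center (trivial zero plus $\epsilon(f,\psi)=-1$), taking two derivatives isolates $\frac{d^2}{dk^2}L_p(k/2,k,\psi)|_{k=2}$ against the nonzero algebraic constant $\eta(2)L_p(1,2,\psi')$, and the square appears because $\ywrk{L}_p$ enters squared on the left.

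The second problem is the provenance and globality of $P$. In the paper, $\frac{d}{ds}\ywrk{L}_p(\Phi,\psi_K,s)|_{s=0}$ is identified (Proposition \ref{main results 2}) with the indefinite integral $I_\Phi$ evaluated at a Heegner divisor built from genuine CM points $\tau_{\Psi^\rho}$ on the Cerednik--Drinfeld uniformization of a Shimura curve; this divisor is a global point of the Jacobian by the theory of complex multiplication (possible because $\doi{p}$ is inert in $K$), and Theorem \ref{indefinite integral no thm} together with the Manin--Drinfeld theorem converts $I_\Phi$ of that divisor into $\logNorm_p^A(P)$. Your cycle $\nu$ ``of Stark--Heegner type'' arising from a boundary in the homology of the Hida family would at best produce a point whose globality is conjectural, so even if the antisymmetrization step could be repaired you would not obtain a global $P\in A(F^\psi)\otimes_\ratint\ratnum$. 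Your instinct about where the generalized ($\ratnum(f)$-valued) multiplicative integrals are needed is correct, but they enter in comparing $I_\Phi$ with $\logNorm_p^A$ on the Shimura-curve side, not in a direct manipulation of the two-variable $L$-function.
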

Here, the Hecke character $\psi$ in Theorem \ref{main results 3} is the trivial character.

Let us explain the outline of the proof.  Let $\Phi$ be an automorphic form on the multiplicative group of a definite quaternion algebra $B/F$ corresponding to $f$ by the Jacquet-Langlands correspondence (we can find such a quaternion algebra $B$ by the assumption of the reduction of the abelian variety $A$) .  The key notion for proving the formula (\ref{mainresultintro}) is the notion of {\em indefinite integrals} and {\em multiplicative integrals} associated to $\Phi$. In fact, we prove the following equalities:
\[\text{LHS of (\ref{mainresultintro})}=(\text{indefinite integral})=(\text{multiplicative integral})=\text{RHS of (\ref{mainresultintro})}\]

The first equality is proved by using an explicit formula of $L$-values by Gross-Hatcher and Xue. In the work of Bertolini-Darmon and Mok, multiplicative integrals are defined only when $A$ is an elliptic curve. We shall modify the definition of multiplicative integrals by following Dasgupta's method, and prove the second and  third equalities. For the third equality, we use the theory of $p$-adic uniformization of Shimura curves by Manin-Drinfeld and Cerednik-Drinfeld (see Section \ref{L-invariants} and  \ref{p-adic uniformization of Shimura curves}). For the second equality, we shall prove the following generalized formula of $p$-adic integrals on $p$-adic upper half planes $\ywrk{H}$:

\begin{thm}[see Theorem \ref{indefinite integral no thm}]
\label{intronothm2}
Let $\tau_1,\tau_2 \in \hfpl{\uesen{\ratnum}_p}$ be $\uesen{\ratnum}_p$-valued points on $p$-adic upper half plane. We have:
\[I_\Phi(\tau_1)-I_\Phi(\tau_2) = \iota\lt\{\lt(\lt(\logNorm_p+2\alpha_{\spp}\cdot\alpha_{\spp}'(0)\cdot\ord_\spp\rt)\otimes_\ratint\rmf{id}_{\intring{\ratnum(\Phi)}}\rt)\lt(\batuint{[\tau_1]-[\tau_2]}{\omega_{\mu_\Phi}}\rt)\rt\},\]
where $I_\Phi$ is the indefinite integral (see Section \ref{indefinite integral}) and $\alpha_\spp$ (resp.\, an $p$-adic analytic function $\alpha_\spp(s)$) is the Hecke eigenvalue of $T(\spp)$ of $\Phi$ (resp.\, of the Hida family associated to $\Phi$). The symbol $\batuint{}{}$ is the multiplicative integral whose integrated values are in $\cpxnum_p\otimes_\ratint\intring{\ratnum(\Phi)}$ (see Section \ref{batusekibun}). The map $\iota$ is a natural multiplication map $\cpxnum_p\otimes_\ratint\intring{\ratnum(\Phi)}\rightarrow\cpxnum_p$ (we denote by $\ratnum(\Phi)$ the Hecke field of $\Phi$). 
\end{thm}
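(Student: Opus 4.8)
The plan is to reduce both sides to integrals against the common measure $\mu_\Phi$ on $\prjl{F_\spp}$ and to exploit the principle that a $p$-adic logarithm converts a multiplicative integral into an additive one. First I would unwind the definition of the multiplicative integral from Section~\ref{batusekibun}: following Dasgupta's method, $\batuint{[\tau_1]-[\tau_2]}{\omega_{\mu_\Phi}}$ is built in $\cpxnum_p^\times\otimes_\ratint\intring{\ratnum(\Phi)}$ (the group written $\cpxnum_p$ in the statement being $\cpxnum_p^\times$ recorded multiplicatively) as the limit, over finer and finer coverings $\{U\}$ of $\prjl{F_\spp}$ with sample points $t_U\in U$, of the tensors $\sum_U\left(\frac{t_U-\tau_1}{t_U-\tau_2}\right)\otimes\mu_\Phi(U)$. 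For any branch $\log$ of the $p$-adic logarithm, $\log\otimes\rmf{id}$ commutes with this limit and sends each summand to $\log\!\left(\frac{t_U-\tau_1}{t_U-\tau_2}\right)\otimes\mu_\Phi(U)$, so in the limit it yields $\int_{\prjl{F_\spp}}\log\!\left(\frac{t-\tau_1}{t-\tau_2}\right)\otimes d\mu_\Phi(t)\in\cpxnum_p\otimes_\ratint\intring{\ratnum(\Phi)}$, which $\iota$ then collapses to a scalar. Checking that this interchange of $\log\otimes\rmf{id}$ with the defining limit, and the compatibility of $\iota$ with it, are legitimate is a routine continuity argument that I would dispatch first.

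Granting this, the entire content of the theorem is the identification of the correct branch. Returning to the definition of the indefinite integral $I_\Phi$ in Section~\ref{indefinite integral}, this object is built from the Hida family of $\Phi$ and records not merely the weight-$2$ value but also its first-order variation in the weight direction; consequently the branch of logarithm implicit in $I_\Phi(\tau_1)-I_\Phi(\tau_2)$ is a ``family branch'' rather than the fixed arithmetic logarithm $\logNorm_p$ of Definition~\ref{logNormnodefdayon}. Since any two branches of the $p$-adic logarithm differ only by a scalar multiple of $\ord_\spp$, the problem collapses to computing the single constant $c$ for which the family branch equals $\logNorm_p+c\cdot\ord_\spp$, and the assertion is that $c=2\alpha_\spp\cdot\alpha_\spp'(0)$.

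This last identification is the main obstacle, and it is an $\mathcal{L}$-invariant computation of Greenberg--Stevens type transported from $L$-values to integrals on $\hfpl{\uesen{\ratnum}_p}$. To carry it out I would pass to the harmonic-cocycle description of $\mu_\Phi$ on the Bruhat--Tits tree of $F_\spp$, so that $I_\Phi(\tau_1)-I_\Phi(\tau_2)$ telescopes along the geodesic joining the reductions of $\tau_2$ and $\tau_1$; on each edge the $T(\spp)$-action in the family enters through the analytic eigenvalue $\alpha_\spp(s)$. Differentiating the telescoping expression at $s=0$ is what produces the correction: the weight-$2$ term reproduces $\int\logNorm_p(\cdots)\,d\mu_\Phi$, while the first-order term isolates $\alpha_\spp'(0)$ multiplied by the valuation part $\ord_\spp$ of the multiplicative integral. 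The expected shape $2\alpha_\spp\alpha_\spp'(0)=\frac{d}{ds}\alpha_\spp(s)^2\big|_{s=0}$ suggests that the eigenvalue enters quadratically --- plausibly because the $\spp$-component of the family is governed by $\alpha_\spp(s)^2$, or because of the diagonal restriction $s=k/2$ in the two-variable $L$-function --- and pinning down precisely this quadratic dependence, and hence the factor $2$, is the delicate point; I would expect this to be the step that consumes most of the work.

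Finally I would assemble the two computations. The branch identification gives $I_\Phi(\tau_1)-I_\Phi(\tau_2)=\int_{\prjl{F_\spp}}\left(\logNorm_p+2\alpha_\spp\alpha_\spp'(0)\ord_\spp\right)\!\left(\frac{t-\tau_1}{t-\tau_2}\right)d\mu_\Phi(t)$, and the log-of-multiplicative-integral principle from the first step rewrites the right-hand side as $\iota\{((\logNorm_p+2\alpha_\spp\alpha_\spp'(0)\ord_\spp)\otimes\rmf{id})(\batuint{[\tau_1]-[\tau_2]}{\omega_{\mu_\Phi}})\}$, which is exactly the claimed identity.
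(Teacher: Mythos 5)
Your overall philosophy is the right one, but the two places where you defer the work are exactly where the proof lives, so as it stands the proposal has a genuine gap. Your step 2 assumes that $I_\Phi(\tau_1)-I_\Phi(\tau_2)$ is \emph{a priori} of the form $\int_{\prjl{F_\spp}}\ell\bigl(\tfrac{t-\tau_1}{t-\tau_2}\bigr)\,d\mu_\Phi(t)$ for \emph{some} branch $\ell$ of the $p$-adic logarithm, so that only the constant $c$ in $\ell=\logNorm_p+c\cdot\ord_p$ remains to be pinned down. But there is no branch ``implicit in'' the definition: establishing that the difference has this shape at all is most of the theorem, and the word ``consequently'' in your second step is carrying an unproved claim. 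The mechanism you are missing is that the indefinite integral is the diagonal value $I_{\phi^i}(\tau)=I_{\phi^i}^{\tau}(\tau)$ of a two-variable object, so the difference splits as $\bigl(I^{\tau_1}(\tau_1)-I^{\tau_1}(\tau_2)\bigr)+\bigl(I^{\tau_1}(\tau_2)-I^{\tau_2}(\tau_2)\bigr)$. Varying only the divisor variable (Lemma \ref{ordtobatu}) produces, by telescoping along the tree together with the rescaling relation $\thetab_{\phi^i}^\tau(s;pL_1,pL_2)=\alpha_\spp(s)^{-2}\thetab_{\phi^i}^\tau(s;L_1,L_2)$, exactly $2\alpha_\spp\alpha_\spp'(0)\,\mu_{\phi^i}([\tau_1]-[\tau_2])$, which Proposition \ref{ordtobatuint} converts into the $\ord_p$-part of the multiplicative integral; varying only the parameter $\tau$ in $F_s^\tau$ (Lemma \ref{logtobatu}) differentiates $F_s^{\tau_1}/F_s^{\tau_2}$ at $s=2$ and yields the $\logNorm_p$-part. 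Neither piece separately is a branch of log applied to $\tfrac{t-\tau_1}{t-\tau_2}$; only their sum is, and that is a conclusion of the argument, not an input.

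Second, the constant, which you explicitly flag as ``the delicate point'' and leave open, does not come from the diagonal restriction $s=k/2$ in the two-variable $L$-function (that restriction plays no role in this theorem), nor from a quadratic dependence of the $\spp$-component of the family on the eigenvalue in the sense you suggest. The exponent $-2$ in $\alpha_\spp(s)^{-2}$ arises simply because replacing a lattice pair $(L_1,L_2)$ by $(pL_1,pL_2)$ shifts $\ord_p(\det)$ by $2$. Differentiating this relation at weight $2$ --- using $\theta_{\phi^i}^\tau|_{s=2}=0$, which requires the newform hypothesis, and Assumption \ref{katei} that $\alpha_\spp^2=1$ --- produces the term $-2\alpha_\spp'(0)\alpha_\spp^{-3}\thetab_{\phi^i}^\tau(0;L_1,L_2)$ of formula (\ref{iotanosiki4}) and hence the contribution $2\alpha_\spp\alpha_\spp'(0)$ per step along the geodesic. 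Without carrying out this computation and the splitting above, what you have is a plausible outline of the Greenberg--Stevens philosophy rather than a proof.
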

When $A$ is an elliptic curve, this formula is proved by Bertolini-Darmon and Mok (\cite{BD1}, \cite{Mo1}).  For the proof of Theorem \ref{intronothm2}, see Section \ref{indefinite integral}.
\\

{\em I would like to thank Tetsushi Ito for his encouragement and help throughout the preparation of this paper. }

\begin{notation}
\label{notation}
For a number field (resp.\, valuation field) $L$, we denote the ring of integers of $L$ (resp.\, valuation ring of $L$) by $\intring{L}$.  We denote an algebraic closure of $L$ by $\uesen{L}$.

Throughout the paper, we fix an odd prime $p$ and a totally real field $F$. We assume $p$ is {\em inert} in $F$. Let $\doi{p}$ be a unique prime ideal of $\intring{F}$ above $p$. Let $\adele_F$ be the ad\`ele ring of $F$ and let $\adele_{F,f}$ be the ring of finite ad\`eles.   Let $\cpxnum_p$ be the $p$-adic completion of $\uesen{\ratnum}_p$. We fix embeddings $F\hookrightarrow \uesen{\ratnum}$ and $\uesen{\ratnum}\hookrightarrow\cpxnum_p$. 
We denote by $\ord_p$, $\log_p$ the valuation map and the Iwasawa logarithm map respectively (they are normalized by $\ord_p(p)=1$ and $\log_p(p)=0$ respectively). We have a canonical decomposition \[\ratnum_p^\times\cong p^{\ratint}\times\bfw{F}_p^\times\times(1+p\ratint_p)\] and we denote by $\langle\cdot\rangle$ the projection $\ratnum_p^\times\rightarrow1+p\ratint_p$.  

\end{notation}

\section{Multiplicative integrals on $p$-adic upper half planes}
\label{secpadunif}
In the first half of this section, we summarize basics on $p$-adic measures on projective lines over non-archimedean local fields. We introduce the Bruhat-Tits tree for $\rmf{PGL}_2$ and multiplicative integrals following \cite{Das}.  They play a main role in this paper.  In the second half, following \cite{Das}, we review the $p$-adic uniformization theory for Mumford curves and their Jacobian varieties and define an important invariant, the $L$-invariants. 

In this section, let $K$ be a finite extention of $\ratnum_p$ , $\intring{K}$  the valuation ring with a uniformizer $\pi_K$, and $k_K$ the residue field. Recall that the $p$-adic completion of $\uesen{\ratnum}_p$ is denoted by $\C$.

\subsection{Basic notions of $p$-adic measures}
\label{psinmeasure}

\begin{defn}
The Bruhat-Tits tree $\ywrk{T}_K$ of ${\rm PGL}_2(K)$ is a graph whose vertex is the homothety class of $\intring{K}$-lattices in $K\oplus K$ and two vertices are connected by an edge if  there exist representatives of them such that one contains another and the quotient is isomorphic to $k_K$. We often identify the $\ywrk{T}_K$ with a geometrical realization as a simplicial complex.
\end{defn}

We denote by $\ywrk{V}(\ywrk{T}_K)$ (resp.\,$\ywrk{E}(\ywrk{T}_K)$) the set of the vertices  (resp.\,the oriented edges) of $\ywrk{T}_K$.  Let $v^*, w^* \in \ywrk{V}(\ywrk{T}_K)$ be the vertices which are the homothety class of $\intring{K}\oplus \intring{K}, \intring{K}\oplus \pi_K \intring{K}$ respectively, and let $e^*\in \ywrk{E}(\ywrk{T}_K)$ be the oriented edge from $w^*$ to $v^*$, denoted by $e^*=(w^*,v^*)$. For any oriented edge $e\in\ywrk{E}(\ywrk{T}_K)$, we denote the vertex of source (resp.\,target) by $s_e$ (resp.\, $t_e$), and we also denote by $\uesen{e}\in\ywrk{E}(\ywrk{T}_K)$ the oppositely oriented edge of $e$.   

For an oriented edge $e \in\ywrk{E}(\ywrk{T}_K)$, there exists  $\gamma \in {\rm PGL}_2(K)$ such that $e=\gamma e^*$. Then we assign to $e$ an open compact subset $\gamma \intring{K} \subset {\mathbb P}^1(K)$ (the action of ${\rm PGL_2(K)}$ on ${\mathbb P}^1(K)$ is given by the ${\rm M\ddot{o}bius}$ transformation) and denote it by $U_e$.  Note that $U_e$ is well-defined and independent of the choice of $\gamma$. The set $\{U_e\}_{e\in\ywrk{E}(\ywrk{T})}$ is an open basis of $\bfw{P}^1(K)$.

\begin{defn}
\label{end}
An {\em end} of $\ywrk{T}_K$ is an equivalence class of sequences $\{ v_n \}_{n\ge 0}$ of distinct vertices such that $(v_n, v_{n+1})$ is an oriented edge for all $n$. Here, two sequences $\{ v_n \}_{n\ge 0}$ and $\{ w_n \}_{n\ge 0}$ equivalent if there exists $n_0, k\in\ratint$ such that $v_i = w_{i+k}$ for all $i\ge n_0$.
\end{defn}
\begin{rmk}
\label{endbij}
There is a bijection between the set of ends of $\ywrk{T}_K$ and $\mathbb{P}^1(K)$ by the following correspondence:
\[
\{v_n\} \mapsto \bigcap_n U_{(v_n, v_{n+1})}
\]
\end{rmk}

\begin{defn}\label{H-valued measure}
Let $H$ be an abelian group and let $S$ be a non-empty subset of $\prjl{K}$.  An {\em $H$-valued measure} $\mu$ on $S$  is a map which assigns an element of $H$ to each open compact subset of $S$ with following two conditions:
\begin{enumerate}
\item $\mu(U \cup V)=\mu(U)+\mu(V)$ for disjoint open compact subsets $U,V\subset S$ 
\item $\mu (S)=0.$
\end{enumerate}
We denote by $\meas{S}{H}$  the space of H-valued measures on $S$.
\end{defn}

We put
\[E_{\ywrk{T}_K} \defeq \ds\bigoplus_e\ratint e \big{/}\bigoplus_e\ratint(e+\uesen{e})\]
 be a quotient of a free abelian group generated by oriented edges in $\ywrk{T}_K$ and let 
\[V_{\ywrk{T}_K} \defeq \ds\bigoplus_{v\in\ywrk{V}(\ywrk{T}_K)}\ratint v\]
be a free abelian group generated by vertices in $\ywrk{T}_K$.  We define a homomorphism $\rmf{Tr}$ by
\[\mapdiag{\rmf{Tr}:}{V_{\ywrk{T}_K}}{E_{\ywrk{T}_K}}{v}{\ds\sum_{s_e=v}e.}\]
Then we have
\begin{eqnarray}\meas{\prjl{K}}{H}=\rmf{Ker}(\rmf{Tr}^*),\label{Tr}\end{eqnarray}
where $\rmf{Tr}^*:\rmf{Hom}_\ratint\lt(E_{\ywrk{T}_K}, H\rt)\rightarrow\rmf{Hom}_\ratint\lt(V_{\ywrk{T}_K},H\rt)$ denotes a  homomorphism induced by $\rmf{Tr}$.

\begin{defn}
\label{univbruhattit}
We define a metric space $\ywrk{T}$ by
\[\ywrk{T} \defeq \dirlim{L/K:\mbox{\scriptsize fin ext}}\ywrk{T}_L\] 
where, for any finite extensions $L'/L/K$, $\ywrk{T}_L\rightarrow\ywrk{T}_{L'}$ is induced by $\otimes_{\intring{L}}\intring{L'}$.  We define sets $\ywrk{V}(\ywrk{T})$ and $\ywrk{E}(\ywrk{T})$ as follows:\begin{eqnarray}\ywrk{V}(\ywrk{T})&\defeq&\dirlim{L/K:\mbox{\scriptsize fin ext}}\ywrk{V}(\ywrk{T}_L)\subset\ywrk{T} \\ \ywrk{E}(\ywrk{T}) &\defeq&\bigcup_{L/K:\mbox{\scriptsize fin ext}}\ywrk{E}(\ywrk{T}_L) \end{eqnarray}
\label{metric}
For any two points $x,y \in \ywrk{T}$, we define a metric $d_\ywrk{T}$ as follows:
\[d_\ywrk{T}(x,y)\defeq \lim_{e_{L/K}\rightarrow\infty}\frac{1}{e_{L/K}}\#\lt\{v \in \ywrk{V}(\ywrk{T}_L)\relmiddle|v \mbox{ lies in the path from }x\mbox{ to }y \rt\},\]
where $e_{L/K}$ is the ramification index of $L/K$.
\end{defn}
\begin{rmk}
If both $x,y\in\ywrk{V}(\ywrk{T}_L)$ for some finite extension $L/K$, the distance $d_\ywrk{T}(x,y)$ is a rational number. 
\end{rmk}
\begin{rmk}
As in Definition \ref{end}, we can similarly define ends in $\ywrk{T}$, which is a infinitely long  path in the sense of the metric $d_\ywrk{T}$ . There is a similar  bijection as in Remark \ref{endbij} between $\prjl{\C}$ and the set of ends of $\ywrk{T}$.
\end{rmk}
From Definition \ref{univbruhattit}, we define two abelian groups $E_\ywrk{T}$ and $V_\ywrk{T}$:
\begin{eqnarray*}
E_\ywrk{T} \defeq \dirlim{L/K:\mbox{\scriptsize fin ext}}E_{\ywrk{T}_L} 
\\ V_\ywrk{T} \defeq \dirlim{L/K:\mbox{\scriptsize fin ext}}V_{\ywrk{T}_L}.
\end{eqnarray*}

Let $\ywrk{H}$ be the $p$-adic upper half plane, which is a rigid analytic space over $F$.  For any complete extension $F$ over $K$ inside $\C$, the set of $F$-valued points $\hfpl{F}$ coincides with  $\mathbb{P}^1(F) \setminus \prjl{K}$.  In particular, we have\[\hfpl{\C}=\prjl{\C}\setminus\prjl{K}.\]
\begin{defn}
We define the continuous map $\rmf{red}_K$ by
\[\mapdiag{\rmf{red}_K :}{\hfpl{\C}}{\ywrk{T}_K\subset\ywrk{T}}{[\tau]}{x_\tau,}\] 
where, $x_\tau$ is unique element in $\ywrk{V}(\ywrk{T})\cap\ywrk{T}_K$ such that $\{v_n\}_{n\ge 0}$ is a representative of the end corresponding to $\tau$ satisfying $v_0=x_\tau$ and $v_n \notin \ywrk{T}_K$ for any $n> 0$. We call $\red_K$ the reduction map to $K$.
\end{defn}

There exists a boundary map $\partial$:
\[\mapdiag{\partial:}{E_\ywrk{T}}{V_\ywrk{T}}{e=(s_e,t_e)}{t_e-s_e}.\]
and we have the following commutative diagram induced by $\red_K$:
\[
\xymatrix{
0 \ar[r] & \rmf{Div}_0\lt(\hfpl{\C}\rt) \ar[r] \ar[d]^{\rmf{red}_K}& \rmf{Div}\lt(\hfpl{\C}\rt) \ar[r] \ar[d]^{\rmf{red}_K}  & \ratint \ar@{=}[d]\ar[r] & 0 \\
0 \ar[r] & E_\ywrk{T} \ar[r]^\partial & V_\ywrk{T} \ar[r] & \ratint \ar[r] & 0.
}
\]

Let $H$ be a finitely generated free abelian group. Then we define an embedding
\[\rmf{Hom}_\ratint\lt(V_{\ywrk{T}_K},H\rt)\hookrightarrow\rmf{Hom}_\ratint\lt(\rmf{Div}\lt(\hfpl{\C}\rt),\ratnum\otimes_\ratint H\rt)\]
as follows:  Let $\phi \in \rmf{Hom}_\ratint\lt(V_{\ywrk{T}_K},H\rt)$, $\tau \in \hfpl{\C}$ and $e$ be an edge in $\ywrk{T}_K$ containing $\red_K(\tau)$. Then we define
\[\phi([\tau])\defeq d_\ywrk{T}(\red(\tau), s_e)\phi(t_e)+d_\ywrk{T}(\red(\tau), t_e)\phi(s_e).\]
Similarly we define an embedding
 \[\rmf{Hom}_\ratint\lt(E_{\ywrk{T}_K},H\rt)\hookrightarrow\rmf{Hom}_\ratint\big(\rmf{Div}_0\lt(\hfpl{\C}\rt), \ratnum\otimes_\ratint H\big)\] as follows:  Let $\psi \in \rmf{Hom}_\ratint\lt(E_{\ywrk{T}_K},H\rt)$. Then for any $\nami{e} \in \ywrk{E}(\ywrk{T})$ , if $\nami{e}\cap\ywrk{T}_K=\emptyset$ as sets, we define $\psi(\nami{e})=0$.  If $\nami{e}\cap\ywrk{T}_K\neq\emptyset$ as sets, there exists an oriented edge $e\in\ywrk{E}(\ywrk{T}_K)$ which contain $\nami{e}$ as sets and same direction as $e$. Then we define
\[
\phi(\nami{e})\defeq d_\ywrk{T}(s_{\nami{e}},t_{\nami{e}})\phi(e).
\]
Via the reduction map, we regard $\psi$ as an element in  $\rmf{Hom}_\ratint\big(\rmf{Div}_0\lt(\hfpl{\C}\rt), \ratnum\otimes_\ratint H\big)$.

\subsection{$p$-adic multiplicative integrals}
\label{batusekibun}
In this section, we define multiplicative integrals for $H$-valued measures on $\prjl{K}$ following \cite{Das}. Proposition \ref{ordtobatuint} in this section implies that the multiplicative integrals have more information than usual integrals.

\begin{defn}
Let $F/K$ be a complete extension contained in $\C$. Let H be a finitely generated free abelian group,  $S$ be a subset in $\prjl{K}$, $d\in {\rm Div_0}\left(\prjl{F}\setminus S\right) $ a divisor of degree $0$, 
and let $\mu\in\meas{S}{H}$ be a $H$-valued additive measure on $S$. Then we define:

\begin{eqnarray*}
\batuint{d}{\omega_\mu} & \defeq &\batuint{d}{f_d (t)} d\mu(t)   \\*[3mm]
& := & \ds\lim_{||\ywrk{U}||\rightarrow 0 }\prod_{U\in \ywrk{U}} f_d(t_U)\otimes_\ratint\mu(U)  \in F^{\times} \otimes_\ratint H  
\end{eqnarray*}
where $\ywrk{U}$ is an open compact disjoint covering of $S$,  $||\ywrk{U}||$ is the  supremum of the diameter of $U$ for $U\in \ywrk{U}$,  $f_d$ is a rational function on $\mathbb{P}^1(F)$ whose divisor is $d$ , and  $t_U$ is an element in $U$.
\end{defn}
\begin{rmk}
Because of the properties of $\mu$ in Definition \ref{H-valued measure}, the middle term of
the above formula is independent of the choice of $f_d$. So the definition is well-defined.
\end{rmk}
If $S$ is invariant under the action of a subgroup $\Gamma\subset\rmf{PGL}_2(K)$, we define the action of $\gamma\in\Gamma$ on $\mu\in\meas{S}{H}$ by \[\gamma\cdot\mu(U)=\mu(\gamma^{-1} U)\] for any open compact subset $U\subset S$. 
\begin{prop}
\label{ordtobatuint}
Let H be a finitely generated free abelian group, and let $\mu$ be an H-valued measure on $\prjl{K}$.
Let ${\rm ord}: \C \rightarrow \ratnum$ be the valuation such that $\rmf{ord}(\pi_K)=1$. Then for any $\tau_1, \tau_2 \in \hfpl{\C} $
\[ 
({\rm ord}\otimes_\ratint {\rm id}_H)\left( \batuint{[\tau_1]-[\tau_2]}{\omega_\mu} \right)= \mu\big([\tau_1]-[\tau_2]\big),
\]
where $\mu$ is regarded as an element in $\rmf{Hom}_\ratint\big(\rmf{Div}_0\lt(\hfpl{\C}\big),H\rt)$ as in Section \ref{psinmeasure} (see the equality (\ref{Tr}) and the end of Section \ref{psinmeasure}).
\end{prop}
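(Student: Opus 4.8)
The plan is to apply $\ord\otimes_\ratint\rmf{id}_H$ directly to the defining limit of $\batuint{[\tau_1]-[\tau_2]}{\omega_\mu}$. Taking $f_d(t)=\frac{t-\tau_1}{t-\tau_2}$ as the rational function with divisor $d=[\tau_1]-[\tau_2]$, and using that $\ord\colon\C^\times\to\ratnum$ is a group homomorphism, the left-hand side becomes $\ds\lim_{||\ywrk{U}||\to 0}\sum_{U\in\ywrk{U}}\ord\lt(f_d(t_U)\rt)\,\mu(U)$. Thus everything reduces to understanding the function $t\mapsto\ord(f_d(t))$ on $\prjl{K}$ in terms of the tree, and to matching its integral against $\mu$ with the combinatorial pairing $\mu([\tau_1]-[\tau_2])$ from Section \ref{psinmeasure} (the identification (\ref{Tr}), under which $\phi(e)=\mu(U_e)$, together with the embedding into $\rmf{Hom}_\ratint(\rmf{Div}_0(\hfpl{\C}),\ratnum\otimes_\ratint H)$).

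The key local input is a capped-valuation formula. For $\tau\in\hfpl{\C}$, the reduction $\red_K(\tau)$ is the retraction of the end $\tau$ to $\ywrk{T}_K$, i.e.\ the smallest $K$-rational disk $D(a,\rho)$ with $a\in K$ and $\rho=p^{-c}\in p^\ratnum$ containing $\tau$. Since $|t-\tau|=\max(|t-a|,\rho)$ for every $t\in K$ by the ultrametric inequality, one obtains
\[
\ord(t-\tau)=\min\lt(\ord(t-a),\,c\rt),\qquad t\in K,
\]
where $c$ is precisely the level of $\red_K(\tau)$ in the metric $d_\ywrk{T}$. I would first record this identity, as it is the bridge between the $p$-adic valuation and the tree distance.

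Next I would treat the local case in which $\red_K(\tau_1)$ and $\red_K(\tau_2)$ lie on a single oriented edge $e_0$ of $\ywrk{T}_K$, realized as concentric disks $D(a,\rho_1)\subset D(a,\rho_2)$ with common center $a\in K$ and levels $c_1>c_2$ in the closed unit segment of $e_0$, so that $\ell\defeq d_\ywrk{T}(\red_K(\tau_2),\red_K(\tau_1))=c_1-c_2$ and the segment points in the direction of $e_0$ (toward the smaller disk $t_{e_0}$). Subtracting two instances of the formula with the common center $a$ gives
\[
\ord\lt(f_d(t)\rt)=\min\lt(\ord(t-a),c_1\rt)-\min\lt(\ord(t-a),c_2\rt)=\begin{cases}\ell,& t\in U_{e_0},\\ 0,& t\notin U_{e_0},\end{cases}
\]
because $t\in U_{e_0}$ is exactly $\ord(t-a)\ge c_s+1$ and $t\notin U_{e_0}$ is $\ord(t-a)\le c_s$, where $c_s\in\ratint$ is the level of $s_{e_0}$; in the first range both minima are capped and contribute $c_1-c_2=\ell$, in the second both are uncapped and cancel. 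Hence $\ord\circ f_d=\ell\cdot\mathbf{1}_{U_{e_0}}$ is locally constant, so for any covering $\ywrk{U}$ refining $\{U_{e_0},\prjl{K}\setminus U_{e_0}\}$ the sum equals $\ell\sum_{U\subseteq U_{e_0}}\mu(U)=\ell\,\mu(U_{e_0})$ by finite additivity of $\mu$; the limit is attained once $||\ywrk{U}||$ is small. Since the embedding in Section \ref{psinmeasure} gives $\mu([\tau_1]-[\tau_2])=\ell\,\mu(U_{e_0})$ as well, the identity holds here. The general case then follows by additivity: writing $\red_K(\tau_2)=w_0,\dots,w_m=\red_K(\tau_1)$ for the vertices and endpoints along the geodesic and choosing $\sigma_i\in\hfpl{\C}$ with $\red_K(\sigma_i)=w_i$ (and $\sigma_0=\tau_2$, $\sigma_m=\tau_1$), each consecutive pair reduces to a common edge. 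As $f_{[\tau_1]-[\tau_2]}=\prod_i f_{[\sigma_i]-[\sigma_{i-1}]}$ turns the multiplicative integral into a sum in $\C^\times\otimes_\ratint H$ and $\mu(\cdot)$ is additive on $\rmf{Div}_0$, summing the local identities yields $\sum_i\ell_i\,\mu(U_{e^{(i)}})=\mu([\tau_1]-[\tau_2])$.

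The main obstacle is the local computation: establishing the capped-valuation formula and then matching, with correct orientations and the metric $d_\ywrk{T}$, the three combinatorial data---the open set $U_{e_0}$, the oriented edge $e_0$, and the length $\ell$---against the analytic quantity $\ord\circ f_d$. Once this dictionary is fixed, the concatenation argument and the passage to the limit are formal, the latter being immediate since $\ord\circ f_d$ is locally constant on the compact space $\prjl{K}$.
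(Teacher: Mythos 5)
Your proposal is correct and follows essentially the same route as the paper's proof: reduce by subdivision to the case where both reductions lie on a single edge, show that $\ord\circ f_d$ is the locally constant function taking the value $d_\ywrk{T}(\red_K(\tau_2),\red_K(\tau_1))$ on $U_{e}$ and $0$ elsewhere, and match this against the combinatorial extension of $\mu$ to $\rmf{Div}_0\lt(\hfpl{\C}\rt)$. Your capped-valuation formula via smallest $K$-rational disks is just a coordinate-free restatement of the paper's explicit expansion $\tau_i=\pi_L^{-a_i}u_i+x_i$ and the resulting case analysis of $\ord(x-\tau_i)$.
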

\begin{proof}
We may assume $\tau_1, \tau_2 \in \hfpl{L}$ for some finite extension $L/K$. We denote $e_{L/K}$ the ramified index of $L/K$.  By partition the section from $\tau_2$ to $\tau_1$ and by replacing $\mu$ with $\gamma\mu$ for suitable $\gamma \in \rmf{PGL}_2(K)$, we may also assume both $\red_K(\tau_1)$ and $\red_K(\tau_2)$ lie in the edge $e^*$.

The elements $\tau_i$ have the following expansion:
\[\tau_i=\pi_L^{-a_i}u_i+x_i\]
where $u\in \intring{L}^\times$ such that if $a_i=0$, the image in $k_L$ is not contained in $k_K$, $x_i \in \intring{L}$ and $0 \le a_i < e_{L/K}$.

Then we have
\[
\ord(x-\tau_i)=
\begin{cases}
\ord(x) & (x\in \prjl{K}\setminus\intring{K}
) \\
\ds-\frac{a_i}{e_{L/K}} & (x\in \intring{K})   
\end{cases}
\]     
Thus
\[
\ord\lt(\frac{x-\tau_1}{x-\tau_2}\rt)=
\begin{cases}
0 & (x\in \prjl{K}\setminus\intring{K}
) \\
\ds\frac{a_2}{e_{L/K}}-\frac{a_1}{e_{L/K}} & (x\in \intring{K})   
\end{cases}
\]     
Therefore
\[({\rm ord}\otimes_\ratint {\rm id}_H)\left( \batuint{[\tau_1]-[\tau_2]}{\omega_\mu} \right)=\lt(\frac{a_2}{e_{L/K}}-\frac{a_1}{e_{L/K}}\rt)\mu(\intring{K})\]
but the right hand side is $\mu([\tau_1]-[\tau_2])$

\qed\end{proof}

\subsection{The Manin-Drinfeld theorem}
\label{The Manin-drinfeld theorem}
Let $\Gamma \subset {\rm PGL}_2(K)$ be a discrete finitely generated subgroup without torsion, and let $\ywrk{L}_\Gamma$ be the set of the $Q \in \prjl{K}$ such that there exists $P \in \mathbb{ P}^1(K)$ and a sequence $\{\gamma_n\}_{n\in \bfw{N}}\subset \Gamma$ consisting of distinct elements such that $\gamma_n P \rightarrow Q\ (n \rightarrow \infty)$.
\begin{defn}
The Bruhat-Tits tree of $\Gamma$ is the subtree $\ywrk{T}_{\Gamma} \subset \ywrk{T}_K$ defined by
\[ 
\ywrk{T}_{\Gamma} \defeq \bigcup_{\substack{\ywrk{L}_\Gamma\cap U_e \neq\emptyset \\ \ywrk{L}_\Gamma\setminus U_{e}\neq\emptyset}}e 
\]
\end{defn}

Denote by $\meas{\ywrk{L}_\Gamma}{H}^\Gamma$ the $\Gamma$-invariant part of $\meas{\ywrk{L}_\Gamma}{H}$. Then we have:
\begin{eqnarray*}
\meas{\ywrk{L}_\Gamma}{H}^\Gamma & = &{\rm Ker}\left({\rm Hom}_\ratint\left(E_{\ywrk{T}_\Gamma}, H\right)\overset{\rm Tr^*}{\rightarrow}{\rm Hom}_\ratint\left(V_{\ywrk{T}_\Gamma}, H\right)\right)^\Gamma \nonumber \\
& = & {\rm Ker}\left({\rm Hom}_\ratint\left(\left(E_{\ywrk{T}_\Gamma}\right)_\Gamma, H\right)\overset{\rm Tr^*}{\rightarrow}{\rm Hom}_\ratint\lt(\left(V_{\ywrk{T}_\Gamma}\right)_\Gamma, H\right)\rt) \\ \label{measurenosiki}
&=& \rmf{Hom}_\ratint\big( \rmf{Coker}\left(\rmf{Tr} \right)_\Gamma, H \big), \label{caniso}
\end{eqnarray*}
where $(V_{\ywrk{T}_\Gamma})_\Gamma$, $(E_{\ywrk{T}_\Gamma})_\Gamma$ are the maximal $\Gamma$-invariant quotient of $V_{\ywrk{T}_\Gamma}$, $E_{\ywrk{T}_\Gamma}$ and they are equal to the corresponding abelian groups of the graph $\Gamma\backslash\ywrk{T}_\Gamma$.

\begin{rmk}
\label{batuintnohuhensei}
If $\mu$ is $\Gamma$-invariant, $\batuint{\gamma d}{\omega_\mu}=\batuint{d}{\omega_\mu} \mbox{\rm for any }\gamma \in \Gamma$.
\end{rmk}

It is known that the the quotient $\Gamma\backslash\ywrk{T}_\Gamma$ is a finite graph, and $\rmf{Coker}(\rmf{Tr})_\Gamma$ is isomorphic to $H^1(\Gamma, \ratint)$ (see \cite{Das}, section 2.3).

We have $\mu\in\meas{\ywrk{L}_\Gamma}{H^1\left(\Gamma, \ratint\right)}$ corresponding to \[{\rm id}_{H^1\left(\Gamma, \ratint\right)} \in {\rm Hom}\left(H^1\left(\Gamma, \ratint\right), H^1\left(\Gamma, \ratint\right)\right).\] This measure $\mu$ can be described explicitly:  We fix a vertex $v \in \ywrk{T}_\Gamma$.  Let $e_1, e_2, \dots e_n$ be a set of edges of $\Gamma\bbs\ywrk{T}_\Gamma$ (we fix orients of them). For any $\gamma \in \Gamma$, let $e_\gamma$ be an element in $E_{\ywrk{T}_\Gamma}$ such that $\partial(e_\gamma)=\gamma v-v$.   Write $e_\gamma=m_1 e_1+\dots +m_n e_n$ and $e_\gamma^*:= m_1 e_1^* + \dots +m_n e_n^*$ where $e_j^*$ is the dual of $e_j$.  Then $\mu$ is as follows:
\[
\mu(U_e)(\gamma)=e_\gamma^*(e)
\]

Roughly speaking, the value of $\mu(U_e)(\gamma)$ is the number including orient of $e$ lying in the path from $v$ to $\gamma v$ modulo $\Gamma$.

\begin{thm}[Mumford]
\label{Mumford's theorem}
Let $X$ be a Mumford curve over $K$ ({\em i.e.} stable reduction of $X$ contains only rational curves that intersect at normal crossing over $k_K$). Then there exists a subgroup $\Gamma\subset{\rm PGL}_2(K)$ and an $\rmf{Aut}(\C/K)$-equivariant rigid analytic isomorphism:
\[
X(\C) \cong \Gamma\backslash\padhfpl{\C}.
\]
Moreover $\Gamma$ is discrete, free of rank $g$ ($g$ is the genus of $X$) and unique up to conjugation in ${\rm PGL}_2(K)$. 
\end{thm}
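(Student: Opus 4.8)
The plan is to pass from the curve to its stable formal model over $\intring{K}$, read off the group $\Gamma$ from the combinatorics of the totally degenerate special fiber, and then lift the topological universal covering of the dual graph to a rigid-analytic universal covering realized as an open subset of $\prjl{\C}$. Carrying the whole construction out over $\intring{K}$ is what will make both the $K$-rationality of $\Gamma$ and the $\rmf{Aut}(\C/K)$-equivariance of the isomorphism transparent at the end.

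First I would fix the stable model $\mathfrak{X}$ of $X$ over $\intring{K}$ --- it exists with totally degenerate special fiber by the Mumford-curve hypothesis --- and form the dual graph $G$ of its special fiber $\uesen{X}$. Since $\uesen{X}$ is a union of copies of $\bfw{P}^1$ over $k_K$ meeting transversally, every component has genus $0$, so the arithmetic genus of $\uesen{X}$ equals the first Betti number $b_1(G)$; as $X$ has genus $g$ this forces $b_1(G)=g$. The fundamental group of a graph being free of rank equal to its first Betti number, the universal covering $\widetilde{G}\to G$ is a tree whose deck transformation group $\Gamma_0:=\pi_1(G)$ is free of rank $g$. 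The group $\Gamma$ I am after will be a realization of $\Gamma_0$ inside $\rmf{PGL}_2(K)$, and the quotient graph $\Gamma\bbs\ywrk{T}_\Gamma$ should recover $G$.

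The heart of the argument is to lift this covering to the rigid-analytic category. I would build a $\Gamma_0$-equivariant formal scheme $\widehat{\Omega}$ over $\intring{K}$ whose special fiber is the ``tree of $\bfw{P}^1$'s'' obtained by pulling back $\uesen{X}$ along $\widetilde{G}\to G$, and take its rigid-analytic generic fiber $\Omega$ in the sense of Raynaud. Each standard building block --- a component $\bfw{P}^1$ with finitely many residue disks deleted at its nodes --- embeds into $\prjl{\C}$, and the content of Mumford's construction is that these embeddings can be glued compatibly along the infinite tree $\widetilde{G}$, so that $\Omega$ is realized as an increasing union of affinoids inside a single $\prjl{\C}$ with the deck transformations acting by M\"obius transformations. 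This yields a homomorphism $\Gamma_0\hookrightarrow\rmf{PGL}_2(\C)$ with image $\Gamma$; since the whole picture is defined over $\intring{K}$ the matrix entries are $K$-rational, whence $\Gamma\subset\rmf{PGL}_2(K)$. One then verifies that every nontrivial element of $\Gamma$ is hyperbolic and that $\Gamma$ is discrete and free of rank $g$, i.e.\ a Schottky group; its domain of discontinuity $\prjl{\C}\setminus\ywrk{L}_\Gamma=\padhfpl{\C}$ coincides with $\Omega$, and matching the quotient formal model $\Gamma_0\bbs\widehat{\Omega}$ with $\mathfrak{X}$ delivers the isomorphism $X(\C)\cong\Gamma\bbs\padhfpl{\C}$.

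I expect the gluing-and-embedding step to be the main obstacle: abstractly $\Gamma_0$ is merely a free group, and the real work is to produce from it an honest, convergent action on $\prjl{\C}$ whose limit set $\ywrk{L}_\Gamma$ is the correct compact subset of $\prjl{K}$ and whose quotient tree $\Gamma\bbs\ywrk{T}_\Gamma$ is $G$. This is exactly where Raynaud's equivalence between admissible formal models and rigid spaces, together with precise control of the reduction map $\rmf{red}_K$ to the Bruhat-Tits tree $\ywrk{T}_K$, is indispensable. The remaining assertions are comparatively formal: two uniformizations of $X$ induce isomorphic quotient graphs and limit sets that agree up to a M\"obius transformation, and matching the two actions produces a conjugating element, which can be taken in $\rmf{PGL}_2(K)$ by the same rationality, giving uniqueness up to conjugation; and $\rmf{Aut}(\C/K)$-equivariance is automatic, since with the stable model, dual graph and building blocks all defined over $\intring{K}$, the Galois group acts compatibly on both sides through its action on $\C$-points.
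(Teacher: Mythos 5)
The paper does not actually prove this statement: it is quoted as Mumford's theorem, with the standard references (Mumford's original construction, and Gerritzen--van der Put \cite{GvdP}) implicitly standing behind it, so there is no in-text argument to compare yours against. Your outline is the standard proof of the hard (converse) direction and its skeleton is sound: totally degenerate stable reduction gives a dual graph $G$ with $b_1(G)=g$ because all components are rational; $\pi_1(G)$ is free of rank $g$ and acts on the tree $\widetilde{G}$; unfolding the formal stable model along $\widetilde{G}\to G$ and contracting finite subtrees of $\bfw{P}^1$'s over $\intring{K}$ realizes the generic fiber of the unfolded formal scheme as an increasing union of affinoids in a single $\prjl{\C}$, with deck transformations extending to M\"obius transformations; freeness of the action on the tree gives hyperbolicity of nontrivial elements and hence a Schottky group, and uniqueness plus $\rmf{Aut}(\C/K)$-equivariance follow from uniqueness of the stable model and $K$-rationality of the whole construction.

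Two caveats. First, the entire difficulty of the theorem is concentrated in the sentence you yourself flag: that the infinite tree of $\bfw{P}^1$'s glues into an admissible open of one fixed $\prjl{\C}$ and that the abstract deck group becomes an honest convergent M\"obius action with the right limit set. As written you invoke ``the content of Mumford's construction'' at exactly this point, which is circular as a proof; what is needed is the inductive contraction argument (a finite tree of $\bfw{P}^1$'s over $\intring{K}$ meeting at split ordinary double points contracts to $\bfw{P}^1_{\intring{K}}$, so its generic fiber is an affinoid subdomain of $\prjl{K}$, and these are nested compatibly), together with the identification of the complement of the union with the set of limit points of $\Gamma$-orbits. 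Second, to land in $\rmf{PGL}_2(K)$ rather than $\rmf{PGL}_2$ of an unramified extension you need the degenerate reduction to be \emph{split} over $k_K$ (all components and all nodes rational over $k_K$); the paper's parenthetical definition of a Mumford curve is phrased so that this holds, but your appeal to ``defined over $\intring{K}$'' should make explicit that it is this splitness that puts the contractions, and hence the matrix entries, over $\intring{K}$.
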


\begin{rmk}
For the Mumford curves appearing in the $p$-adic uniformization of Shimura curves, we have $\ywrk{L}_\Gamma=\mathbb{P}^1(K)$.
\end{rmk}

Let $\Gamma$ be as in Theorem \ref{Mumford's theorem}. Then $H\defeq\rmf{Coker}(\rmf{Tr})_\Gamma$ is finitely generated free abelian group.  Let $\mu$ be  the universal $\Gamma$-invariant measure in $\meas{\ywrk{L}_\Gamma}{H}^\Gamma$,  corresponding to $\rmf{id} \in \rmf{End}_\ratint(H)$.

At first, we construct a $\ratint$-lattice $\Lambda$ in $\C^\times \otimes_\ratint H$.  By the following exact sequence:

\[
\xymatrix{
0 \ar[r] & {\rm Div}_0\left(\padhfpl{\C}\right) \ar[r] & {\rm Div}\left(\padhfpl{\C}\right)\ar[r] & \ratint \ar[r] & 0,\\
}
\]
we have a homomorphism \[\delta:H_1(\Gamma, \ratint) \rightarrow {\rm Div}_0\lt(\padhfpl{\C}\right)_\Gamma\] by the long exact sequence of the group homology (we denote $H_0(\Gamma,\ \cdot\ )$ by $(\ \cdot\ )_\Gamma$).  Since there is no stabilizers for an element of $\hfpl{\C}$ (see \cite{GvdP}, p.7, Proposition (1.6.4)), we see that $H_1\left(\Gamma, {\rm Div}\left(\padhfpl{\C}\right)\right)=0$. Thus $\delta$ is injective.

On the other hand, the map
\[{\rm Div}_0\left(\padhfpl{\C}\right)_\Gamma\ni d \mapsto \batuint{d}{\omega_\mu} \in \C^\times \otimes H\]
is a well-defined homomorphism (see Remark \ref{batuintnohuhensei}).  Therefore we define the lattice $\Lambda$ by the image of $\batuint{\ }{\omega_\mu}\circ \delta$.  Now we state the following theorem of Manin-Drinfeld describing the $p$-adic uniformization of the Jacobian variety of a Mumford curve.

\begin{thm}[Manin-Drinfeld]
\label{manindrinfeld}
The morphism
\[
\begin{array}{ccc}
J(X)(\C) & \rightarrow & (\C^\times \otimes H) / \Lambda \\
\rotatebox{90}{$\in$}  &  & \rotatebox{90}{$\in$} \\
\tilde{x}-\tilde{y} & \mapsto & \batuint{\tilde{x}-\tilde{y}}{\omega_\mu} \\
\end{array}
\]
is an ${\rm Aut}(\C/K)$-equivariant rigid analytic isomorphism.
\end{thm}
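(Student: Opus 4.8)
The plan is to factor the stated map through $\Gamma$-coinvariant divisors on the upper half plane and to reduce everything to a computation with theta functions. First I would set up the divisor-theoretic dictionary. By Theorem \ref{Mumford's theorem} the uniformization $X(\C)\cong\Gamma\backslash\hfpl{\C}$ identifies $\rmf{Div}(X)(\C)$ with the coinvariants $\rmf{Div}\lt(\hfpl{\C}\rt)_\Gamma$ (points of $X$ being $\Gamma$-orbits of points of $\hfpl{\C}$), and $\rmf{Div}_0(X)(\C)$ with the kernel of the degree map on these coinvariants. Applying $\Gamma$-homology to $0\to\rmf{Div}_0\lt(\hfpl{\C}\rt)\to\rmf{Div}\lt(\hfpl{\C}\rt)\to\ratint\to 0$ and using the vanishing $H_1\lt(\Gamma,\rmf{Div}\lt(\hfpl{\C}\rt)\rt)=0$ already recorded before the statement (no element of $\hfpl{\C}$ has a stabilizer, cf.\ \cite{GvdP}), I obtain the short exact sequence
\[0\longrightarrow H_1(\Gamma,\ratint)\overset{\delta}{\longrightarrow}\rmf{Div}_0\lt(\hfpl{\C}\rt)_\Gamma\longrightarrow\rmf{Div}_0(X)(\C)\longrightarrow 0,\]
so that $\rmf{Div}_0(X)(\C)\cong\rmf{Div}_0\lt(\hfpl{\C}\rt)_\Gamma\big/\delta\lt(H_1(\Gamma,\ratint)\rt)$.

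Next I would push the universal measure through this identification. The multiplicative integral $d\mapsto\batuint{d}{\omega_\mu}$ is an additive homomorphism $\rmf{Div}_0\lt(\hfpl{\C}\rt)\to\C^\times\otimes_\ratint H$ that is $\Gamma$-invariant by Remark \ref{batuintnohuhensei}, hence descends to the coinvariants; and by the very definition of $\Lambda$ as the image of $\batuint{\ }{\omega_\mu}\circ\delta$, it carries $\delta\lt(H_1(\Gamma,\ratint)\rt)$ into $\Lambda$. Combined with the previous paragraph this produces a well-defined homomorphism $\rmf{Div}_0(X)(\C)\to(\C^\times\otimes H)/\Lambda$, and it remains only to check that it descends to $J(X)(\C)$ and is a bijective analytic isomorphism.

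The crux, and what I expect to be the main obstacle, is the theta-function computation that simultaneously gives well-definedness on $J(X)$ and injectivity. Using $H=\rmf{Coker}(\rmf{Tr})_\Gamma\cong H^1(\Gamma,\ratint)$ I identify $\C^\times\otimes H\cong\homm_\ratint(H_1(\Gamma,\ratint),\C^\times)$, so a multiplicative integral is read off as a homomorphism on $H_1(\Gamma,\ratint)$. The key identity to establish is that, for $d\in\rmf{Div}_0\lt(\hfpl{\C}\rt)$, the element $\batuint{d}{\omega_\mu}$ coincides with the system of automorphy factors $\gamma\mapsto\Theta_d(\gamma\tau)/\Theta_d(\tau)$ of the $\Gamma$-theta function $\Theta_d$ whose divisor is the $\Gamma$-orbit of $d$; this is precisely the compatibility between the explicit description of $\mu$ recorded above and the infinite products defining $\Theta_d$. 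Granting this, a class $\bar d\in\rmf{Div}_0(X)(\C)$ is principal exactly when $\Theta_d$ can be adjusted by a period theta function to become genuinely $\Gamma$-invariant, i.e.\ when its automorphy factors $\batuint{d}{\omega_\mu}$ lie in $\Lambda$. Thus the kernel of the map is exactly the principal classes: the map kills principal divisors, so it factors through $J(X)(\C)$, and the induced homomorphism is injective.

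It remains to see that the target is a genuine rigid analytic torus and that the injection is onto, analytic, and Galois-equivariant. Since $H$ is free of rank $g$, $\C^\times\otimes H\cong(\C^\times)^g$; that $\Lambda$ is discrete of full rank $g$ I would deduce from Proposition \ref{ordtobatuint}, which shows that composing the periods with $\rmf{ord}\otimes_\ratint\rmf{id}_H$ recovers the non-degenerate monodromy pairing on $H_1(\Gamma,\ratint)$, so $(\C^\times\otimes H)/\Lambda$ is an abelian variety of dimension $g=\dim J(X)$. The Abel--Jacobi map $x\mapsto\batuint{[x]-[x_0]}{\omega_\mu}$ is given locally by the convergent products defining the $\Theta_d$, hence is rigid analytic, so the induced map $J(X)(\C)\to(\C^\times\otimes H)/\Lambda$ is a rigid analytic monomorphism of connected rigid groups of equal dimension $g$; its image is then a closed analytic subgroup of full dimension in the connected target, forcing surjectivity and hence an isomorphism of rigid analytic groups. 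Finally, $\rmf{Aut}(\C/K)$-equivariance follows because $\Gamma\subset\rmf{PGL}_2(K)$, the measure $\mu$ and the underlying tree data are defined over $K$, and the multiplicative integral, an $H$-weighted limit of products of cross-ratios with $H$ fixed, commutes with the action of $\rmf{Aut}(\C/K)$ on $\C$-points; together with the equivariance of Mumford's uniformization (Theorem \ref{Mumford's theorem}) this yields the claim.
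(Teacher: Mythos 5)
The paper offers no argument of its own for this theorem --- its proof is the single line ``See \cite{Das}, Theorem 2.5'' --- and your outline is a correct sketch of precisely the argument that the cited proof carries out: the homological identification of $\rmf{Div}_0(X)(\C)$ as $\rmf{Div}_0\lt(\hfpl{\C}\rt)_\Gamma\big/\delta\lt(H_1(\Gamma,\ratint)\rt)$, the descent of $\batuint{\ }{\omega_\mu}$ to coinvariants, the identification of multiplicative integrals with automorphy factors of theta functions (the crux, which you correctly isolate but leave unverified), and the monodromy pairing for discreteness of $\Lambda$ and for the injectivity-plus-equal-dimension surjectivity argument. So your proposal is correct and takes essentially the same route as the source to which the paper defers.
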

\begin{proof}
See \cite{Das}, Theorem 2.5. \qed
\end{proof}

\subsection{$L$-invariants}
\label{L-invariants}
In this section, we construct $L$-invariants associated to a homomorphism $\phi:\C^\times \rightarrow \C$ following \cite{Das}. Then for each $\phi$, we define a map from  $J(X)(\C)$ to $\homm\left(\meas{\ywrk{L}_\Gamma}{\C}^\Gamma, \C\right)$ and determines the value in $\C$ for a $\Gamma$-invariant $\C$-valued measure.

We rewrite the statement of Theorem \ref{manindrinfeld} as the following exact sequence:

\[
\xymatrix{
0 \ar[r] &H_1(\Gamma, \ratint) \ar@{}[d]|{\rotatebox{90}{$\cong$}} \ar[r]^(0.4)j&
\C^\times \otimes \rmf{Coker}(\rmf{Tr})_\Gamma \ar[r] \ar@{}[d]|{\rotatebox{90}{$\cong$}} & J(X)(\C) \ar[r] & 0 \\
& \Gamma^{ab} & \homm_\ratint\left(\Gamma^{ab}, \C^\times\right)& \\
& \gamma \ar@{}[u]|{\rotatebox{90}{$\in$}} \ar@{|->}[r] & \left[ \gamma' \mapsto \batuint{\gamma v-v}{\omega_\mu}(\gamma')\right] \ar@{}[u]|{\rotatebox{90}{$\in$}} & \\
}
\]

Fix a homomorphism $\phi:\C^\times \rightarrow \C$.  Let ${\rm ord}:\C^\times \rightarrow \ratnum\subset\C$ be the valuation map with ${\rm ord}(\pi_K)=1$.  From the explicit description of $\mu$ (see the end of Section \ref{psinmeasure}) and Proposition \ref{ordtobatuint}, the pairing:
\[
\xymatrix{
\Gamma^{ab} \times \Gamma^{ab} \ar[r] & \C \\
(\gamma, \gamma') \ar@{}[u]|{\mws{$\in$}} \ar@{|->}[r] & {\rm ord}\left( \batuint{\gamma v-v}{\omega_\mu}(\gamma')\right) \ar@{}[u]|{\mws{$\in$}}\\
} 
\]
is symmetric and non-degenerate.  Therefore the composition of $j$ in the above diagram and  $\ord\otimes_\ratint \rmf{id}$ induces an isomorphism\[H_1(\Gamma, \C)\overset{\cong}{\longrightarrow}\C\otimes_\ratnum\rmf{Coker}(\rmf{Tr})_\Gamma.\]  The map $(\phi\otimes_\ratint \rmf{id}) \circ j$ induces a similar homomorphism.  
\begin{defn}
For any homomorphism $\phi:\C^\times \rightarrow \C$, the {\em $L$-invariant} is a unique endomorphism:
\[\mathscr{L}_\phi\in {\rm End}\left(\C\otimes_\ratint\rmf{Coker}(\rmf{Tr})_\Gamma\right)\] such that:
\[
\lt(\phi\otimes_\ratint \rmf{id} - \mathscr{L}_\phi\circ (\ord\otimes_\ratint \rmf{id}) \rt)\circ j =0.
\]
\end{defn}
From the above exact sequence, we define a well-defined homomorphism $\phi^X$:
\[\mapdiag{\phi^X:}{J(X)(\C)}{\C\otimes_\ratint \rmf{Coker}(\rmf{Tr})_\Gamma\cong\rmf{Hom}_\ratint\left(\rmf{Coker}\left(\rmf{Tr}\right), \C\right)^*}{P}{\phi\otimes_\ratint \rmf{id}(\nami{P}) - \mathscr{L}_\phi\circ (\ord\otimes_\ratint \rmf{id})(\nami{P}),}\] where $\nami{P}$ is a lift in $\C^\times\otimes_\ratint\rmf{Coker}(\rmf{Tr})_\Gamma$ and * means the $\C$-linear dual. 

The abelian group $\rmf{Hom}_\ratint\left(\rmf{Coker}_\Gamma\left(\rmf{Tr}\right), \C\right)$ is isomorphic to $\meas{\ywrk{L}_\Gamma}{\C}^\Gamma$ (see the equality (\ref{caniso}) in Section \ref{The Manin-drinfeld theorem}).  Thus for any $P\in J(X)$, $\phi^X(P)$ gives values in $\C$ for each $\Gamma$-invariant $\C$-valued measure.

\section{Automorphic forms on definite quaternion algebras and Hida families}
\label{Quaternionic automorphic forms}
\subsection{Basic definitions}
\label{Basic definitions}

For any algebraic group $G$ over $F$, we denote by $G(\adele_F)$ and $G(\adele_{F,f})$ the $\adele_F$ and $\adele_{F,f}$-valued points respectively. For any $x\in G(\adele_F)$, we denote by $x_v\in G(F_v)$ the $v$-component of $x$ for any place $v$ of $F$ and for any subgroup $H=\prod H_v \subset G(\adele_F)$ we denote $H^v$ the subset in $H$ which consists of $h\in H$ such that $h_v=1$.


Let $F$ be a totally real field and $\doi{p}$ the unique prime ideal above the odd prime number $p$ as in Notation \ref{notation}  Let $B$ be a definite quaternion algebra over $F$, which is ramified at all archimedean places. Let $\doi{n}^-$ be the product of the finite prime ideal where $B$ ramified at. We assume that $B$ is split at $\doi{p}$. We denote by $\yama{B}^\times$ the group $B^\times(\adele_{F,f})$, which is the group of finite ad\`elic points of $B^\times$. 

Let $\doi{a}$ be a non-zero ideal of $\intring{F}$ which is relatively prime to $\doi{n}^-$.  For any prime $\doi{l}$, let $R(\doi{a})_\doi{l}\subset B_\doi{l}:=B\otimes_F F_\doi{l}$ be as follows:
\[
R(\doi{a})_\doi{l}\defeq 
\begin{cases}
\mbox{the unique maximal order of }B_\doi{l} & \mbox{if } \doi{l}\mid\doi{n}^- \\
\mbox{an Eichler order of level } \doi{a}\intring{F_\doi{l}} & \mbox{if } \doi{l}\nmid\doi{n}^- . 
\end{cases}
\]

Let $\widehat{R}(\doi{a}) \defeq \ds\prod_\doi{l} R(\doi{a})_\doi{l}\subset B\otimes\adele_{F,f}$, and $R(\doi{a}) \defeq B \cap \widehat{R}(\doi{a})$, which is called an Eichler order of $B$ of level $\doi{a}$.

For each prime $\doi{l} \nmid \doi{n}^-$, we fix an isomorphism of $F_\doi{l}$-algebras
\[
\iota_\doi{l}:B_\doi{l}=B\otimes_F F_\doi{l} \overset{\cong}{\longrightarrow} M_2(F_\doi{l})
\]
(for any ring $A$, $\rmf{M}_2(A)$ is the ring of $2\times 2$ matrices with entries in $A$).  The map $\iota_{\doi{l}}$ induces an isomorphism between ${B_\doi{l}}^\times$ and ${\rm GL}_2(F_\doi{l})$.  By exchanging $\iota_\doi{l}$ for its conjugation,  we may assume that the image of $R(\doi{a})_\doi{l}$ is \[\left\{\gyoretu{a}{b}{c}{d} \in \rmf{M}_2\left(\intring{F_\doi{l}}\right) \relmiddle| c\equiv 0 \mod  \doi{a}\intring{F_\doi{l}}\right\}.\]

\begin{defn}
\label{leveldayo}
For any non-zero ideal $\doi{a}$ of $\intring{F}$ which is relatively prime to $\doi{n}^-$.  Then we put: 

\begin{eqnarray*}
\Sigma_0(\doi{a}, \doi{n}^-) &\defeq & \widehat{R}(\doi{a})^\times \\
\Sigma_1(\doi{a}, \doi{n}^-)& \defeq & \left\{ u \in \Sigma_0(\doi{a},\doi{n}^-) \relmiddle| \iota_\doi{l}(u) \equiv \gyoretu{1}{*}{0}{*} \mod  \doi{a}\rmf{M}_2\left(\intring{F_\doi{l}}\right)\mbox{\rm\  for }\doi{l}\nmid \doi{n}^-\right\} \\
\Delta_0\left(\doi{a},\doi{n}^-\right)&\defeq &\yama{R}(\doi{a}) \\
\Delta_1\left(\doi{a},\doi{n}^-\right) & \defeq & \left\{ x \in \Delta_0(\doi{a},\doi{n}^-) \relmiddle| \iota_\doi{l}(u) \equiv \gyoretu{1}{*}{0}{*} \mod  \doi{a}\rmf{M}_2\left(\intring{F_\doi{l}}\right)\mbox{\rm\  for }\doi{l}\nmid \doi{n}^-\right\}
\end{eqnarray*}
\end{defn}

\begin{defn}
\label{hokeikeisiki}
Let $\Sigma$ be an open compact subgroup of $\yama{B}^\times$ and let $M$ be a $\ratint_p$ -module with $\iota_\pee(\Sigma_\pee)$ action.  An {\em $M$-valued automorphic form on $\yama{B}^\times$ of level $\Sigma$} is a function
\[ \Phi:\yama{B}^\times \longrightarrow M \]
such that
\[ \Phi(\gamma b u)=\iota_\pee(u_\pee)^{-1}\cdot\Phi(b) \]
for all $\gamma \in B^\times, b\in\yama{B}^\times, u\in\Sigma$.  We denote by $S(\Sigma, M)$ the {\em space of $M$-valued automorphic forms on $\yama{B}^\times$ of level $\Sigma$}.
\end{defn}

\begin{rmk}
Since $\dblcoset{$B^\times$}{$\yama{B}^\times$}{$\Sigma$}$ is a finite set, $\Phi$ is determined by its values on a finite set of representatives of the double coset space.   
\end{rmk}

\begin{defn}
\label{saijitakousiki}
For each embedding $\sigma:F_p \rightarrow \uesen{\ratnum}_p$, and any $n\ge0$, let ${\rm Sym}^{n}$ be the $\cpxnum_p$-vector space of homogeneous polynomials of degree $n$ in the indeterminates $X_\sigma, Y_\sigma$ with coefficients in $\cpxnum_p$.  We put \[
\mathcal{B}_{n} \defeq \bigotimes_{\sigma:F_p \rightarrow \uesen{\ratnum}_p} {\rm Sym}^{n_\sigma}.\]
We define a right action of $\rmf{GL}_2(F_p)$ on $\ywrk{B}_{n}$: 
\[
\bigotimes_\sigma P^\sigma\lt(X_\sigma, Y_\sigma\rt)|\gamma \defeq \bigotimes_\sigma P^\sigma\lt(a^\sigma X_\sigma+b^\sigma Y_\sigma, c^\sigma X_\sigma+d^\sigma Y_\sigma\rt)
\]
for $\gamma=\gyoretu{a}{b}{c}{d} \in \rmf{GL}_2(F_\pee)$ and $P^\sigma\lt(X^\sigma, Y^\sigma\rt)\in \rmf{Sym}^{n}$.
Then we put
\[ V_{n} \defeq \rmf{Hom}_{\cpxnum_p}\lt(\mathcal{B}_{n}, \cpxnum_p\rt) \]
with the left action of $\rmf{GL}_2(F_\spp)$ induced by $\ywrk{B}_{n}$.  For any $k\ge 2$, we call $S(\Sigma, V_{k-2})$ {\em the space of classical automorphic forms on $\yama{B}^\times$ of weight $k$, and the level $\Sigma$}.
\end{defn}

We consider the following action of $\widehat{F}^\times$ on $S(\Sigma, V_{k})$ by
\[
z\cdot\Phi(b)\defeq \Phi(zb).
\]
This action factors through the infinite idele class group
\[
Z_F(\Sigma)\defeq \widehat{F}^\times / {F_+^\times(\widehat{\intring{F}}^\times \cap \Sigma)^\pee}.
\]
We have a natural surjection from $Z_F(\Sigma)$ to a finite idele class group ${\it Cl}(\Sigma)$
\[
Z_F(\Sigma)\rightarrow {\it Cl}(\Sigma) :=\widehat{F}^\times /F_+^\times(\widehat{\intring{F}}^\times \cap \Sigma)
\]
whose kernel is given by the image of $\intring{F_\pee}^\times \cap \Sigma_\spp$ in $Z_F(\Sigma)$.

Let $\chi_{F,\rmf{cycl}}$ be the restriction of the cyclotomic character to $\rmf{Gal}(\uesen{F}/F)$.
By Definition \ref{hokeikeisiki} and \ref{saijitakousiki}, the action of $\intring{F_\pee}^\times \cap \Sigma_\spp$ is given by multiplying $\chi_{F,{\rm cycl}}^{k-2}(z)$ for each $z \in \intring{F_\pee}^\times \cap \Sigma_\spp$.

\begin{defn}
For each character $\eta$ of ${\it Cl}(\Sigma)$, we define
\[
S(\Sigma, V_{k}, \eta)\defeq\left\{ \Phi \in S(\Sigma, V_{k}) \relmiddle|
\begin{aligned}
& \Phi(zb)=\chi_{F,{\rm cycl}}^{k-2}(z) \, \eta(z)^{-1}\Phi(b) \\
& \mbox{\rm for all}\ z \in \widehat{F}^\times,\,b\in \widehat{B}^\times
\end{aligned}
\right\}.
\]

\end{defn}
We have a decomposition:
\[
S(\Sigma, V_{k})=\bigoplus_\eta S(\Sigma, V_{k}, \eta),
\]
where $\eta:{\it Cl}(\Sigma)\rightarrow\cpxnum_p^\times$ runs over the characters of ${\it Cl}(\Sigma)$.  

We recall the definition of Hecke operators. 
\begin{defn}
\label{Hecke operators}
Let $\doi{n}\subset \intring{F}$ be a nonzero prime ideal which is relatively prime to $\doi{n}^-$. 
There exist two kinds of operators $T(\doi{a}), T(\doi{a},\doi{a})$ for certain non-zero ideals $\doi{a} \subset \intring{F}$ acting on the space of automorphic forms.  Let $(\Delta,\Sigma)=(\Delta_0,\Sigma_0)$ or $(\Delta_1,\Sigma_1)$.
\begin{itemize}
\item{\em (Definition of $T(\doi{a}))$:}  For any $(\doi{a},\doi{n})=1$,  given the right coset decomposition
\begin{eqnarray}
\label{migikoset}
\left\{ x \in \Delta(\doi{n}, \doi{n}^-) \relmiddle| \rmf{Nrd}_{B/F}(x)\intring{F}=\doi{a} \right\}=\bigsqcup_i \sigma_i \Sigma(\doi{n}, \doi{n}^-).
\end{eqnarray}
Let $M$ be a $\ratint_p$-module as in Definition \ref{hokeikeisiki} with the action of $\Delta(\doi{n}, \doi{n}^-)$ which is compatible with that of $\iota_\doi{l}(\Sigma(\doi{n},\doi{n}^-))$. For any $\Phi \in S(\Sigma(\doi{n},\doi{n}^-), M)$, we define
\[
\left(T(\doi{a})\Phi\right)(b) \defeq \displaystyle \sum_i \sigma_i\Phi(b\sigma_i).
\]
\item {\em (Definition of $T({\doi{a},\doi{a}})$):} For any $(\doi{a},\doi{n}\doi{n}^-)=1$, let $a\in \adele_{F, f}$ be an element such that $a\intring{F}=\doi{a}$. Let $M$ be a $\ratint_p$-module as above. For any $\Phi \in S(\Sigma(\doi{n},\doi{n}^-), M)$, we define
\[
\left(T(\doi{a},\doi{a})\Phi\right)(b) \defeq \Phi(ba).
\]


\end{itemize}
\end{defn}

\begin{rmk}
When $\doi{a}=\spp$ and $\doi{n}$ is prime to $\spp$, th Hecke operator $T(\spp)$  is described explicitly. The right coset decomposition of (\ref{migikoset}) is given as follows (see \cite{Shi1}, Proposition 3.36):
\begin{eqnarray}
&&\mbox{if $\Sigma=\Sigma_0$, }\ds\bigsqcup_{b\in \intring{F}/\doi{p}} \gyoretu{\pi_\doi{p}}{b}{0}{1}\Sigma_0(\doi{n}, \doi{n}^-)\sqcup\gyoretu{1}{0}{0}{\pi_\spp}\Sigma_0(\doi{n}, \doi{n}^-)\label{migikoset1}, \\
&&\mbox{if $\Sigma=\Sigma_1$, }\ds\bigsqcup_{c\in\intring{F}/\doi{p}} \gyoretu{1}{0}{\pi_\doi{p}c}{\pi_\doi{p}}\Sigma_1(\spp\doi{n}, \doi{n}^-) \label{migikoset2},
\end{eqnarray}
where $\pi_\doi{p}\in\intring{F_\doi{p}}$ is a uniformizer. 
\end{rmk}

\subsection{Quaternionic automorphic forms and the Bruhat-Tits tree}
\label{another description} 
Let $\Sigma:=\Sigma_0(\doi{a}, \doi{n}^-)$ for a non-zero ideal $\doi{a}$ relatively prime to $\doi{n}^-$.  By the strong approximation theorem, the reduced norm map $\rmf{Nrd}_{B/F}$ gives a bijection:
\[
\xymatrix{
 \rmf{Nrd}_{B/F} :B^\times\bbs\yama{B}^\times\big/B_\spp^\times\Sigma \ar[r]& F_+^\times\bbs\adele_{F, f}^\times\big/\yama{\intring{F}}^\times =:\nnm{Cl}_F^+ 
},
\]
where $\nnm{Cl}_F^+$ is the narrow ideal class group.  Note that since we have assumed $p$ is inert in $F$, the image of $F_\doi{p}^\times$ in $\nnm{Cl}_F^+$ is trivial.  We have a decomposition
\begin{eqnarray} 
\label{Bdblcoset}
\yama{B}^\times = \bigsqcup_{i=1}^h B^\times x_i B_\doi{p}^\times\Sigma,
\end{eqnarray}
where $h=\#\nnm{Cl}_F^+$ is the narrow class number of $F$ and the elements $x_i \in \yama{B}^\times$ satisfies $(x_i)_\doi{p}=1$ and the images of $x_1,\dots,x_h$ by $\rmf{Nrd}_{B/F}$ give a set of complete representatives of the finite group $\nnm{Cl}_F^+$.  

For $i=1, \dots ,h$, we define
\begin{eqnarray}
\nami{\Gamma}_i=\nami{\Gamma}_i(\doi{a}, \doi{n}^-) & \defeq & B^\times \cap x_i \yama{B}_p^\times \Sigma x_i^{-1}, \\ \label{namigammai}
\Gamma_i= \Gamma_i(\doi{a}, \doi{n}^-) &\defeq & \left\{\gamma \in \nami{\Gamma}_i \relmiddle| \rmf{Nrd}_{B/F}(\gamma) \in U_{F, +} \right\}, \label{gammai}
\end{eqnarray} 
where $U_{F,+}$ is the set of totally positive units.  Using (\ref{Bdblcoset}), we have a bijection
\begin{eqnarray}
\label{bij1}
\xymatrix{
\displaystyle\bigsqcup_{i=1}^h\dblcoset{$\nami{\Gamma}_i$}{$\yama{B}_p^\times$}{$\Sigma_p$} \ar[r]^(0.5)\cong & \dblcoset{$B^\times$}{$\yama{B}^\times$}{$B_\doi{p
}^\times\Sigma$},
}
\end{eqnarray}
which sends $g \in \dblcoset{$\nami{\Gamma}_j$}{$\yama{B}_p$}{$\Sigma_p$}$ to $x_j g$. 
      
By (\ref{bij1}), an $M$-valued automorphic form $\Phi \in S(\Sigma, M)$ (where $\Sigma\subset\yama{B}^\times $is an open compact subgroup and $M$ is a $\ratint_p$-module as in Definition \ref{hokeikeisiki}) can be defined as an $h$-tuple of functions $\phi^1, \dots ,\phi^h$ on $\rmf{GL}_2(F_\spp)$ by the rule $\phi^i(g) = \Phi(x_i g)$ for $i=1,\dots , h$. These functions $\phi^i$ satisfy
\begin{eqnarray}
\label{hokeiseidayo}
\phi^i(\gamma gu)=u^{-1}\phi^i(g)
\end{eqnarray}
for $\gamma \in \nami{\Gamma}_i, g\in\rmf{GL}_2(F_p), u\in\Sigma_p.$

Now we give another description of quaternionic automorphic forms on $\yama{B}^\times$ in terms of latices and the Bruhat-Tits tree.  Let $\Phi \in S(\Sigma, M)$ be an $M$-valued automorphic form, and let $(\phi^1, \dots ,\phi^h)$ be an $h$-tuple attached to $\Phi$ as above.  
There exists a bijection
\[ \xymatrix{{\xi_\doi{a}:}\hspace{-12mm}&\ \yama{B}_\spp^\times\big/\Sigma \ar[r]^(0.176){1:1} & \ywrk{P}(\doi{a}) \defeq\left\{(L_1,L_2)\relmiddle|{\begin{aligned}
&L_1, L_2  \subset F_\spp^2 \\
&\mbox{are lattices s.t.}\ L_1/L_2 \cong \intring{F}/\doi{p}^{\ord_\doi{p}{\doi{a}}}
\end{aligned}}
\right\} \\ & g \ar@{}[u]|{\mws{$\in$}} \ar@{|->}[r] & \left(g(\intring{F_p}\times\intring{F_p}),\ g(\intring{F_p}\times \doi{a}\intring{F_p})\right) \ar@{}[u]|{\mws{$\in$}}.}\]
\begin{defn}
\label{latticenofunction}
For $i=1, \dots,h$ and for $(L_1,L_2)\in\ywrk{P}(\doi{a})$  we define
\[ c_{\phi^i}(L_1,L_2) \defeq g\phi^i(g), \]
where $g(\intring{F_p}\times\intring{F_p})=L_1$ and $g(\intring{F_p}\times \doi{a}\intring{F_p})=L_2$.
\end{defn}
\begin{rmk}
By the formula (\ref{hokeiseidayo}), the function $c_{\phi^i}$ has the following property:
\[c_{\phi^i}(\gamma L_1, \gamma L_2) =\gamma c_{\phi^i}(L_1, L_2) \]
for all $\gamma\in\nami{\Gamma}_i(\doi{a}, \doi{n}^-)$.
\end{rmk}



\subsection{Measure valued automorphic forms}

Fix a valuation ring $\fxdring\subset\cpxnum_p$ finite flat over $\ratint_p$ containing all conjugates of $\intring{F}$. Let $\doi{n}^+$ be an ideal relatively prime to $\spp\doi{n}^-$ and let $\Sigma:=\Sigma(\doi{n}^+, \doi{n}^-)$ be an open compact subgroup of $\yama{B}^\times$.  We denote by $Z_{F,0}$ the kernel of the homomorphism from $Z_F(\Sigma)$ to $\nnm{Cl}_F(\Sigma).$
Explicitly, $Z_{F, 0} = \intring{F_\spp}^\times \big{\slash} \posuni$, where $\posuni$ is the closure of the set of totally positive units of $\intring{F}$ in $\intring{F_\spp}^\times$.  

We define several rings as follows:

\begin{eqnarray*}
\widetilde{\Lambda}_F &\defeq& \fxdring{}[[Z_{F,0}]], \\
\widetilde{\Lambda}_\ratnum &\defeq& \fxdring[[\ratint_p^\times]], \\
\Lambda  &\defeq& \fxdring[[1+\ratint_p]], \\
\Lambda^\dagger &\defeq& \cpxnum_p\left\dlangle T-2 \right\drangle. \\
\end{eqnarray*}
Here, $\widetilde{\Lambda}_F$, $\nami{\Lambda}_\ratnum$, $\Lambda$ are the completed group algebras and $\Lambda^\dagger$ is the ring of convergent power series with coefficients in  $\bfw{C}_p$.  We regard $\nami{\Lambda}_\ratnum$ (resp.\, $\Lambda^\dagger$) as $\nami{\Lambda}_F$ (resp.\, $\Lambda$)-algebras via the homomorphism of $\ywrk{O}$-algebras induced by the following group homomorphisms:
\begin{eqnarray*}
 Z_{F,0}\ni x& \longmapsto &\norm{F_p/\ratnum_p}{x} \in \ratint_p \\
(\mbox{resp.\, }1+\ratint_p \ni x & \longmapsto& x^{T-2} \in \Lambda^\dagger). 
\end{eqnarray*}
\begin{defn}
For an $\intring{F_\spp}$-lattice $L \subset F_\spp^2$, the {\em primitive part} of $L$ is $L\setminus \doi{p}L$ and we denote it by $L'$
\end{defn}
We define several spaces:
\begin{eqnarray*}
X &\defeq &\posuni\big{\backslash}(\intring{F_p}^2)',  \\ X'&\defeq &\posuni\big{\backslash}\lt(\intring{F_\spp}^\times\rt)\times \doi{p} \intring{F_\doi{p}}, \\ \ywrk{W}&\defeq&\posuni\bbs\left(F_\doi{p}^2-\{(0,0)\}\right).  
\end{eqnarray*}
We define the spaces of compactly supported measures on them
\begin{eqnarray*}
\ywrk{D}_* &\defeq& \left\{\mbox{compactly supported measures on }X\right\}, \\
\ywrk{D}_*' &\defeq& \left\{\mbox{compactly supported measures on }X'\right\}, \\
\ywrk{D} &\defeq& \left\{\mbox{compactly supported measures on }\ywrk{W}\right\}. \\
\end{eqnarray*}
Via the zero extension, we have natural inclusions:
\[\ywrk{D}_*'\subset\ywrk{D}_*\subset\ywrk{D}\]
For any  function $f$ on $\ywrk{W}$, we define an action of $\gamma=\gyoretu{a}{b}{c}{d}\in\rmf{GL}_2(F_\spp)$ by
\[f|\gamma(x,y)\defeq f\big(ax+by,cx+dy\big).\]
Then we define the action of $\rmf{GL}_2(F_\spp)$ on $\ywrk{D}$ by
\[ \int_S f \ d(g\cdot \mu) \defeq \int_{g^{-1}(S)} f|g \ d\mu. \]
where  $\mu \in \ywrk{D}$, $g\in \rmf{GL}_2(F_p)$ and $S$ a compact subset of $\ywrk{W}$.
\begin{defn}
Let $\rmf{pr}:\ywrk{D}\rightarrow\ywrk{D}_*$ (resp.\, $\rmf{pr}':\ywrk{D}\rightarrow\ywrk{D}_*'$) be a natural projection via restrictions.  We define the action of $g\in\Delta_0(\doi{n}^+, \doi{n}^-)$ (resp.\, $\Delta_0(\spp\doi{n}^+, \doi{n}^-)$) on $\mu\in\ywrk{D}_*$(resp.\, $\ywrk{D}_*'$) by
\begin{eqnarray*}
(g,\mu) & \longmapsto & \rmf{pr}(g\cdot\mu)\\
(\mbox{resp.\, } (g,\mu) & \longmapsto & \rmf{pr}'(g\cdot\mu)).
\end{eqnarray*}
\end{defn}	
		
\begin{defn}
The set of weight characters $\ywrk{X}_F$ is
\[\ywrk{X}_F \defeq \rmf{Hom}_{\rmf{cont}}\left(Z_{F,0}, \cpxnum_p\right).\] 

\end{defn}
\begin{defn}
A function $f$ on $X$ is said to be homogeneous with respect to the weight character  $\phi\in\ywrk{X}_F$ if
\[ f\bigg|\gyoretu{c}{0}{0}{c}=\phi(\uesen{c})f \]
for any $c\in \intring{F_p}^\times$ and $\uesen{c}$ is an image in $Z_{F,0}$.
\end{defn}
\begin{defn}
For any $k\ge 2$, We define a weight character $P_k$ by $\swtcha{}{k-2}$ .
\end{defn}

\begin{defn}
We put
\begin{eqnarray*}
\ywrk{D}^{\rmf{cycl}}& \defeq& \ywrk{D} \otimes_{\nami{\Lambda}_F}\nami{\Lambda}_\ratnum  \\
\ywrk{D}^{\rmf{cycl}, \dagger} &\defeq& \ywrk{D}^{\rmf{cycl}} \otimes_{\cpxnum_p\yama{\otimes}\Lambda} \Lambda^{\dagger}. \\
\end{eqnarray*}
where $\cpxnum_p\yama{\otimes}\Lambda:=\cpxnum_p[[1+p\ratint_p]]$.
Similarly we define $\ywrk{D}_*^{\rmf{cycl}}$, $\ywrk{D}_*^{\rmf{cycl}, \dagger}$, $(\ywrk{D}_*')^{\rmf{cycl}}$ and $(\ywrk{D}_*')^{\rmf{cycl}, \dagger}$.
\end{defn}
\begin{rmk}
For $\mu\in\ywrk{D}^{\rmf{cycl}}$, we can consider the integration $\ds\int f d\mu$ for only homogeneous functions: Let $\mu = \sum_{i=1}^r \mu_i\otimes_{\Lambda_F}\lambda_i $ with $\mu \in \ywrk{D}$ and  $\lambda_i \in \nami{\Lambda}_\ratnum$. Then for any homogeneous function with the weight character $\swtcha{}{s-2}$, we define
\[ \int f d\mu \defeq \sum_{i=1}^r \langle\lambda_i(s)\rangle\int f d\mu_i, \]
where $s\mapsto \langle\lambda(s)\rangle$ is induced by the composition $\ratint_p^\times \rightarrow 1+p\ratint_p \rightarrow \Lambda^\dagger$. Note that the definition is well-defined.  Similarly, we can define an integral of an element of $\ywrk{D}^{\rmf{cycl}, \dagger}$ when the $s$ is sufficiently close to $2$.
\end{rmk}
\begin{defn}
For any $k\ge2$ and for any $\eta \in \ywrk{B}_{k}$, we define the function  $\nami{\eta}_{}$ on $\ywrk{W}$
\[\nami{\eta}_{}(x,y)\defeq 
\omega_F(x)^{2-k}\eta(x,y), \]
where $\omega_F(x) \defeq \chi_{F,\rmf{cycl}}(x)\big/\lt\langle\chi_{F,\rmf{cycl}}(x)\rt\rangle$.
\end{defn}

\begin{rmk}
This $\nami{\eta}$ is homogeneous with the weight character $P_{n,v}$.
\end{rmk}

\begin{defn}
\label{specialization}
For any $k\ge 2$, we define the specialization map to weight $k$ by
\[
\xymatrix{
\rho_{k}:\hspace{-12mm}&\ \ywrk{D}_* \ar[r] & V_k \\
&\mu \ar@{}[u]|{\mws{$\in$}} \ar@{|->}[r] & \left[ \eta \mapsto \displaystyle\int_{X'} \nami{\eta}(x,y) d\mu(x,y)  \right] \ar@{}[u]|{\mws{$\in$}}.
}
\]
The map $\rho_{k}$ is $\Delta_1(\spp,\doi{n}^-)$-equivariant.  By the same formula, we  define  the specialization map $\rho_{k}$ on $\ywrk{D}_*^\rmf{cycl}$ and on  $\ywrk{D}_*^{\rmf{cycl}, \dagger}$ if $k$ is sufficiently close to $2$ $p$-adically.
\end{defn}
\begin{rmk}
\label{specializationnormk}
If $k\equiv2 \mbox{ \rmf{mod} }p-1$, the specialization map $\rho_{k}$ is $\Delta_0(\spp, \doi{n}^-)$-equivariant. More precisely, we have
\[\rho_{k}(u\cdot\mu)=\omega_F^{2-k}(u)u\cdot\rho_{k}(\mu)\]
for any $\mu \in \ywrk{D}_*$ and $u\in \Sigma_0(\spp,\doi{n}^-)$.
\end{rmk}
This specialization map induces 
\[(\rho_{k})_*:S\left(\Sigma_0(\doi{n}^+,\doi{n}^-), \ywrk{D}_*\right) \longrightarrow S\left(\Sigma_1\left(\spp\doi{n}^+, \doi{n}^-\right), V_{k}\right) \]
(As in Remark \ref{specializationnormk}, we can replace the $\Sigma_1$ of right hand side with $\Sigma_0$ if $k\equiv2 \mbox{ \rmf{mod} }p-1$).

Similarly we also define specialization maps $(\rho_{n, v})_*$ on the spaces $S\left(\Sigma(\doi{n}^+,\doi{n}^-), \ywrk{D}_*^\rmf{cycl} \right)$ and on $S\left(\Sigma(\doi{n}^+,\doi{n}^-), \ywrk{D}_*^{\rmf{cycl}, \dagger} \right) $ if $k$ close to 2 $p$-adically (Note that since the set of double coset $B^\times\bbs\yama{B}^\times\big/\Sigma(\doi{n}^+, \doi{n}^-)$ is finite, the specialization map is defined for all $k\ge2$ such that $k$ is sufficiently close to $2$). 

By definition, the specialization map commutes with the action of Hecke operators (in Definition \ref{Hecke operators})
\[\rho_{k}\circ T(\doi{a})=T(\doi{a})\circ\rho_{k}\]
\[ \rho_{k}\circ T(\doi{b},\doi{b})=T(\doi{b},\doi{b})\circ\rho_{k} \]
for all nonzero ideals $\doi{a}$ prime to $\spp\doi{n}^-$ and $\doi{b}$ prime to $\spp\doi{n}^+\doi{n}^-$. 

On the other hand, the action of $T(\spp)$ on $S\left(\Sigma(\doi{n}^+, \doi{n}^-), \ywrk{D}_*\right)$ is also transferred to the action of $T(\spp)$ on $S\left(\Sigma(\doi{n}^+, \doi{n}^-), V_{k}\right)$:
\begin{prop}
For any $k\ge2$, we have
\[\rho_{k}\circ T(\spp) =T(\spp)\circ\rho_{k}. \]
\end{prop}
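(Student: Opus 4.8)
The plan is to verify the identity directly on values. An element of $S\big(\Sigma(\doi{n}^+,\doi{n}^-),V_k\big)$ is determined by the finitely many functionals it takes on a set of representatives of $\dblcoset{$B^\times$}{$\yama{B}^\times$}{$\Sigma$}$, and each functional in $V_k$ is determined by its values on $\ywrk{B}_k$. Hence it suffices to fix a representative $b\in\yama{B}^\times$ and a test vector $\eta\in\ywrk{B}_k$ and to prove
\[
\rho_k\big((T(\spp)\Phi)(b)\big)(\eta)=\big(T(\spp)(\rho_k\Phi)\big)(b)(\eta).
\]
First I would expand the right-hand side: unwinding the Hecke action on $V_k$-valued forms through the explicit coset decomposition of Hecke $T(\spp)$ (formulas (\ref{migikoset1}), (\ref{migikoset2})), using the left action $(\sigma\cdot v)(\eta)=v(\eta|\sigma)$ on $V_k$ and the definition of $\rho_k$ as integration of $\nami{\eta}$ over $X'$, this side becomes a finite sum $\sum_\sigma\int_{X'}\nami{\eta|\sigma}\,d\Phi(b\sigma)$.

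Next I would expand the left-hand side. The key input is the Hecke action on $\ywrk{D}_*$, namely $\sigma\cdot\mu=\rmf{pr}(\sigma\cdot\mu)$ where $\sigma\cdot\mu$ is the $\ywrk{D}$-action, together with the change-of-variables formula $\int_S f\,d(\sigma\mu)=\int_{\sigma^{-1}(S)}f|\sigma\,d\mu$. Taking $S=X'$ rewrites $\rho_k\big((T(\spp)\Phi)(b)\big)(\eta)$ as $\sum_\sigma\int_{\sigma^{-1}X'}\nami{\eta}|\sigma\,d\Phi(b\sigma)$, where each measure $\Phi(b\sigma)$ is supported on $X$. Two reconciliations are then required. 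First, the domain $\sigma^{-1}X'$ must be intersected with the support $X$: since $X'$ forces the first coordinate to be a unit and the second to be divisible by $\spp$, pulling $X'$ back by the various representatives either lands outside $\intring_{\spp}^2$ (contributing nothing) or reassembles the relevant piece of $X$, so that only the surviving representatives matter. Second, the integrand $\nami{\eta}|\sigma$ must be matched against $\nami{\eta|\sigma}$; since $\nami{\eta}(x,y)=\omega_F(x)^{2-k}\eta(x,y)$, these differ for $\sigma=\gyoretu{a}{b}{c}{d}$ by the twist factor $\omega_F(ax+by)^{2-k}/\omega_F(x)^{2-k}$, which has to be absorbed by the change of variables and by the homogeneity of $\nami{\eta}$ under $\posuni$.

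The main obstacle is precisely this matching at $\spp$. In contrast to the operators $T(\doi{a})$ for $\doi{a}$ prime to $\spp$, whose representatives leave the $\spp$-component untouched and for which commutation with $\rho_k$ is immediate, the operator $T(\spp)$ mixes exactly the $\spp$-component on which $\rho_k$ is defined by restriction to $X'$ and by the weight-$k$ twist $\omega_F^{2-k}$. Moreover the two sides a priori use different coset decompositions ((\ref{migikoset1}) at maximal level on the source, (\ref{migikoset2}) at $\spp$-level on the target), so reconciling these is part of the work. The crux is thus a combinatorial change-of-variables argument: after discarding the representatives that move the support off $X'$, the surviving domains must reassemble to the domain appearing on the other side while the accumulated $\omega_F$-factors collapse to the correct weight-$k$ action. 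Here the twist bookkeeping cannot be bypassed, because $\rho_k$ is only $\Delta_1(\spp,\doi{n}^-)$-equivariant in general and merely $\Delta_0(\spp,\doi{n}^-)$-equivariant when $k\equiv 2\bmod p-1$ (Remark \ref{specializationnormk}); tracking the factor $\omega_F^{2-k}$ through each representative is what makes the identity hold for all $k\ge 2$.
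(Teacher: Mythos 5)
Your proposal is correct and follows essentially the same route as the paper, which simply asserts that the commutation ``follows from a simple computation by using the formulae (\ref{migikoset1}), (\ref{migikoset2})''; your outline is exactly that computation, spelled out with the change of variables on measures, the restriction to $X'$, and the $\omega_F^{2-k}$ bookkeeping. You correctly identify the only genuinely delicate points (the mismatch between the $\Sigma_0$- and $\Sigma_1$-level coset decompositions and the twist at $\spp$), which the paper leaves entirely to the reader.
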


\begin{proof}
This follows from a simple computation by using the formulae (\ref{migikoset1}), (\ref{migikoset2}) of Definition \ref{Hecke operators}.
\qed\end{proof}

\subsection{Hida deformation of measure valued forms}
\label{subsechidadeformation}
In the  previous section, we have defined the specialization maps $\rho_{k}$.  These maps give a $p$-adic family of quaternionic automorphic forms.  By Hida's theory, there exists a $p$-adic family of Hecke eigenforms.

Let $\bfb{T}$ be the free polynomial algebra over $\ratint$ in the symbols $\big\{T(\doi{a})\big\}$ for ideals $\doi{a}$ such that $(\doi{a},\doi{n}^-)=1$ and $\big\{T(\doi{b},\doi{b})\big\}$ for ideals $\doi{b}$ such that $(\doi{b},\spp\doi{n}^+\doi{n}^-)=1$.  Then $\bfb{T}$ acts on the spaces $S\lt(\Sigma_0\lt(\doi{n}^+, \doi{n}^-\rt), \ywrk{D}_*^{\rmf{cycl}, \dagger}\rt)$ and $S\lt(\Sigma_1\lt(\spp\doi{n}^+, \doi{n}^-\rt), V_{k}\rt)$ as Hecke operators.

We call $\Phi_{k} \in S\left(\Sigma_1(\spp\doi{n}^+, \doi{n}^-), V_{k}\right)$ a {\em Hecke  eigenform} if it is an eigenvector for the action of $\bfb{T}$.  When $\Phi_k$ is a Hecke eigenform, we denote by $a(\doi{a}, \Phi_{k})$ the eigenvalue for $T(\doi{a})$ for an ideal $\doi{a}$ is prime to $\doi{n}^+$.  We call $\Phi_{k}$ is {\em $\spp$-ordinary} if $a(\spp, \Phi_{})$ is a $p$-adic unit.

Similarly, we call $\Phi_\infty \in S\left(\Sigma_0(\doi{n}^+, \doi{n}^-), \ywrk{D}_*^{\rmf{cycl}, \dagger}\right)$ a {\em Hecke eigenform} if it is an eigenvector for the action of $\bfb{T}\otimes_\ratint \Lambda^\dagger$, and we denote by  $a(\doi{a}, \Phi_\infty)(T)\in\Lambda^\dagger$ the eigenvalue of $T(\doi{a})$ of $\Phi_\infty$.  We can show that there exists a positive radius such that $a(\doi{a}, \Phi_\infty)(T)$ can be defined for all $\doi{a}$.

Now we state Hida's theory of lifting a Hecke eigenform to a $p$-adic family, in the style of Greenberg-Stevens in \cite{GS1}.

\begin{thm}
\label{Hidatheory}
Let $\Phi=\Phi_{2} \in S\left(\Sigma_0(\spp\doi{n}^+, \doi{n}^-), V_{2}\right)$ be a $\spp$-ordinary Hecke eigenform, and new at primes dividing $\doi{p}\doi{n}^+$. Then there exists a Hecke eigenform \[\Phi_\infty \in S\left(\Sigma_0(\doi{n}^+, \doi{n}^-), \ywrk{D}_*^{\rmf{cycl}, \dagger}\right)\] such that $\rho_{2}(\Phi_\infty)=\Phi_{2}$.
\end{thm}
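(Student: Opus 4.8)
The plan is to follow the method of Greenberg--Stevens \cite{GS1}, organizing the argument around Hida's ordinary projector together with a control theorem. First I would introduce the ordinary projector $e := \lim_{n\to\infty} T(\spp)^{n!}$ and check that it acts on all the spaces in sight. To make the limit converge I would fix an $\fxdring$-integral structure on the measure-valued forms (the measures being valued in $\fxdring$-modules) and observe that $T(\spp)$ acts integrally, so that $T(\spp)^{n!}$ stabilizes $p$-adically on the ordinary part; the projection $\rmf{pr}'$ restricting the support from $\ywrk{W}$ to $X'$ is what makes $T(\spp)$ improve support and hence behave as a completely continuous operator. Since $\dblcoset{$B^\times$}{$\yama{B}^\times$}{$\Sigma_0(\doi{n}^+, \doi{n}^-)$}$ is finite, the ordinary part $e\cdot S(\Sigma_0(\doi{n}^+, \doi{n}^-), \ywrk{D}_*^{\rmf{cycl}, \dagger})$ is a finitely generated $\Lambda^\dagger$-module, and the Proposition just proved, asserting $\rho_k\circ T(\spp)=T(\spp)\circ\rho_k$, ensures that $e$ is compatible with every specialization map and carries $\spp$-ordinary parts to $\spp$-ordinary parts.

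The heart of the argument is a control theorem: for every weight $k$ sufficiently close to $2$ the specialization map induces an isomorphism
\[
\rho_k : \big(e\cdot S(\Sigma_0(\doi{n}^+, \doi{n}^-), \ywrk{D}_*^{\rmf{cycl}, \dagger})\big)\big/(T-k) \xrightarrow{\ \cong\ } e\cdot S(\Sigma_1(\spp\doi{n}^+, \doi{n}^-), V_k),
\]
with $\Sigma_0$ in place of $\Sigma_1$ when $k\equiv 2 \bmod p-1$, and in particular at $k=2$. Injectivity is the formal half: if an ordinary measure has vanishing weight-$k$ moment, then $\spp$-ordinarity makes $T(\spp)$ invertible on the ordinary part and the homogeneity of $\nami{\eta}$ forces all higher moments, hence the measure itself, to vanish. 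Surjectivity is the essential input: starting from an ordinary classical form of weight $k$, I would choose any distribution lift of its moment data and apply $e$; on the ordinary part the operator $T(\spp)$ contracts the support onto $X'$ and its iterates force the moments to stabilize $p$-adically, so that $e$ promotes the distribution to a genuine compactly supported measure lifting the given form. This is precisely where the explicit coset decomposition $(\ref{migikoset1})$ for $T(\spp)$ and the $\spp$-ordinarity hypothesis are used.

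Finally I would deduce the existence of $\Phi_\infty$. Combining finiteness over $\Lambda^\dagger$ with the control theorem, the module $e\cdot S(\Sigma_0(\doi{n}^+, \doi{n}^-), \ywrk{D}_*^{\rmf{cycl}, \dagger})$ is finite and torsion-free over $\Lambda^\dagger$; after restricting to a small closed disc around $T=2$, where $\Lambda^\dagger$ becomes a principal ideal domain, it is free, with fibre at $T=2$ equal to $e\cdot S(\Sigma_0(\spp\doi{n}^+, \doi{n}^-), V_2)$. The eigenform $\Phi=\Phi_2$ defines a $\cpxnum_p$-algebra homomorphism $\lambda_2$ from the image of $\bfb{T}\otimes_\ratint\Lambda^\dagger$ in $\rmf{End}_{\Lambda^\dagger}\big(e\cdot S(\Sigma_0(\doi{n}^+, \doi{n}^-), \ywrk{D}_*^{\rmf{cycl}, \dagger})\big)$, sending $T\mapsto 2$ and each $T(\doi{a})$ to $a(\doi{a},\Phi_2)$. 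Because $\Phi$ is new at the primes dividing $\spp\doi{n}^+$, multiplicity one shows that the associated eigenspace is one-dimensional and that the corresponding point of this Hecke algebra's spectrum is unramified over the weight space $\rmf{Spec}(\Lambda^\dagger)$. Hence $\lambda_2$ lifts uniquely to a $\Lambda^\dagger$-algebra homomorphism $\lambda_\infty$ in a neighbourhood of $T=2$, and the rank-one $\lambda_\infty$-eigenspace is generated by a Hecke eigenform $\Phi_\infty$; normalizing its generator gives $\rho_2(\Phi_\infty)=\Phi_2$.

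I expect the surjectivity half of the control theorem to be the main obstacle: a priori one has only a distribution, namely a functional on homogeneous polynomials, and one must prove that applying $e$ and restricting support to $X'$ produces an honest bounded measure. Controlling this boundedness is exactly where the $\spp$-ordinarity and the support-contracting property of $T(\spp)$ must be combined with care.
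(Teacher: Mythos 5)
The paper does not actually prove Theorem \ref{Hidatheory}: it is stated as a quotation of Hida's theory ``in the style of Greenberg--Stevens,'' with the real content deferred to \cite{GS1}, \cite{H1}, \cite{H2} and \cite{Mo1}. So there is no internal proof to compare you against; what you have written is a reconstruction of the standard argument from the literature, and in outline it is the right one (ordinary projector $e=\lim T(\spp)^{n!}$, finiteness of the ordinary part over $\Lambda^\dagger$ using finiteness of the double coset space, a control theorem identifying $e\cdot S(\Sigma_0(\doi{n}^+,\doi{n}^-),\ywrk{D}_*^{\rmf{cycl},\dagger})/(T-k)$ with ordinary classical forms, and then lifting the eigensystem of $\Phi_2$ through a finite flat $\Lambda^\dagger$-module).

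Two places where your sketch would not survive being written out. First, your injectivity argument for the control theorem is too quick: $\rho_k$ only records the moments of a measure against the finite-dimensional space $\ywrk{B}_{k-2}$ of homogeneous polynomials of one fixed weight, and vanishing of those moments does not by itself force all moments (hence the measure) to vanish, even on the ordinary part. The actual argument in Greenberg--Stevens and in Mok's appendix goes through an explicit lifting lemma together with a Nakayama-type argument over $\Lambda$, and in those treatments it is the injectivity, not the surjectivity, that carries most of the work; your division into a ``formal half'' and an ``essential half'' is inverted relative to the sources. Second, your appeal to unramifiedness of the eigenvariety over weight space at the point of $\Phi_2$ is both unjustified (newness at $\spp\doi{n}^+$ gives multiplicity one for the eigenspace, not \'etaleness over $\rmf{Spec}(\Lambda^\dagger)$) and unnecessary: once the ordinary part is finite and torsion-free over the principal ideal domain $\Lambda^\dagger$ restricted to a small disc about $T=2$, the eigensystem lifts along an irreducible component through the point by standard commutative algebra, without any \'etaleness. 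Neither issue is fatal to the strategy, but as written the proof has a genuine gap at the injectivity step.
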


The following natural map:
\begin{eqnarray}\label{hokeikeisikinoiso}
S\lt(\Sigma_0\lt(\doi{n}^+,\doi{n}^-\rt), \ywrk{D}_*\rt) \overset{\cong}{\longrightarrow} S\lt(\Sigma_0\lt(\spp\doi{n}^+,\doi{n}^-\rt), \ywrk{D}_*'\rt)\end{eqnarray}
is an isomorphism (see \cite{Mo1}, Appendix I\hspace{-.1em}I). Moreover this isomorphism commutes with the action of Hecke operators $T(\doi{a})$ ($\doi{a}$ is a non-zero ideal relatively prime to $\doi{n}^-$) and $T(\doi{b},\doi{b})$ ($\doi{b}$ is a non-zero ideal relatively prime to $\spp\doi{n}^+\doi{n}^-$).
This isomorphism induces an isomorphism
\[S\lt(\Sigma_0\lt(\doi{n}^+,\doi{n}^-\rt), \ywrk{D}_*^{\rmf{cycl}, \dagger}\rt) \overset{\cong}{\longrightarrow} S\lt(\Sigma_0\lt(\spp\doi{n}^+,\doi{n}^-\rt), (\ywrk{D}_*')^{\rmf{cycl}, \dagger}\rt).\]
We denote by $\uesen{\Phi}_\infty$ the image of $\Phi_\infty$ in Theorem \ref{Hidatheory} via the above isomorphism.

\subsection{The indefinite integrals}
\label{indefinite integral}
As in Theorem \ref{Hidatheory}, let  \[\Phi\in S\lt(\Sigma_0(\spp\doi{n}^+,\doi{n}^-),V_{2}\rt)\] be an automorphic form of weight $2$, and let \[\Phi_\infty\in S\left(\Sigma_0(\doi{n}^+, \doi{n}^-), \ywrk{D}_*^{\rmf{cycl}, \dagger}\right)\] be a measure valued form. The forms $\Phi$ and $\Phi_\infty$ are Hecke eigenforms. Let  $(\phi^1, \dots ,\phi^h)$ and $(\phi_\infty^1, \dots ,\phi_\infty^h)$ be $h$-tuples corresponding to $\Phi$ and  $\Phi_\infty$ respectively. We denote by $\alpha_{\spp}$ and $\alpha_{\spp}(s)$ the eigenvalue of $T(\doi{p})$ on $\Phi$ and $\Phi_\infty$ respectively (Note that $\alpha_{\spp}(0)=\alpha_{\spp}$).

Let $\ratnum(\Phi)$ be the Hecke field of the automorphic form $\Phi$, which is a number field generated over $\ratnum$ by all of the Hecke eigenvalues of $\Phi$.  We can regard $\Phi$ as a $\cpxnum_p$-valued function on a finite set.  Recall that we have fixed an embedding $\uesen{\ratnum}\hookrightarrow\cpxnum_p$

Let $\uesen{\Phi}_\infty \in S\left(\Sigma_0(\spp\doi{n}^+, \doi{n}^-), (\ywrk{D}_*')^{\rmf{cycl}, \dagger}\right)$ be the image of $\Phi$ by the isomorphism as in the end of Section  $\ref{subsechidadeformation}$ (which is also a Hecke eigenform) and let $(\uesen{\phi}_\infty^1, \dots ,\uesen{\phi}_\infty^h)$ be the $h$-tuple corresponding to $\uesen{\Phi}_\infty$. We regard $\Phi_\infty$ and $\uesen{\Phi}_\infty$ as $\ywrk{D}^{\rmf{cycl}, \dagger}$-valued functions by the canonical injection $\ywrk{D}_*^{\rmf{cycl}, \dagger}\hookrightarrow\ywrk{D}^{\rmf{cycl}, \dagger}$ and $\lt(\ywrk{D}_*'\rt)^{\rmf{cycl}, \dagger}\hookrightarrow\ywrk{D}_*^{\rmf{cycl}, \dagger}$via the zero extension. Let $U$ be a $p$-adic neighborhood of $2$ such that for $k\ge2$ with $k \in U$ the weight $k$ specialization $\Phi_{k}$ of $\Phi_\infty$ is defined. 

Fix a point $\tau\in\hfpl{\uesen{\ratnum}_p}$ in the $p$-adic upper half plane.  Let $K/F_\spp$ be a finite extension over $F_\spp$ containing  the element $\tau$. 
Then we define a function $F_s^\tau(x,y)$ on $\ywrk{W}$:
\[F_s^\tau(x,y)\defeq\lt\langle \norm{K/\ratnum_p}{x_\doi{p}-\tau_\doi{p}y_\doi{p}}\rt\rangle^{(s-2)/[K:F_\doi{p}]}. \] 
Note that this function is defined on an open compact subset of $\ywrk{W}$ if $s$ is sufficiently close to $2$. The function $F_s^\tau$ does not depend on the choices of $K$.

\begin{defn}
Let $\tau\in \hfpl{\uesen{\ratnum}_p}$.
We define the functions $\theta_{\phi^i}^\tau$ and $\uesen{\theta}_{\phi^i}^\tau$ as follows:
\begin{eqnarray*}
\theta_{\phi^i}^\tau(s;L) &\defeq&  \alpha_{\spp}(s)^{-\ord_p\lt(\rmf{det}\lt(\xi_{\doi{n}^+}^{-1}(L)\rt)\rt)}c_{\phi^i}(L,L)(F_s^\tau) \mbox{ for } L \mbox{ such that }(L,L) \in \ywrk{P}(\doi{n}^+), \\
\thetab_{\phi^i}^\tau(s;L_1,L_2)&\defeq& \alpha_{\spp}(s)^{-\ord_p\lt({\rm det}\lt(\xi_{\spp\doi{n}^+}^{-1}(L_1,L_2)\rt)\rt)}c_{\uesen{\phi}^i}(L_1,L_2)(F_s^\tau) \mbox{ for }(L_1,L_2) \in \ywrk{P}(\spp\doi{n}^+), \\
\end{eqnarray*}
where $c_{\phi^i}$, $c_{\uesen{\phi}^i}$, $\xi$ and $\ywrk{P}$ are as in Section \ref{another description}.
These functions are defined if $s$ is sufficiently close to 2 p-adically.
\end{defn}
The functions $\theta_{\phi^i}^\tau(s;\cdot)$ and $\thetab_{\phi^i}^\tau(s;\cdot)$ are analytic in the variable $s$. So we can consider the derivative with respect to $s$ and define new functions on $\ywrk{P}(\doi{n}^+)$, $\ywrk{P}(\spp\doi{n}^+)$ as follows:
\begin{defn}
Let $\tau\in \hfpl{\uesen{\ratnum}_p}$.  We define functions $I^\tau$ and $\Ib^\tau$ as follows:
\begin{eqnarray*}
I_{\phi^i}^\tau(L) &\defeq& \lt.\frac{d\theta_{\phi^i}^\tau(s;L)}{ds}\rt|_{s=2} \mbox{ for }\ L \mbox{ such that }  (L,L) \in \ywrk{P}(\doi{n}^+)\\
\Ib_{\phi^i}^\tau(L_1,L_2) &\defeq& \lt.\frac{d\thetab_{\phi^i}^\tau(L_1,L_2)}{ds}\rt|_{s=2} \mbox{ for }\ (L_1,L_2)\in\ywrk{P}(\spp\doi{n}^+)\\
\end{eqnarray*}
\end{defn}

\begin{lem}
\label{thetaiotanosiki}
For $\theta_{\phi^i}^\tau$, $\thetab_{\phi^i}^\tau$, $I_{\phi^i}^\tau$ and $\Ib_{\phi^i}^\tau$,  we have the following formulae:
\begin{eqnarray}
\theta_{\phi^i}^\tau(s;L) &=& \sum_{\lt\{L'\relmiddle|(L,L')\in\ywrk{P}(\spp\doi{n}^+)\rt\}}\thetab_{\phi^i}^\tau(s;L,L') \label{thetanosiki1}\\
&=&\thetab_{\phi^i}^\tau(s;L,L') +\thetab_{\phi^i}^\tau\lt(s;\frac{1}{p}L',L\rt) \ \mbox{ for }\ (L,L')\in \ywrk{P}(\spp\doi{n}^+)\label{thetanosiki2}\\
\theta_{\phi^i}^\tau(s;pL) &=& \alpha_{\spp}(s)^{-2}\theta_{\phi^i}^\tau(s;L) \label{thetanosiki3}\\
\thetab_{\phi^i}^\tau(s;pL_1,pL_2) &=& \alpha_{\spp}(s)^{-2}\thetab_{\phi^i}^\tau(s;L_1,L_2) \label{thetanosiki4}
\end{eqnarray}
\begin{eqnarray}
I_{\phi^i}^\tau(L) &=& \sum_{\lt\{L'\relmiddle|(L,L')\in\ywrk{P}(\spp\doi{n}^+)\rt\}}\Ib_{\phi^i}^\tau(L,L') \label{iotanosiki1}\\
&=& \Ib_{\phi^i}^\tau(L,L') +\Ib_{\phi^i}^\tau\lt(\frac{1}{p}L',L\rt) \ \mbox{ for }\ (L,L')\in \ywrk{P}(\spp\doi{n}^+)\label{iotanosiki2}\\
I_{\phi^i}^\tau(pL) &=&\alpha_{\spp}^{-2}I^\tau(L) \label{iotanosiki3}\\
\Ib_{\phi^i}^\tau(pL_1,pL_2) &=& -2\alpha_{\spp}'(0)\alpha_{\spp}^{-3}\thetab_{\phi^i}^\tau(0;L_1,L_2)+\alpha_{\spp}^{-2}\Ib_{\phi^i}^\tau(L_1,L_2) \label{iotanosiki4}
\end{eqnarray}
\end{lem}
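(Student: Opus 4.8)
The plan is to establish the four identities (\ref{thetanosiki1})--(\ref{thetanosiki4}) for the undifferentiated functions first, and then deduce (\ref{iotanosiki1})--(\ref{iotanosiki4}) by applying $\frac{d}{ds}\big|_{s=2}$. Throughout I unwind the definitions
\[\theta_{\phi^i}^\tau(s;L)=\alpha_\spp(s)^{-\ord_p(\rmf{det}(\xi_{\doi{n}^+}^{-1}(L)))}c_{\phi^i}(L,L)(F_s^\tau),\quad \thetab_{\phi^i}^\tau(s;L_1,L_2)=\alpha_\spp(s)^{-\ord_p(\rmf{det}(\xi_{\spp\doi{n}^+}^{-1}(L_1,L_2)))}c_{\uesen{\phi}^i}(L_1,L_2)(F_s^\tau),\]
and argue measure-theoretically on $\ywrk{W}$, using the lattice dictionary $\xi$ of Section \ref{another description}.

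For (\ref{thetanosiki1}) I would partition the primitive vectors of $L$ according to their image in $L/\spp L$, a two-dimensional space over the residue field: a primitive vector spans a unique line, so the primitive vectors split into the $p+1$ subsets indexed by the index-$\spp$ sublattices $L'\subset L$ (equivalently the $L'$ with $(L,L')\in\ywrk{P}(\spp\doi{n}^+)$), the subset attached to $L'$ being the primitive vectors of $L$ lying in $L'$. Under the level-raising isomorphism (\ref{hokeikeisikinoiso}), which is the natural restriction of measures from $X$ to $X'$, the measure $c_{\uesen{\phi}^i}(L,L')$ is exactly the restriction of $c_{\phi^i}(L,L)$ to that subset, and since $L$ is held fixed the two normalizing exponents $\ord_p(\rmf{det}(\xi_{\doi{n}^+}^{-1}(L)))$ and $\ord_p(\rmf{det}(\xi_{\spp\doi{n}^+}^{-1}(L,L')))$ agree; summing the restrictions recovers the whole measure. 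For the two-term relation (\ref{thetanosiki2}) I combine (\ref{thetanosiki1}) with the single identity $\sum_{L''\neq L'}\thetab_{\phi^i}^\tau(s;L,L'')=\thetab_{\phi^i}^\tau(s;\frac{1}{p}L',L)$: in suitable coordinates the primitive vectors of $L$ off the line of $L'$ are precisely the primitive vectors of $\frac{1}{p}L'$ lying in $L$, while the mismatch of $\det$-valuations ($0$ for the pairs $(L,L'')$ versus $-1$ for $(\frac{1}{p}L',L)$) is reconciled by the $T(\spp)$-eigenvalue $\alpha_\spp(s)$ through the coset decompositions (\ref{migikoset1}), (\ref{migikoset2}). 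Formulas (\ref{thetanosiki3}) and (\ref{thetanosiki4}) describe the central homothety $L\mapsto pL$: here $F_s^\tau$ is invariant under $(x,y)\mapsto(px,py)$ because $\norm{F_\spp/\ratnum_p}{p}=p^{[F:\ratnum]}\in p^{\ratint}$ and the projection $\langle\cdot\rangle$ annihilates $p^{\ratint}$, so that $c_{\phi^i}(pL,pL)(F_s^\tau)=c_{\phi^i}(L,L)(F_s^\tau)$ (the central character being trivial in our normalization), while $\ord_p(\rmf{det})$ increases by $2$; multiplying gives the factor $\alpha_\spp(s)^{-2}$, and the same computation applies to pairs.

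The identities (\ref{iotanosiki1})--(\ref{iotanosiki4}) then follow by differentiating at $s=2$. Since (\ref{thetanosiki1}) and (\ref{thetanosiki2}) are $\cpxnum_p$-linear in the $\thetab$'s, differentiation is term-by-term and gives (\ref{iotanosiki1}) and (\ref{iotanosiki2}) at once. For (\ref{iotanosiki3}) and (\ref{iotanosiki4}) I apply the product rule to $\alpha_\spp(s)^{-2}\theta_{\phi^i}^\tau(s;L)$ (resp.\ to the pair version); the cross-term is proportional to $\alpha_\spp'\cdot\theta_{\phi^i}^\tau(2;L)$ (resp.\ $\alpha_\spp'\cdot\thetab_{\phi^i}^\tau(2;L_1,L_2)$). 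In the first case this cross-term vanishes: at $s=2$ one has $F_2^\tau\equiv 1$, so by (\ref{thetanosiki1}) the value $\theta_{\phi^i}^\tau(2;L)=\sum_{L'}\thetab_{\phi^i}^\tau(2;L,L')$ is the sum of the weight-$2$ values over the $p+1$ sublattices out of $L$, which is $0$ because $\Phi$ is new at $\spp$; this leaves exactly (\ref{iotanosiki3}). In the second case the single term $\thetab_{\phi^i}^\tau(2;L_1,L_2)$ does not vanish, so the cross-term survives and yields the term $-2\alpha_\spp'(0)\alpha_\spp^{-3}\thetab_{\phi^i}^\tau(0;L_1,L_2)$ of (\ref{iotanosiki4}).

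The \emph{main obstacle} is the bookkeeping in the first part, and above all the asymmetric relation (\ref{thetanosiki2}): it requires pinning down the precise compatibility of $c_{\phi^i}$ (tame level $\doi{n}^+$) with $c_{\uesen{\phi}^i}$ (level $\spp\doi{n}^+$) under the isomorphism (\ref{hokeikeisikinoiso}), and then tracking how the normalizing powers of $\alpha_\spp(s)$ attached to $\ord_p(\rmf{det})$ interact with the $T(\spp)$-coset decomposition. This is the one place where the Hecke eigenvalue genuinely enters, forcing the two sides, carrying different $\det$-valuations, to match.
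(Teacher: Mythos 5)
Your argument is correct and is essentially the paper's own proof: both rest on decomposing the integral over $X$ according to the reduction of primitive vectors modulo $\spp$, identifying the pieces with values of $\uesen{\phi}^i$ via the restriction isomorphism (\ref{hokeikeisikinoiso}), invoking the $T(\spp)$-eigenvalue to convert the $p$ complementary pieces into $\thetab_{\phi^i}^\tau\lt(s;\frac{1}{p}L',L\rt)$, and then differentiating at $s=2$ with the newform vanishing $\theta_{\phi^i}^\tau\big|_{s=2}=0$ killing the cross term in (\ref{iotanosiki3}). The only difference is organizational -- you partition directly into the $p+1$ lines of $L/\spp L$ and deduce (\ref{thetanosiki2}) by resumming, whereas the paper splits off $X'$ and treats the complement via the explicit cosets $\sigma_\infty,\sigma_c$ of (\ref{migikoset1}), (\ref{migikoset2}) -- which is the same computation.
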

\begin{proof}
The equalities (\ref{thetanosiki3}), (\ref{thetanosiki4}) are easily proved by simple computations. For the equalities (\ref{thetanosiki1}) and (\ref{thetanosiki2}),  take  $g\in\rmf{GL}_2(F_\spp)$ satisfying $g(\intring{F_\spp}\times\intring{F_\spp})=L$. Then we have
\begin{eqnarray*}
c_{\phi^i}(L,L)\lt({F_s^\tau}\rt)&=&\int_X {F_s^\tau}|g\  d\phi_\infty^i(g)\\
&=&\int_{\spp\intring{F_\spp}\times\intring{F_\spp}^\times} {F_s^\tau}|g\ d\phi_\infty^i(g) + \int_{\intring{F_\spp}^\times \times \intring{F_\spp}} {F_s^\tau}|g\ d\phi_\infty^i(g).
\end{eqnarray*}
For the first term, let $L':=g(\spp\intring{F_\spp}\times\intring{F_\spp})$. Then by definition
\begin{eqnarray}
\int_{\spp\intring{F_\spp}\times\intring{F_\spp}^\times} {F_s^\tau}|g\ d\phi_\infty^i(g) = c_{\uesen{\phi}^i}(L,L')\lt({F_s^\tau}\rt). \label{siki0}
\end{eqnarray}
For the second term, let $\sigma_\infty:=\gyoretu{1}{0}{0}{\pi_{\doi{p}}}$ and for any $c\in\intring{F_\spp}$, let $\sigma_c=\gyoretu{1}{0}{\pi_{\doi{p}} c}{\pi_\doi{p}}$ ($\pi_\doi{p}$ is a uniformizer in $\intring{F_\spp}$). Then we have
\begin{eqnarray}
\int_{\intring{F_\spp}^\times \times \intring{F_\spp}} {F_s^\tau}|g\ d\phi_\infty^i(g)  &=& 
\int_{\intring{F_\spp}^\times \times \spp\intring{F_\spp}} {F_s^\tau}|g\sigma_\infty^{-1}\ (\sigma_\infty\cdot d\phi_\infty^i)(g) \nonumber\\
&=& \sum_{c\in \intring{F_\spp}/\spp}\int_{\sigma_c\lt({\intring{F_\spp}^\times \times \spp\intring{F_\spp}}\rt)} {F_s^\tau}|g\sigma_\infty^{-1}\ (\sigma_\infty\cdot d\phi_\infty^i)(g) \nonumber\\
&=& \sum_{c\in \intring{F_\spp}/\spp}\int_{{\intring{F_\spp}^\times \times \spp\intring{F_\spp}}} {F_s^\tau}|g\sigma_\infty^{-1}\sigma_c\ d\uesen{\phi}_\infty^i(g\sigma_\infty^{-1}\sigma_c) \label{siki1}\\
&=&\lt(T(\spp)\uesen{\phi}_\infty^i\rt)(g\sigma_\infty^{-1})\lt({F_s^\tau}\rt) \nonumber \\
&=&\alpha_{\spp}(s)c_{\uesen{\phi}_\infty^i}\lt(\frac{1}{p}L',L\rt)\lt({F_s^\tau}\rt). \label{siki2}
\end{eqnarray}
The equalities (\ref{siki0}), (\ref{siki1}) imply (\ref{thetanosiki1}), and the equalities  (\ref{siki0}), (\ref{siki2}) imply (\ref{thetanosiki2}).

The equalities (\ref{iotanosiki1}), (\ref{iotanosiki2}) and (\ref{iotanosiki4}) easily follows by differentiating (\ref{thetanosiki1}), (\ref{thetanosiki2}) and (\ref{thetanosiki4}) respectively. For (\ref{iotanosiki3}), since $\Phi$ is a newform, $\theta_{\phi^i}^\tau\big|_{s=2}=0$ by the formula (\ref{thetanosiki1}), it also follows by differentiating the equality (\ref{thetanosiki3}).
\qed\end{proof}

From now on, we assume the following condition:
\begin{ass}
\label{katei}
$\alpha_{\spp}^2=1$
\end{ass}
\begin{rmk}
The above assumption holds if $\Phi_{2}$ comes from a modular elliptic curve over $F$ with multiplicative reduction at $\spp$.
\end{rmk}
As in the proof of Lemma \ref{thetaiotanosiki}, we have $\theta_{\phi^i}^\tau\big|_{s=2}=0$. Thus by the assumption (\ref{katei}) and the equality (\ref{thetanosiki1}) , we regard $\thetab_{\phi^i}^\tau\big|_{s=2}$ as an element in $\meas{\prjl{F_\doi{p}}}{\intring{\ratnum(\Phi)}}$. By definition, $\thetab_{\phi^i}^\tau\big|_{s=2}$ does not depend on the choice of $\tau$. So we denote it by $\mu_{\phi^i}$.  Then the measure $\mu_{\phi^i} $ is the same as one defined in \cite{BD1} Section 2.9, namely
\begin{eqnarray}\mu_{\phi^i} = \rmf{proj}_*\lt(c_{\phi^i}(\intring{F_p}^2, \intring{F_p}^2)\rt), \label{phinomeasure}\end{eqnarray}
where $\rmf{proj}$ is the following map:

\[\mapdiag{\rmf{proj}:}{\ywrk{W}}{\prjl{F_p}}{(x,y)}{\ds\frac{x}{y}.}\]

The function $I_{\phi^i}^\tau$ is a function on the set of vertices in the Bruhat-Tits tree $\ywrk{T}_{F_\spp}$ since we have $I_{\phi^i}^\tau(L)=I_{\phi^i}^\tau(pL)$ by Assumption \ref{katei} (see the formula (\ref{iotanosiki3})).  We regard $I_{\phi^i}^\tau$ as an element in $\rmf{Hom}_\ratint\lt(\rmf{Div}\lt(\hfpl{\cpxnum_p}\rt)_, \cpxnum_p\rt)$ as in the end of Section \ref{psinmeasure}.
\begin{defn}
For any $x \in \uesen{\ratnum}_p$, we define
\[\logNorm_p(x)\defeq\frac{1}{[L:\ratnum_p]}\log_p\norm{L/\ratnum_p}{x},\]
where $L$ is a finite extension over $\ratnum_p$ containing $x$. Recall that $\log_p:\ratnum_p^\times\rightarrow\ratnum_p$ is the Iwasawa logarithm map satisfying $\log_p(p)=0$.
\end{defn}
Since $\Phi$ is a function on a finite set, the image of $\mu_{{\phi}^i}$ is a finitely generated abelian subgroup of $\cpxnum_p$. Thus as in Section \ref{batusekibun}, we consider the multiplicative integral attached to $\mu_{\phi^i}$. Then we have the following two lemmas:
\begin{lem}
\label{ordtobatu}
For $x,y \in \hfpl{\cpxnum_p}$ and $\tau \in \hfpl{\uesen{\ratnum}_p}$, we have
\begin{eqnarray}
I_{\phi^i}^\tau(x)-I_{\phi^i}^\tau(y) &= &  2\alpha_{\spp}\alpha_{\spp}'(0)\mu_{\phi^i}([x]-[y]) \label{ordtobatu1}\\
&=& 2\alpha_{\spp}\alpha_{\spp}'(0)\cdot\iota\lt\{\ord_p\otimes_\ratint \rmf{id}_{\intring{\ratnum(\Phi)}}\lt(\batuint{[x]-[y]}{\omega_{\mu_{\phi^i}}}\rt)\rt\}, \label{ordtobatu2}
\end{eqnarray}
where $\iota:\cpxnum_p\otimes_\ratint\cpxnum_p\rightarrow\cpxnum_p$ is a natural multiplication map.
\end{lem}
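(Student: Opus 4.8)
The second equality \eqref{ordtobatu2} is essentially free: I would simply apply Proposition \ref{ordtobatuint} to the measure $\mu_{\phi^i}\in\meas{\prjl{F_\spp}}{\intring{\ratnum(\Phi)}}$ (taking $K=F_\spp$ and $H=\intring{\ratnum(\Phi)}$), which gives
\[
\lt(\ord_p\otimes_\ratint\rmf{id}_{\intring{\ratnum(\Phi)}}\rt)\lt(\batuint{[x]-[y]}{\omega_{\mu_{\phi^i}}}\rt)=\mu_{\phi^i}([x]-[y]),
\]
and then multiply by the scalar $2\alpha_{\spp}\alpha_{\spp}'(0)$ and apply the multiplication map $\iota$. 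Thus all the content is in the first equality \eqref{ordtobatu1}, and I would spend the rest of the argument there.

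For \eqref{ordtobatu1} the plan is to reduce to a single edge of the Bruhat--Tits tree $\ywrk{T}_{F_\spp}$. By \eqref{iotanosiki3} together with Assumption \ref{katei} the function $I_{\phi^i}^\tau$ descends to a function on $\ywrk{V}(\ywrk{T}_{F_\spp})$, so both sides of \eqref{ordtobatu1} arise from combinatorial data on $\ywrk{T}_{F_\spp}$ through the metric-weighted extensions to $\rmf{Div}(\hfpl{\cpxnum_p})$ of Section \ref{psinmeasure}. Since both sides are additive on $\rmf{Div}_0$, it suffices to prove, for adjacent vertices $[L],[L']$ joined by the oriented edge $e=(L,L')$ (that is, $(L,L')\in\ywrk{P}(\spp\doi{n}^+)$, with opposite edge $\uesen{e}=(\frac{1}{p}L',L)$), the local identity
\[
I_{\phi^i}^\tau([L])-I_{\phi^i}^\tau([L'])=2\alpha_{\spp}\alpha_{\spp}'(0)\,\mu_{\phi^i}(U_{\uesen{e}}),
\]
and then telescope along the geodesic in $\ywrk{T}_{F_\spp}$ from $\red(y)$ to $\red(x)$.

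The local identity is precisely where Assumption \ref{katei} is used. Evaluating \eqref{thetanosiki2} at the base vertex $L$ gives $\theta_{\phi^i}^\tau(s;L)=\thetab_{\phi^i}^\tau(s;e)+\thetab_{\phi^i}^\tau(s;\uesen{e})$, while evaluating \eqref{thetanosiki2} at the base vertex $L'$ with the pair $(L',pL)$ and rewriting $\thetab_{\phi^i}^\tau(s;L',pL)=\alpha_{\spp}(s)^{-2}\thetab_{\phi^i}^\tau(s;\uesen{e})$ via \eqref{thetanosiki4} gives $\theta_{\phi^i}^\tau(s;L')=\alpha_{\spp}(s)^{-2}\thetab_{\phi^i}^\tau(s;\uesen{e})+\thetab_{\phi^i}^\tau(s;e)$. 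Subtracting,
\[
\theta_{\phi^i}^\tau(s;L)-\theta_{\phi^i}^\tau(s;L')=\bigl(1-\alpha_{\spp}(s)^{-2}\bigr)\,\thetab_{\phi^i}^\tau(s;\uesen{e}).
\]
Differentiating this at $s=2$, the term proportional to $\Ib_{\phi^i}^\tau(\uesen{e})$ drops out because its coefficient $1-\alpha_{\spp}^{-2}$ vanishes under $\alpha_{\spp}^2=1$; the surviving term is $\bigl(\tfrac{d}{ds}(1-\alpha_{\spp}(s)^{-2})\bigr)\thetab_{\phi^i}^\tau(s;\uesen{e})\big|_{s=2}$, whose coefficient equals $2\alpha_{\spp}\alpha_{\spp}'(0)$ (using $\alpha_{\spp}^{-3}=\alpha_{\spp}$), while $\thetab_{\phi^i}^\tau|_{s=2}=\mu_{\phi^i}$ supplies $\mu_{\phi^i}(U_{\uesen{e}})$. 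This is the required identity.

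The conceptual core is therefore quite clean — Assumption \ref{katei} forces the a priori leading contribution $\Ib_{\phi^i}^\tau$ to cancel, leaving a scalar multiple of the harmonic measure $\mu_{\phi^i}$ — and I expect the main obstacle to be purely organizational: keeping the two orientations of each geometric edge straight along with the homothety $\alpha_{\spp}^{-2}$-twists relating $\thetab_{\phi^i}^\tau$ on $(L',pL)$ and on $(\frac{1}{p}L',L)$, and verifying that the metric-weighted extensions of the \emph{vertex} function $I_{\phi^i}^\tau$ and of the \emph{edge} measure $\mu_{\phi^i}$ from $\ywrk{T}_{F_\spp}$ are compatible (through the boundary map $\partial$) so that the per-edge identity really telescopes into the pairing $\mu_{\phi^i}([x]-[y])$, including the case where $\red(x)$ or $\red(y)$ lies in the interior of an edge.
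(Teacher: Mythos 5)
Your proposal is correct and follows essentially the same route as the paper: the second equality is an immediate application of Proposition \ref{ordtobatuint}, and the first is obtained by reducing to adjacent vertices and computing the per-edge difference, which in the paper is phrased directly via the already-differentiated identities (\ref{iotanosiki2}) and (\ref{iotanosiki4}) rather than by differentiating (\ref{thetanosiki2}) and (\ref{thetanosiki4}) as you do — the two computations are identical. Your explicit attention to the orientation conventions and to the case where the reductions of $x,y$ are not vertices is a useful elaboration of the paper's terse ``we may assume'' step, but not a different argument.
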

\begin{proof}
The equality (\ref{ordtobatu2}) follows from Proposition \ref{ordtobatuint}.  For the equality (\ref{ordtobatu1}), we may assume $\red_{F_\spp}(x)$ and $\red_{F_\spp}(y)$ are vertices of $\ywrk{T}_{F_\spp}$.  Then it follows from the formulae (\ref{iotanosiki2}) and (\ref{iotanosiki4}).
\qed\end{proof}
\begin{lem}
\label{logtobatu}
For  $x \in \hfpl{\cpxnum_p}$ and  $\tau_1, \tau_2 \in \hfpl{\uesen{\ratnum}_p}$, we have
\begin{eqnarray}
I_{\phi^i}^{\tau_1}(x)-I_{\phi^i}^{\tau_2}(x) &=&\int_{\prjl{F_\doi{p}}}\logNorm_p\lt(\frac{t-\tau_1}{t-\tau_2}\rt) d\mu_{\phi^i}(t) \label{logtobatu1}\\
&=&\iota\lt\{\logNorm_p\otimes_\ratint \rmf{id_{\intring{\ratnum(\Phi)}}}\lt(\batuint{[\tau_1]-[\tau_2]}{\omega_{\mu_{\phi^i}}}\rt)\rt\} \label{logtobatu2}
\end{eqnarray}
where $\iota:\cpxnum_p\otimes_\ratint\cpxnum_p\rightarrow\cpxnum_p$ is a natural multiplication map.
\end{lem}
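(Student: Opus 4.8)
The plan is to prove the substantive first equality (\ref{logtobatu1}) by differentiating the theta--functions in $s$ at $s=2$, and then to deduce the second equality (\ref{logtobatu2}) formally, exactly as (\ref{ordtobatu2}) was deduced from Proposition \ref{ordtobatuint}. I would begin by reducing the left-hand side to a value at a single vertex: since $\red_{F_\spp}(x)$ is always a vertex of $\ywrk{T}_{F_\spp}$, the interpolation formula defining $I_{\phi^i}^\tau$ on $\rmf{Div}\lt(\hfpl{\cpxnum_p}\rt)$ gives $I_{\phi^i}^\tau(x)=I_{\phi^i}^\tau(L)$, where $L$ is the lattice class of $\red_{F_\spp}(x)$. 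It then suffices to compute $D(L):=I_{\phi^i}^{\tau_1}(L)-I_{\phi^i}^{\tau_2}(L)$ for a lattice $L$, choosing $g\in\rmf{GL}_2(F_\spp)$ with $g(\intring{F_\spp}^2)=L$. Writing $\theta_{\phi^i}^{\tau_j}(s;L)=\alpha_{\spp}(s)^{-\ord_p(\det g)}c_{\phi^i}(L,L)\lt(F_s^{\tau_j}\rt)$ and using $F_2^\tau\equiv 1$, the difference $c_{\phi^i}(L,L)\lt(F_s^{\tau_1}\rt)-c_{\phi^i}(L,L)\lt(F_s^{\tau_2}\rt)$ vanishes at $s=2$; hence by the product rule the derivative of the $\alpha_{\spp}(s)$-prefactor drops out and
\[
D(L)=\alpha_{\spp}^{-\ord_p(\det g)}\int_X \lt.\frac{d}{ds}\rt|_{s=2}\lt(F_s^{\tau_1}-F_s^{\tau_2}\rt)\Big|g\ d\phi_\infty^i(g).
\]

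Next I would carry out the $s$-derivative. Differentiating the power $\lt\langle N_{K/\ratnum_p}(x-\tau y)\rt\rangle^{(s-2)/[K:F_\spp]}$ at $s=2$ brings the exponent down as a logarithm, producing the term $\tfrac{1}{[K:F_\spp]}\log_p N_{K/\ratnum_p}(x-\tau y)$, which is a fixed scalar multiple of $\logNorm_p(x-\tau y)$ (the elementary degree constant is tracked so as to match the normalization of $\mu_{\phi^i}$). Since this depends only on the ratio, the integrand becomes $\logNorm_p\lt(\tfrac{g\cdot t-\tau_1}{g\cdot t-\tau_2}\rt)$ with $t=x/y$ and $g\cdot$ the M\"obius action. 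Pushing the measure forward along $\rmf{proj}(x,y)=x/y$ and substituting $w=g\cdot t$ rewrites $D(L)$ as an integral over $\prjl{F_\spp}$ against $\alpha_{\spp}^{-\ord_p(\det g)}\rmf{proj}_*\lt(g\cdot\phi_\infty^i(g)\rt)$. The key structural input is that this normalized push-forward is \emph{independent of $g$} and equals $\mu_{\phi^i}$ (cf.\ (\ref{phinomeasure})): this is precisely the harmonicity/distribution relation forced by $\Phi$ being a $T(\spp)$-eigenform together with $\alpha_{\spp}^2=1$ (Assumption \ref{katei}), the same coherence underlying the formulae of Lemma \ref{thetaiotanosiki}. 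This identifies $D(L)$ with the right-hand side of (\ref{logtobatu1}), which is manifestly independent of $x$, and thereby proves the first equality.

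For (\ref{logtobatu2}) I would argue formally. Taking $f_{[\tau_1]-[\tau_2]}(t)=\tfrac{t-\tau_1}{t-\tau_2}$ as the rational function with divisor $[\tau_1]-[\tau_2]$, the multiplicative integral $\batuint{[\tau_1]-[\tau_2]}{\omega_{\mu_{\phi^i}}}$ is the limit of the products $\prod_U f_{[\tau_1]-[\tau_2]}(t_U)\otimes_\ratint\mu_{\phi^i}(U)$ in $\cpxnum_p^\times\otimes_\ratint\intring{\ratnum(\Phi)}$. Because $\logNorm_p\colon\uesen{\ratnum}_p^\times\to\cpxnum_p$ is a homomorphism (it factors through the multiplicative norm and $\log_p$, both carrying products to sums), applying $\logNorm_p\otimes_\ratint\rmf{id}$ and then the multiplication map $\iota$ sends these products to the Riemann sums $\sum_U\logNorm_p\lt(\tfrac{t_U-\tau_1}{t_U-\tau_2}\rt)\mu_{\phi^i}(U)$, whose limit is the additive integral appearing in (\ref{logtobatu1}). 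This gives the second equality and closes the chain.

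The main obstacle is the collapse of the $g$-dependence in the second paragraph, i.e.\ verifying that the lattice-by-lattice $s$-derivatives glue into a single integral against $\mu_{\phi^i}$ on $\prjl{F_\spp}$, with the factor $\alpha_{\spp}^{-\ord_p(\det g)}$ exactly compensating the change of lattice; this is where the eigenform property is indispensable. Everything else -- the vertex reduction, the product-rule simplification using $F_2^\tau\equiv 1$, and the formal passage from (\ref{logtobatu1}) to (\ref{logtobatu2}) -- is routine, the only remaining care being the bookkeeping of the elementary degree constants in the derivative of $F_s^\tau$.
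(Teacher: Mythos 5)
Your proposal is correct and follows essentially the same route as the paper: you obtain (\ref{logtobatu2}) formally from the definition of the multiplicative integral together with the homomorphism property of $\logNorm_p$, and you obtain (\ref{logtobatu1}) by differentiating $F_s^{\tau_1}-F_s^{\tau_2}$ at $s=2$, using the vanishing of the difference of theta-values at $s=2$ to kill the derivative of the $\alpha_{\spp}(s)$-prefactor, and pushing forward along $\rmf{proj}$ via (\ref{phinomeasure}). The one step where you genuinely diverge is the reduction: you work at an arbitrary lattice $L=g\intring{F_\spp}^2$ and must then establish that the normalized push-forward $\alpha_{\spp}^{-\ord_p(\det g)}\rmf{proj}_*\bigl(g\cdot\phi_\infty^i(g)\bigr)$ is independent of $g$ and equals $\mu_{\phi^i}$ --- which you correctly single out as the main obstacle, and which does follow from the distribution relations of Lemma \ref{thetaiotanosiki} (the $T(\spp)$-eigenform property plus Assumption \ref{katei}). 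The paper sidesteps this claim entirely: since Lemma \ref{ordtobatu} shows that $I_{\phi^i}^{\tau}(x)-I_{\phi^i}^{\tau}(y)$ does not depend on $\tau$, the function $x\mapsto I_{\phi^i}^{\tau_1}(x)-I_{\phi^i}^{\tau_2}(x)$ is constant, so one may evaluate at the class of $\intring{F_\spp}^2$, where $g=1$ and the identification of the push-forward with $\mu_{\phi^i}$ is literally the definition (\ref{phinomeasure}); no further gluing argument is needed. Your route costs an extra harmonicity verification but is self-contained; the paper's route recycles Lemma \ref{ordtobatu} and is shorter. Two minor points: your assertion that $\red_{F_\spp}(x)$ is always a vertex of $\ywrk{T}_{F_\spp}$ is not literally true (it may land in the interior of an edge), but since the extension of $I_{\phi^i}^{\tau}$ to $\rmf{Div}\lt(\hfpl{\cpxnum_p}\rt)$ is affine along edges, the identity at vertices still propagates to all $x$; and your insistence on tracking the degree constant in $\lt.\tfrac{d}{ds}F_s^\tau\rt|_{s=2}$ is warranted, since the exponent $(s-2)/[K:F_\doi{p}]$ produces $[F_\spp:\ratnum_p]\cdot\logNorm_p$ rather than $\logNorm_p$ on the nose --- a harmless normalization that the paper's computation passes over silently.
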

\begin{proof}
The equality (\ref{logtobatu2}) follows by definition.  For (\ref{logtobatu1}), by Lemma \ref{ordtobatu}, the difference $I^{\tau_1}-I^{\tau_2}$ is a constant function. Thus we may assume $\red_{F_\spp}(x)$ is the class of lattice $\intring{F_\spp}^2$.  Then  by definition, 
\begin{eqnarray*}
I_{\phi^i}^{\tau_1}(x)-I_{\phi^i}^{\tau_2}(x)&=&\lt.\frac{d}{ds}\phi_\infty(1)(F_s^{\tau_1}-F_s^{\tau_2})\rt|_{s=2} \\
&=&\int_X \logNorm_p\lt(\frac{x-\tau_1y}{x-\tau_2y}\rt) d\phi_\infty(1) \\
&=&\int_{\prjl{F_\spp}} \logNorm_p\lt(\frac{t-\tau_1}{t-\tau_2}\rt)\ d\mu_{\phi^i}(t).
\end{eqnarray*}
The last equality follows from (\ref{phinomeasure}).
\qed\end{proof}

\begin{defn}
We define the {\em indefinite integral} $I_{\phi^i}$ attached to $\phi^i$ as follows:
\[\mapdiag{I_{\phi^i}:}{\hfpl{\uesen{\ratnum}_p}}{\cpxnum_p}{\tau}{I_{\phi^i}(\tau)\defeq I_{\phi^i}^\tau(\tau)}\]
\end{defn} 
\begin{prop}
\label{Gamma actions of the indefinite integral}
The discrete subgroup $\nami{\Gamma}_i \subset B^\times$ acts on the indefinite integral $I_{\phi^i}$ as follows:
\[I_{\phi^i}(\gamma \tau)=\alpha_{\spp}^{-\ord_\spp\lt({\rmf{det}(\gamma)}\rt)}I_{\phi^i}(\tau)\]
for any $\gamma \in \nami{\Gamma}_i$. In particular,  the indefinite integrals are $\Gamma_i$-equivariant. 
\end{prop}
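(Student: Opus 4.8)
The plan is to deduce the transformation law for $I_{\phi^i}(\tau)=I_{\phi^i}^\tau(\tau)$ from a clean multiplicative transformation law for the analytic family $\theta_{\phi^i}^\tau(s;L)$ under the simultaneous substitution $\tau\mapsto\gamma\tau$, $L\mapsto\gamma L$, and then to differentiate at $s=2$, exploiting the vanishing $\theta_{\phi^i}^\tau|_{s=2}=0$ recorded just before Assumption \ref{katei}. Concretely, I would first establish, for $\gamma\in\nami{\Gamma}_i$ and a lattice $L$ with $(L,L)\in\ywrk{P}(\doi{n}^+)$, an identity of the shape
\[\theta_{\phi^i}^{\gamma\tau}(s;\gamma L)=\alpha_{\spp}(s)^{-\ord_\spp(\det\gamma)}\,C_\gamma(s)\,\theta_{\phi^i}^\tau(s;L),\]
where $C_\gamma(s)$ is an explicit scalar, analytic in $s$, independent of the lattice, and satisfying $C_\gamma(2)=1$.

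This identity rests on three inputs. First, the covariance $c_{\phi^i}(\gamma L,\gamma L)=\gamma\cdot c_{\phi^i}(L,L)$ from the remark following Definition \ref{latticenofunction}, valid precisely for $\gamma\in\nami{\Gamma}_i$, combined with the definition of the $\rmf{GL}_2(F_\spp)$-action on measures (which gives $(\gamma\cdot\mu)(f)=\mu(f|\gamma)$), yields $c_{\phi^i}(\gamma L,\gamma L)(F_s^{\gamma\tau})=c_{\phi^i}(L,L)(F_s^{\gamma\tau}|\gamma)$. Second, a direct automorphy-factor computation shows $F_s^{\gamma\tau}|\gamma=C_\gamma(s)\,F_s^\tau$: writing $\gamma=\gyoretu{a}{b}{c}{d}$ via $\iota_\spp$, one checks $(ax+by)-(\gamma\tau)(cx+dy)=\frac{\det\gamma}{c\tau+d}(x-\tau y)$, and since $\langle\cdot\rangle$ is multiplicative the factor $\langle\norm{K/\ratnum_p}{\det\gamma/(c\tau+d)}\rangle^{(s-2)/[K:F_\spp]}=:C_\gamma(s)$ splits off, with $C_\gamma(2)=1$ because the exponent vanishes at $s=2$. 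Third, since $\xi_{\doi{n}^+}^{-1}(\gamma L)=\gamma\cdot\xi_{\doi{n}^+}^{-1}(L)$ and elements of $\Sigma_\spp$ have unit determinant, one gets $\ord_\spp(\det(\xi_{\doi{n}^+}^{-1}(\gamma L)))=\ord_\spp(\det\gamma)+\ord_\spp(\det(\xi_{\doi{n}^+}^{-1}(L)))$, which produces the power of $\alpha_\spp(s)$.

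Differentiating the displayed identity at $s=2$ and using $\theta_{\phi^i}^\tau|_{s=2}=0$ together with $C_\gamma(2)=1$, the product rule collapses: the two terms in which $\theta_{\phi^i}^\tau$ is left undifferentiated vanish, and what survives is $I_{\phi^i}^{\gamma\tau}(\gamma L)=\alpha_\spp^{-\ord_\spp(\det\gamma)}I_{\phi^i}^\tau(L)$ on the vertices of $\ywrk{T}_{F_\spp}$ (here the value of $\alpha_\spp(s)$ at the point of differentiation is $\alpha_\spp$). To pass from vertices to the evaluation at the point $\tau$ defining $I_{\phi^i}$, I would invoke the equivariance $\red_{F_\spp}(\gamma\tau)=\gamma\,\red_{F_\spp}(\tau)$ and the $\rmf{PGL}_2(F_\spp)$-invariance of the metric $d_\ywrk{T}$, so that the barycentric interpolation formula defining $\phi([\tau])$ in Section \ref{psinmeasure} transforms by the same scalar; this yields $I_{\phi^i}(\gamma\tau)=I_{\phi^i}^{\gamma\tau}(\gamma\tau)=\alpha_\spp^{-\ord_\spp(\det\gamma)}I_{\phi^i}^\tau(\tau)=\alpha_\spp^{-\ord_\spp(\det\gamma)}I_{\phi^i}(\tau)$. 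Finally, for $\gamma\in\Gamma_i$ one has $\rmf{Nrd}_{B/F}(\gamma)\in U_{F,+}$, hence $\ord_\spp(\det\gamma)=0$ and the scalar is $1$, giving the asserted $\Gamma_i$-equivariance.

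The main obstacle is the automorphy-factor bookkeeping: one must verify that the correction factor $C_\gamma(s)$ arising from the transformation of $F_s^\tau$ is genuinely independent of $(x,y)$, so that it can be pulled out of the integral against $c_{\phi^i}(L,L)$, and that it becomes trivial exactly at $s=2$. This twofold vanishing, $C_\gamma(2)=1$ paired with $\theta_{\phi^i}^\tau|_{s=2}=0$, is what removes all contributions of the derivatives of $C_\gamma(s)$ and of $\alpha_\spp(s)^{-\ord_\spp(\det\gamma)}$, leaving the clean scalar $\alpha_\spp^{-\ord_\spp(\det\gamma)}$; confirming this cancellation, rather than any single computation, is the delicate part.
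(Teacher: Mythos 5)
Your proposal is correct and follows essentially the same route as the paper: the paper's proof consists of asserting, ``by a simple computation,'' the identity $I_{\phi^i}^\tau(\gamma x)=\alpha_{\spp}^{-\ord_\spp(\det\gamma)}I_{\phi^i}^{\gamma^{-1}\tau}(x)$ (equivalent to your $I_{\phi^i}^{\gamma\tau}(\gamma L)=\alpha_{\spp}^{-\ord_\spp(\det\gamma)}I_{\phi^i}^{\tau}(L)$ after renaming $\tau$) and then specializing $x=\gamma^{-1}\tau$. What you have written is exactly that simple computation carried out in detail — the covariance of $c_{\phi^i}$, the automorphy factor $F_s^{\gamma\tau}|\gamma=C_\gamma(s)F_s^\tau$ with $C_\gamma(2)=1$, the shift of $\ord_\spp(\det)$, and the collapse of the product rule via $\theta_{\phi^i}^\tau|_{s=2}=0$ — so it is a faithful expansion of the paper's argument.
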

\begin{proof}
We have the following formula
\[I_{\phi^i}^\tau(\gamma x)=\alpha_{\spp}^{-\ord_\spp\lt({\rmf{det}(\gamma)}\rt)}I_{\phi^i}^{\gamma^{-1}\tau}(x),\]
by a simple computation. The assertion of Proposition \ref{Gamma actions of the indefinite integral} follows from this by putting $x=\gamma^{-1}\tau$.
\qed
\end{proof}
\begin{defn}
Let $\Phi$ be a quaternionic automorphic form satisfying the conditions as in the beginning of this section.  We define {\em the indefinite integral $I_\Phi$ attached to $\Phi$}  by
\[\mapdiag{I_\Phi:}{\bigoplus_{i=1}^h\rmf{Div}\lt(\Gamma_i\bbs\hfpl{\uesen{\ratnum}_p}\rt)}{\cpxnum_p}{x:=(x_i)_{i=1}^h}{I_\Phi(x)\defeq\ds\sum_{i=1}^h I_{\phi^i}(x_i)}.\]
\end{defn}
\begin{prop}
\label{Hecke actions of indefinite integrals}
Let $\Phi$ be a quaternionic automorphic form satisfying the conditions as in the beginning of this section. The indefinite integral attached to $\Phi$ respects the action of Hecke operators. Namely, for all non-zero ideal $\doi{a}$, we have
\[I_\Phi\lt(T(\doi{a})z\rt)=\alpha(\doi{a}, \Phi)I_\Phi\lt(z\rt).\]
\end{prop}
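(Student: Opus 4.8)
The plan is to reduce the statement to the fact that $\Phi$, and with it the Hida family $\Phi_\infty$, is a Hecke eigenform, by matching the geometric action of $T(\doi{a})$ on the divisor group $\bigoplus_{i=1}^h\rmf{Div}\lt(\Gamma_i\bbs\hfpl{\uesen{\ratnum}_p}\rt)$ with the action of $T(\doi{a})$ on the measure-valued form $\Phi_\infty$. Throughout, the normalizing factor $\alpha_\spp(s)$ occurring in $\theta_{\phi^i}^\tau(s;L)=\alpha_\spp(s)^{-\ord_p(\rmf{det}(\xi_{\doi{n}^+}^{-1}(L)))}c_{\phi^i}(L,L)(F_s^\tau)$ is that of the fixed form $\Phi$; with $\alpha_\spp(s)$ held fixed, $\theta_{\phi^i}^\tau(s;\cdot)$ depends linearly on the lattice function $c_{\phi^i}$ of Definition~\ref{latticenofunction}. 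A key simplification is the vanishing $\theta_{\phi^i}^\tau\big|_{s=2}=0$ (which holds because $\Phi$ is a newform, see the proof of Lemma~\ref{thetaiotanosiki}): since $I_{\phi^i}^\tau(L)=\tfrac{d}{ds}\theta_{\phi^i}^\tau(s;L)\big|_{s=2}$, multiplying $\theta_{\phi^i}^\tau$ by the analytic family eigenvalue $a(\doi{a},\Phi_\infty)$ and differentiating kills the derivative of the eigenvalue, leaving
\[
\lt.\frac{d}{ds}\Bigl(a(\doi{a},\Phi_\infty)\,\theta_{\phi^i}^\tau(s;L)\Bigr)\rt|_{s=2}=\alpha(\doi{a},\Phi)\,I_{\phi^i}^\tau(L),
\]
where $\alpha(\doi{a},\Phi)$ is the weight-$2$ value of the eigenvalue. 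Hence it is enough to show that summing $\theta_{\phi^i}^\tau$ over the $T(\doi{a})$-correspondence reproduces $a(\doi{a},\Phi_\infty)\,\theta_{\phi^i}^\tau$, and then to invoke the eigenform identity for $\Phi_\infty$.

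For the matching, I would recall that under the lattice dictionary $\xi$ of Section~\ref{another description} and the decomposition (\ref{Bdblcoset}), the geometric correspondence $T(\doi{a})$ on the divisor group is induced by the right coset representatives $\{\sigma_k\}$ of the decomposition (\ref{migikoset}) in Definition~\ref{Hecke operators}, acting on $\hfpl{\uesen{\ratnum}_p}$ through $\iota_\spp$ and redistributed among the components $\Gamma_1,\dots,\Gamma_h$ according to $\rmf{Nrd}_{B/F}$. Unwinding $I_\Phi$ on the image divisor, each summand $c_{\phi^i}(L,L)(F_s^\tau)$ is an integral of $F_s^\tau$ against the measure attached to $\phi_\infty^i$, and translating the lattices by the $\sigma_k$ converts the sum over $k$ into exactly the sum defining $\bigl(T(\doi{a})\phi_\infty^i\bigr)$, because the action of $\rmf{GL}_2(F_\spp)$ on the functions $F_s^\tau$ is compatible with the fractional linear action on $\tau$. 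For $\doi{a}$ prime to $\spp$ the determinants $\rmf{det}(\xi^{-1}(\cdot))$ occurring in the normalization are $\spp$-units, so the factor $\alpha_\spp(s)^{-\ord_p(\rmf{det})}$ is unaffected by the correspondence and the matching is immediate; combined with the displayed differentiation identity and the eigenform property $T(\doi{a})\Phi_\infty=a(\doi{a},\Phi_\infty)\Phi_\infty$, this yields $I_\Phi(T(\doi{a})z)=\alpha(\doi{a},\Phi)\,I_\Phi(z)$ in this case.

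The main obstacle is the case $\doi{a}=\spp$ (and ideals divisible by $\spp$), where the determinant changes $\ord_p$ under the correspondence, so genuine powers of $\alpha_\spp(s)$ enter the normalization. This is precisely the local bookkeeping already carried out in Lemma~\ref{thetaiotanosiki}: using the explicit coset decompositions (\ref{migikoset1}), (\ref{migikoset2}), the relations (\ref{thetanosiki1})--(\ref{thetanosiki4}) describe how $\theta_{\phi^i}^\tau$ transforms under the $T(\spp)$-correspondence and the scaling $L\mapsto pL$, and their differentiated forms (\ref{iotanosiki1})--(\ref{iotanosiki4}) do the same for $I_{\phi^i}^\tau$. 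I would feed these into the term-by-term matching so that the normalized Hecke sum again reproduces $T(\spp)\phi_\infty^i$ up to the eigenvalue; Assumption~\ref{katei} and the scaling relation (\ref{iotanosiki3}) guarantee that $I_{\phi^i}^\tau$ descends to the tree $\ywrk{T}_{F_\spp}$, so the sum is well defined on $\Gamma_i\bbs\hfpl{\uesen{\ratnum}_p}$ (cf.\ Proposition~\ref{Gamma actions of the indefinite integral}). Summing over $i$ and combining with the differentiation identity completes the proof. The care needed in this paragraph---correctly tracking the powers of $\alpha_\spp(s)$ through the correspondence at $\spp$---is the crux of the argument.
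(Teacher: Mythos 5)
Your proposal is correct and follows essentially the same route as the paper's proof: both match the divisor-level Hecke correspondence (via the coset decompositions (\ref{migikoset}) and (\ref{Bdblcoset}) and the lattice dictionary) with the action of $T(\doi{a})$ on the measure-valued eigenform $\Phi_\infty$, use Proposition \ref{Gamma actions of the indefinite integral} to absorb the powers of $\alpha_\spp$ coming from determinant shifts at $\spp$, and exploit the vanishing $\theta_{\phi^i}^\tau\big|_{s=2}=0$ so that differentiating the family eigenvalue contributes nothing. If anything, your explicit isolation of the identity $\frac{d}{ds}\bigl(a(\doi{a},\Phi_\infty)(s)\,\theta(s)\bigr)\big|_{s=2}=\alpha(\doi{a},\Phi)\,\theta'(2)$ makes precise a step the paper's computation passes over silently.
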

\begin{proof}
Let $\doi{a}$ be an ideal in $\intring{F}$ prime to $\doi{n}^-$. At first, we explicitly describe the action of $T(\doi{a})$.
As in Definition \ref{Hecke operators}, we set 
\[\left\{ x \in \Delta_0\lt(\doi{n}^+, \doi{n}^-\rt) \relmiddle| \rmf{Nrd}_{B/F}(x)\intring{F}=\doi{a} \right\}=\bigsqcup_m \sigma_m \Sigma_0\lt(\doi{n}^+, \doi{n}^-\rt),\]
and as in Section \ref{another description}, we set
\[\yama{B}^\times = \bigsqcup_{i=1}^h B^\times x_i B_\doi{p}^\times\Sigma_0\lt(\doi{n}^+,\doi{n}^-\rt).\] 
For any $x_i$ and $\sigma_m$, there exist elements $b_{i,m}\in B^\times$, $g_{i,m}\in \rmf{GL}_2(F_\spp)$, $g_{i,m}'\in\Sigma_0\lt(\doi{n}^+,\doi{n}^-\rt)^\spp$ and a number  $1\le l_{i,m}\le h$ such that \[x_i \sigma_m=b_{i,m}x_{l_{i,m}}g_{i,m}g_{i,m}'.\]
We note that there exists $\gamma_i\in\nami{\Gamma}_i$ such that $\ord_\spp\lt(\rmf{det}(\gamma_i)\rt)=1$ (see \cite{Vig}, Corollary 5.9).  Then the actions of $T(\doi{a})$ on an automorphic form $\phi^i$ and an element $(z_i)\in\ds\bigoplus_{i=1}^h\rmf{Div}\lt(\Gamma_i\bbs\hfpl{\uesen{\ratnum}_p}\rt)$ are given as follows:
\begin{eqnarray}
\lt(T(\doi{a})\phi^i\rt)(g)&=&\sum_m\phi^{l_{i,m}}(g_{i,m}g), \label{hecke}\\
T(\doi{a})z_i&=&\sum_m \lt(\gamma_{l_{i,m}}^{-v_{i,m}}g_{i,m}z_i\rt)_{l_{i,m}} \nonumber,
\end{eqnarray}
where $v_{i,m}\defeq\ord_\spp\lt(\rmf{det}(g_{i,m})\rt)$.

Now we get back to the proof of Proposition \ref{Hecke actions of indefinite integrals}. We put  \[z=(z_i)_{i=1}^h\in\ds\bigoplus_{i=1}^h\rmf{Div}(\Gamma_i\bbs\hfpl{\uesen{\ratnum}_p}).\] 
Then we may assume $x_i$ is a class of $\tau_i\in\hfpl{\uesen{\ratnum}_p}$ and $\red_{F_\doi{p}}(\tau_i)$ is a class of a lattice $L_i\in\ywrk{P}(\doi{n}^+)$.  Take an element $g\in \rmf{GL}_2(F_\doi{p})$ satisfying $g(\intring{F_\doi{p}}^2)=L_i$.  Then the assertion follows from the following calculation.
\begin{eqnarray*}
I_\Phi\lt(T(\doi{a})z_i\rt)&=&\sum_mI_{\phi^{l_{i,m}}}(\gamma_i^{-v_{i,m}}g_{i,m}z_i) \\
&\underset{\mbox{\scriptsize Prop \ref{Gamma actions of the indefinite integral}}}{=}& \sum_m\alpha_p^{v_{i,m}}\lt.\frac{d}{ds}\alpha_\spp(s)^{-v_{i,m}}\alpha(s)^{-\ord_\doi{p}(\rmf{det}(g))}\int_X F_s^{g_{i,m}g\tau}\big|g_{i,m}g\ d\phi_\infty^{l_{i,m}}(g_{i,m}g)\rt|_{s=2}\\
&=&\lt.\frac{d}{ds}\alpha_\spp(s)^{-\ord_\spp\lt(\rmf{det}(g)\rt)}\int_X F_s^\tau \big|g\ d\sum_m\phi^{l_{i,m}}(g_{i,m}g) \rt|_{s=2}\\
&\underset{(\ref{hecke})}{=}&\alpha_\spp(\doi{a},\Phi)\lt.\frac{d}{ds}\alpha_\spp(s)^{-\ord_\spp\lt(\rmf{det}(g)\rt)}\int_X F_s^\tau \big|g\ d\phi^i(g) \rt|_{s=2}\\
&=&\alpha_\spp(\doi{a},\Phi)I_\Phi(z_i).
\end{eqnarray*}
\qed
\end{proof}
\begin{thm}
\label{indefinite integral no thm}
As in the beginning of Section \ref{indefinite integral}, let $\Phi$ be a quaternionic automorphi form  satisfying Assumption \ref{katei}. Let $(\phi^1,\dots,\phi^h)$ be an $h$-tuple corresponding to $\Phi$. Let $\mu_{\phi^i}$ be the measure attached to $\phi^i$ (see the equation (\ref{phinomeasure})). For any $\tau_1=(\tau_{1,i})_{i=1}^h$ and $\tau_2= (\tau_{2,i})_{i=1}^h \in \ds\prod_{i=1}^h\hfpl{\uesen{\ratnum}_p}$, we have
\[I_{\Phi}(\tau_1)-I_{\Phi}(\tau_2) = \iota\lt\{\bigg(\lt(\logNorm_p+2\alpha_{\spp}\alpha_{\spp}'(0)\cdot\ord_p\rt)
\otimes_\ratint\rmf{id}_{\intring{\ratnum(\Phi)}}\bigg)\lt(\prod_{i=1}^h\batuint{[\tau_{1,i}]-[\tau_{2,i}]}{\omega_{\mu_{\phi^i}}}\rt)\rt\},\]
where $\iota:\cpxnum_p\otimes_\ratint\cpxnum_p\rightarrow\cpxnum_p$ is a natural multiplication map.
\end{thm}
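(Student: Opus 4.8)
The plan is to reduce the theorem to the two lemmas already in hand, Lemma \ref{logtobatu} and Lemma \ref{ordtobatu}, applied componentwise via a telescoping insertion, and then to exploit the fact that $\logNorm_p$ and $\ord_p$ are group homomorphisms into the additive group $\cpxnum_p$ in order to convert the product over $i$ on the right-hand side into a sum. First I would unwind the definition of $I_\Phi$. Since
\[I_\Phi(\tau_1) - I_\Phi(\tau_2) = \sum_{i=1}^h \lt(I_{\phi^i}^{\tau_{1,i}}(\tau_{1,i}) - I_{\phi^i}^{\tau_{2,i}}(\tau_{2,i})\rt),\]
it suffices to analyze each summand. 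Fix $i$ and abbreviate $\tau_1 \defeq \tau_{1,i}$, $\tau_2 \defeq \tau_{2,i}$; both lie in $\hfpl{\uesen{\ratnum}_p} \subset \hfpl{\cpxnum_p}$, so both lemmas apply directly to them.

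Next I would insert the intermediate term $I_{\phi^i}^{\tau_2}(\tau_1)$ and write
\[I_{\phi^i}^{\tau_1}(\tau_1) - I_{\phi^i}^{\tau_2}(\tau_2) = \lt(I_{\phi^i}^{\tau_1}(\tau_1) - I_{\phi^i}^{\tau_2}(\tau_1)\rt) + \lt(I_{\phi^i}^{\tau_2}(\tau_1) - I_{\phi^i}^{\tau_2}(\tau_2)\rt).\]
In the first difference the superscript (base point) varies while the argument is held at $\tau_1$; Lemma \ref{logtobatu}, applied with $x=\tau_1$, identifies it with $\iota\lt\{\logNorm_p \otimes_\ratint \rmf{id}_{\intring{\ratnum(\Phi)}}\lt(\batuint{[\tau_1]-[\tau_2]}{\omega_{\mu_{\phi^i}}}\rt)\rt\}$. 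In the second difference the superscript is held at $\tau_2$ while the argument varies; Lemma \ref{ordtobatu}, applied with $x=\tau_1$, $y=\tau_2$ and $\tau=\tau_2$, identifies it with $2\alpha_\spp\alpha_\spp'(0)\cdot\iota\lt\{\ord_p \otimes_\ratint \rmf{id}_{\intring{\ratnum(\Phi)}}\lt(\batuint{[\tau_1]-[\tau_2]}{\omega_{\mu_{\phi^i}}}\rt)\rt\}$. The crucial point is that the same multiplicative integral $\batuint{[\tau_1]-[\tau_2]}{\omega_{\mu_{\phi^i}}}$ appears in both, so by linearity in the first tensor factor the two expressions combine into a single application of $\lt(\logNorm_p + 2\alpha_\spp\alpha_\spp'(0)\cdot\ord_p\rt) \otimes_\ratint \rmf{id}_{\intring{\ratnum(\Phi)}}$ to it.

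Finally I would sum over $i$. Both $\logNorm_p$ and $\ord_p$ are group homomorphisms from the multiplicative group into the additive group $\cpxnum_p$ (the former because $\log_p$ is a homomorphism and the norm is multiplicative), hence so is $\logNorm_p + 2\alpha_\spp\alpha_\spp'(0)\cdot\ord_p$. Consequently, writing the group law on $\cpxnum_p^\times\otimes_\ratint\intring{\ratnum(\Phi)}$ multiplicatively, tensoring with $\rmf{id}_{\intring{\ratnum(\Phi)}}$ and composing with $\iota$ sends a product to a sum, so that $\sum_{i=1}^h$ of the componentwise expressions equals the single expression obtained by feeding in the product $\prod_{i=1}^h \batuint{[\tau_{1,i}]-[\tau_{2,i}]}{\omega_{\mu_{\phi^i}}}$. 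This is precisely the right-hand side of the theorem. The argument is essentially bookkeeping once the two lemmas are available; the only step requiring genuine care is the choice of the intermediate term, so that one difference isolates the change of base point (governed by $\logNorm_p$) and the other isolates the change of argument (governed by $\ord_p$), which is what lets the same multiplicative integral factor out of both pieces.
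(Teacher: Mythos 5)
Your proposal is correct and follows the same route as the paper, whose proof of Theorem \ref{indefinite integral no thm} simply cites Lemma \ref{ordtobatu} and Lemma \ref{logtobatu}; your telescoping insertion of $I_{\phi^i}^{\tau_{2,i}}(\tau_{1,i})$ and the observation that the same multiplicative integral appears in both resulting differences is exactly the bookkeeping the paper leaves implicit. The final step converting the sum over $i$ into a single evaluation on the product of multiplicative integrals, via additivity of $\iota\circ\bigl(\bigl(\logNorm_p+2\alpha_{\spp}\alpha_{\spp}'(0)\ord_p\bigr)\otimes_\ratint\rmf{id}\bigr)$, is also right.
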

\begin{proof}
The assertion follows from Lemma \ref{ordtobatu} and Lemma \ref{logtobatu}.
\qed\end{proof}

\section{Main results}
\subsection{$p$-adic uniformization of Shimura curves}
\label{p-adic uniformization of Shimura curves}
We use the same notation as in Section \ref{Quaternionic automorphic forms}. Let $F$ be a totally real field and $B$ a definite quaternion algebra over $F$ as in Section \ref{Basic definitions}.  Fix an archimedean place $\infty_0$ of $F$. We denote by $\ywrk{B}$ the definite quaternion algebra over $F$ obtained from $B/F$ by switching the invariants at $\infty_0$ and $\doi{p}$, namely
\[
\begin{array}{rcccl}
\inv_{\infty_0}\ywrk{B}&=&\inv_\doi{p}B &=&0 \\
\inv_{\doi{p}}\ywrk{B}&=&\inv_{\infty_0}B&=&1/2 \\
\inv_{v}\ywrk{B} &=& \inv_{v}B &\multicolumn{2}{l}{\mbox{ for any }v\neq\infty_0, \doi{p}.}\end{array}
\]
We fix an isomorphism $\ywrk{B}\otimes_F(F_{\infty_0})\overset{\cong}{\rightarrow} M_2(\bfw{R})$. Let $\ywrk{R}$ be an Eichler order of level $\doi{n}^+$.  By Shimura's theory, there exists a Shimura curve $X_\ywrk{B}(\doi{n}^+)$, which is a proper smooth curve defined over $F$, whose $\cpxnum$-valued points are given by the double coset:
\[X_\ywrk{B}(\doi{n}^+)(\cpxnum)=\ywrk{B}^\times\bbs\ywrk{B}^\times(\adele_F)\big{/}\yama{\ywrk{R}}^\times\cdot\rmf{SO}_2(\bfw{R}), \]
where $\yama{\ywrk{R}}\defeq\ywrk{R}\otimes_{\intring{F}}\yama{\intring{F}}$.
Let $\ywrk{B}_+^\times$ be the subgroup of $\ywrk{B}^\times$ consisting of elements whose  images by the reduced norm are totally positive elements. We have a double coset decomposition of the following form: \[\ywrk{B}^\times(\adele_{F,f})=\bigsqcup_{i=1}^h \ywrk{B}_+^\times y_i\yama{\ywrk{R}}^\times,\] 
where $h=\#\nnm{Cl}_F^+$ is the narrow class number of $F$.  We define a subgroup  $\Delta_i\subset\ywrk{B}_+^\times$ by
\[\Delta_i \defeq \ywrk{B}_+^\times \cap y_i \yama{\ywrk{R}}^\times y_i^{-1}.\]
The set of $\cpxnum$-valued points of the Shimura curve $X_\ywrk{B}(\doi{n}^+)$ can be written as
\[X_\ywrk{B}(\doi{n}^+)(\cpxnum) \cong \bigsqcup_{i=1}^h \Delta_i \bbs \bfw{H},\]
where $\bfw{H}:=\left\{ z\in \cpxnum \big| \rmf{Im}(z)>0 \right\}$ is the Poincar\'e upper half plane.

We recall the $p$-adic uniformization of the Shimura curve $X_\ywrk{B}(\doi{n}^+)$. By the theorem of  Cerednik-Drinfeld (see \cite{BC} and \cite{BZ}), we have a rigid analytic isomorphism
\begin{eqnarray}
\label{Cerednik-Drinfeld theorem} 
X_\ywrk{B}(\doi{n}^+)(\cpxnum_p) = \bigsqcup_{i=1}^h \Gamma_i \bbs \hfpl{\cpxnum_p},\end{eqnarray}
where $\Gamma_i=\Gamma_i(\doi{n}^+,\doi{n}^-)\subset B_\spp^\times\cong\rmf{GL}_2(F_\spp)$ as in (\ref{gammai}) of Section \ref{another description}.
Let \[X_i=\Gamma_i\bbs\hfpl{\cpxnum_p}\hspace{3mm} \mbox{($i=1,\dots,h$)}\] be the connected components of $X_\ywrk{B}(\doi{n}^+)(\cpxnum_p)$.  

Let $\Phi$ and $h$-tuple $(\phi^1, \dots ,\phi^h)$ be automorphic forms as in the beginning of Section \ref{indefinite integral}. Let $\mu_{\phi^i}$ be the measure attached to $\phi^i$ on $\prjl{F_\spp}$ constructed as in (\ref{phinomeasure}).

\begin{defn}
\label{logNormnodefdayon}
Let $A$ be the abelian variety of $\Phi$-component in $\bigoplus_{i=1}^{h}J(X_i)\otimes_\ratint\ratnum$. Then we define \[\logNorm_p^A: {A(\cpxnum_p)\otimes_\ratint\ratnum}\longrightarrow{\cpxnum_p}\]
by
\[\logNorm_p^A\lt((P_i)_i\rt)\defeq\sum_{i=1}^h\logNorm_p^{X_i}(P_i)(\mu_{\phi^i}). \] 
for any $(P_i)_i\in A(\cpxnum_p)\otimes_\ratint\ratnum\subset \bigoplus_{i=1}^hJ(X_i)\otimes_\ratint\ratnum$.  Here, $\logNorm_p^{X_i}$ is constructed in the end of Section \ref{L-invariants}.
\end{defn}

 Now we state one of the main results of this paper.

\begin{thm}
\label{main results}
Let $\Phi$, $X_i$ and $\phi^i$ be as above. Let $J(X_i)$ be the Jacobian variety of $X_i$. Let $A$ be the abelian variety of $\Phi$-component in $\bigoplus_{i=1}^{h}J(X_i)\otimes_\ratint\ratnum$ . Then, for any $P=(P_i)_{i=1}^h\in A(\cpxnum_p)\otimes_\ratint\ratnum$, we have
\[I_{\Phi}\lt(\sum_{i=1}^h\nami{P}_i\rt)=\logNorm_p^{A}(P),\]
where $\nami{P}_i$ is a lift of $P_i$ in $\rmf{Div}_0(X_i)$.
\end{thm}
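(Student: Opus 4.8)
The plan is to prove the identity one connected component at a time and then sum, matching the indefinite integral against the two terms in the $L$-invariant definition of $\logNorm_p^{X_i}$. By linearity of both sides in $P$ and by the very definitions of $I_\Phi$ and of $\logNorm_p^A$ (Definition \ref{logNormnodefdayon}), it suffices to show, for each $i$, that $I_{\phi^i}(\tilde P_i) = \logNorm_p^{X_i}(P_i)(\mu_{\phi^i})$, where $\tilde P_i \in \rmf{Div}_0(X_i)$ is any lift of $P_i$. Write $H = \rmf{Coker}(\rmf{Tr})_{\Gamma_i}$, let $\mu$ be the universal $H$-valued measure of Theorem \ref{manindrinfeld}, and let $m_i \in \rmf{Hom}_\ratint(H, \intring{\ratnum(\Phi)})$ correspond to $\mu_{\phi^i}$ under the isomorphism $\meas{\ywrk{L}_{\Gamma_i}}{\intring{\ratnum(\Phi)}}^{\Gamma_i} \cong \rmf{Hom}_\ratint(H, \intring{\ratnum(\Phi)})$ of Section \ref{The Manin-drinfeld theorem}. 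The first step is the observation $\mu_{\phi^i} = m_i \circ \mu$, whence the defining limit of products for the multiplicative integral yields the functoriality
\[ \batuint{\tilde P_i}{\omega_{\mu_{\phi^i}}} = (\rmf{id} \otimes_\ratint m_i)\lt(\batuint{\tilde P_i}{\omega_{\mu}}\rt) = (\rmf{id} \otimes_\ratint m_i)(\nami{P}_i), \]
where $\nami{P}_i := \batuint{\tilde P_i}{\omega_{\mu}} \in \C^\times \otimes_\ratint H$ is precisely the Manin--Drinfeld lift of $P_i$.

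Next I would feed this into Theorem \ref{indefinite integral no thm} and unwind the pairing of $\C \otimes H$ against $\rmf{Hom}_\ratint(H,\C)$, obtaining
\[ I_{\phi^i}(\tilde P_i) = \lt\langle (\logNorm_p\otimes\rmf{id})(\nami{P}_i),\, m_i\rt\rangle + 2\alpha_\spp\alpha_\spp'(0)\lt\langle (\ord_p\otimes\rmf{id})(\nami{P}_i),\, m_i\rt\rangle. \]
On the other side, the definition of $\logNorm_p^{X_i}$ in Section \ref{L-invariants} reads
\[ \logNorm_p^{X_i}(P_i)(\mu_{\phi^i}) = \lt\langle (\logNorm_p\otimes\rmf{id})(\nami{P}_i),\, m_i\rt\rangle - \lt\langle \mathscr{L}_{\logNorm_p}\big((\ord_p\otimes\rmf{id})(\nami{P}_i)\big),\, m_i\rt\rangle. \]
The $\logNorm_p$-terms already coincide, so the whole theorem reduces to the single identity
\[ 2\alpha_\spp\alpha_\spp'(0)\lt\langle (\ord_p\otimes\rmf{id})(\nami{P}_i),\, m_i\rt\rangle = -\lt\langle \mathscr{L}_{\logNorm_p}\big((\ord_p\otimes\rmf{id})(\nami{P}_i)\big),\, m_i\rt\rangle. \]

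The hard part, and the only place the geometry enters, is this last identity: it is a comparison between the analytic $L$-invariant $2\alpha_\spp\alpha_\spp'(0)$ and the geometric $L$-invariant $\mathscr{L}_{\logNorm_p}$. My key point is that it is \emph{forced} by the $\Gamma_i$-invariance of the indefinite integral together with the non-degeneracy of the monodromy pairing, so that no separate computation of $\mathscr{L}_{\logNorm_p}$ is required. Indeed, for $\gamma\in\Gamma_i$ the period is $j(\gamma) = \batuint{[\gamma\tau]-[\tau]}{\omega_\mu}$; since $\rmf{Nrd}_{B/F}(\gamma)$ is a unit, Proposition \ref{Gamma actions of the indefinite integral} (Assumption \ref{katei} being in force so that $I_{\phi^i}$ descends to $X_i$) gives $I_{\phi^i}(\gamma\tau)=I_{\phi^i}(\tau)$, hence $I_{\phi^i}\big([\gamma\tau]-[\tau]\big)=0$. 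Substituting $[\gamma\tau]-[\tau]$ into the displayed formula for $I_{\phi^i}$ and invoking the defining relation $(\logNorm_p\otimes\rmf{id})\circ j = \mathscr{L}_{\logNorm_p}\circ(\ord_p\otimes\rmf{id})\circ j$ produces
\[ \lt\langle \big(\mathscr{L}_{\logNorm_p}+2\alpha_\spp\alpha_\spp'(0)\big)\big((\ord_p\otimes\rmf{id})(j(\gamma))\big),\, m_i\rt\rangle = 0 \qquad (\gamma\in\Gamma_i). \]
Because $(\ord_p\otimes\rmf{id})\circ j$ induces the isomorphism $H_1(\Gamma_i,\C)\xrightarrow{\cong}\C\otimes_\ratnum H$ recalled in Section \ref{L-invariants}, the vectors $(\ord_p\otimes\rmf{id})(j(\gamma))$ span $\C\otimes H$; thus $m_i\circ\big(\mathscr{L}_{\logNorm_p}+2\alpha_\spp\alpha_\spp'(0)\big)=0$ on all of $\C\otimes H$, and evaluating at $(\ord_p\otimes\rmf{id})(\nami{P}_i)$ gives exactly the required identity. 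Summing $I_{\phi^i}(\tilde P_i)=\logNorm_p^{X_i}(P_i)(\mu_{\phi^i})$ over $i$ then proves the theorem. The one subtlety I would be careful about is that although $[\gamma\tau]-[\tau]$ is trivial in $\rmf{Div}(X_i)$, it is a nonzero class in the degree-$0$ coinvariants $\rmf{Div}_0(\hfpl{\C})_{\Gamma_i}$ on which $\batuint{}{\omega_\mu}$ and $I_{\phi^i}$ are actually evaluated, so that the vanishing $I_{\phi^i}\big([\gamma\tau]-[\tau]\big)=0$ is genuine information rather than a tautology.
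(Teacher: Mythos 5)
Your proposal is correct and follows essentially the same route as the paper: apply Theorem \ref{indefinite integral no thm} to a lift of $P_i$, unwind the definition of $\logNorm_p^{X_i}$ from Section \ref{L-invariants}, and reduce everything to the comparison of $2\alpha_\spp\alpha_\spp'(0)$ with $-\mathscr{L}_{\logNorm_p}$ against the functional determined by $\mu_{\phi^i}$. The one place you go beyond the paper is welcome: where the paper dismisses that comparison with ``by similar arguments as in Section \ref{L-invariants},'' you actually prove it, by feeding the periods $[\gamma\tau]-[\tau]$ ($\gamma\in\Gamma_i$) into Theorem \ref{indefinite integral no thm}, using the $\Gamma_i$-invariance of $I_{\phi^i}$ from Proposition \ref{Gamma actions of the indefinite integral} together with the defining relation of $\mathscr{L}_{\logNorm_p}$ and the fact that $(\ord_p\otimes\rmf{id})\circ j$ spans $\C\otimes_\ratint\rmf{Coker}(\rmf{Tr})_{\Gamma_i}$ --- which is precisely the argument the paper intends.
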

\begin{proof}
Let $\mu_i$ be the universal measure associated to $X_i$ as in Theorem \ref{manindrinfeld}.  
We have the decomposition into Hecke eigenspaces:
\[\cpxnum_p\otimes_\ratint\lt(\bigoplus_{i=1}^h\rmf{Coker}(\rmf{Tr})_{\Gamma_i}\rt)=\bigoplus_j V_j.\]
Let $V_0$ be an eigenspace corresponding to $\Phi$.  Put $P\defeq\sum_iP_i$ and $\nami{P}\defeq\sum_i\nami{P}_i$.  Then by Theorem \ref{indefinite integral no thm} and as in Section \ref{L-invariants} , we have:
\[
\begin{aligned}&I_{\phi^i}(\nami{P}_i)\\
&=\iota\lt[\sum_i\lt.\lt\{ \logNorm_p\otimes_\ratint \mu_{\phi^i} +2 \alpha'(0)\lt(\rmf{id}_{\cpxnum_p}\otimes_\ratint \mu_{\phi^i}\rt)\circ \lt(\ord_p \otimes_\ratint \rmf{id}\rt)\rt\}\rt|_{V_0}\lt(\batuint{\nami{P}_i}{\omega_{\oplus_i\mu_i}}\rt)\rt]
\end{aligned}
\]
\[
\begin{aligned} &\logNorm_p^{X_i}(P_i)(\mu_{\phi^i})\\
&=\iota\lt[\sum_i\lt.\lt\{\logNorm_p\otimes_\ratint \mu_{\phi^i} - \lt(\rmf{id}_{\cpxnum_p}\otimes_\ratint \mu_{\phi^i}\rt)\circ \ywrk{L}\circ \lt(\ord_p \otimes_\ratint \rmf{id}\rt)\rt\}\rt|_{V_0}\lt(\batuint{\nami{P}_i}{\omega_{\mu_i}}\rt)\rt],\end{aligned}
\]
where $\iota:\cpxnum_p\otimes_\ratint\cpxnum_p\rightarrow\cpxnum_p$ is the national multiplication map and $\ywrk{L}\defeq\ywrk{L}_{\logNorm_p}$ is the $L$-invariant attached to the homomorphism $\logNorm_p$.  By similar arguments as in Section \ref{L-invariants}, we have
\[\lt.-2 \alpha'(0)\sum_i\lt(\rmf{id}\otimes_\ratint \mu_{\phi^i}\rt)\rt|_{V_0}=\sum_i\lt.\lt(\rmf{id}\otimes_\ratint \mu_{\phi^i}\rt)\circ \ywrk{L}\rt|_{V_0}.\]
\qed
\end{proof}

\subsection{Heegner points and $p$-adic integrals}

As in Section \ref{p-adic uniformization of Shimura curves}, let $B$ be a definite quaternion algebra over $F$, which is ramified at all archimedean places and the finite places associated to prime  ideals dividing $\mathfrak{n}^-$.  Let $\doi{a}$ be a non-zero ideal of $\intring{F_\spp}$ relatively prime to $\doi{n}^-$ and let $R$ an Eichler  order of $B$ of level $\doi{a}$.
\begin{defn}
Let $K$ be a quadratic extension over $F$. An {\em optimal embedding of level $\doi{a}$} from K into B is a pair $(\Psi, b)\in\rmf{Hom}_{F-\rmf{alg}}(K,B)\times\lt(\yama{B}^\times\big/\yama{R}^\times\rt)$ satisfying
\[\Psi(\intring{K})=b\yama{R}b^{-1}\cap\Psi(K).\]
\end{defn}
For an optimal embedding $(\Psi, b)$ of level $\doi{a}$, we define
\[R_b\defeq B\cap b\yama{R}b^{-1}.\]
Then $R_b$ is an Eichler order of level $\doi{a}$, and $\Psi$ gives an embedding of $\intring{K}$ into $R_b$.  We define an action of $g\in \yama{B}^\times$ on an optimal embedding $(\Psi, b)$ by conjugation:
\[g\cdot(\Psi, b)\defeq(g\Psi g^{-1}, gb).\]
We denote by $[\Psi,b]$ the orbit of $(\Psi,b)$ associated with the subgroup $B^\times \subset\yama{B}^\times$.  The set of the orbits of optimal embeddings of level $\doi{a}$ is denoted by $\rmf{Emb}_F(K,B,\doi{a})$.  We also define an action of the ideal class group $\rmf{Pic}(\intring{K})$ of $K$  by identifying it with $\yama{K}^\times\big/K^\times\yama{\intring{K}^\times}$ and for any $\rho\in\rmf{Pic}(\intring{K})$ we often denote $\rho\cdot[\Psi,b]$ by $[\Psi^\rho, b^\rho]$.

The set $\rmf{Emb}_F(K,B,\doi{a})$ is described as follows. Let $\Sigma=\Sigma_0(\doi{a}, \doi{n}^-)$.  As in Section $\ref{another description}$, there is a bijection
\[
\xymatrix{
\rmf{Nrd}_{B/F}: B^\times\bbs\yama{B}^\times\big/B_\spp^\times\Sigma \ar[r]^(0.49){1:1} & F_+^\times\bbs\adele_{F, f}^\times\big/\yama{\intring{F}}^\times =:\nnm{Cl}_{F,+} 
},
\]
Then we have a decomposition
\begin{eqnarray} 
\label{Bdblcoset2}
\yama{B}^\times = \bigsqcup_{i=1}^h B^\times x_i B_\spp^\times\Sigma 
\end{eqnarray}
where $h=\#\nnm{Cl}_F^+$ is the narrow class number of $F$, the element $x_i \in \yama{B}^\times$ satisfies $(x_i)_\doi{p}=1$ and the images of $x_1,\dots,x_h$ by $\rmf{Nrd}_{B/F}$ gives a set of complete representatives of the finite group  $\nnm{Cl}_F^+$.  Via the bijection $\xi_\doi{a}$ as in Section $\ref{another description}$, we have a natural inclusion:
\begin{eqnarray}
\label{inclusiondayo}
\rmf{Emb}_F(K,B,\doi{a}) \hookrightarrow \bigsqcup_{i=1}^h\nami{\Gamma}_i\bbs\bigg(\rmf{Hom}_{\intring{F}-\rmf{alg}}\big(\intring{K}[1/p], R_i[1/p]\big)\times \ywrk{P}(\doi{a})\bigg).
\end{eqnarray}
We always regard the left hand side as a subset of right hand side.  For an element $[\Psi,b]\in\rmf{Emb}_F(K,B,\doi{a})$, if $[\Psi,b]$ belongs to the $i$-th component, we denote it by $[\Psi_i, L_i]$, which is the class of $(\Psi_i,L_i)\in \rmf{Hom}_{\intring{F}-\rmf{alg}}\big(\intring{K}[1/p], R_i[1/p]\big)$.

From now on, we suppose the following three conditions are satisfied:
\begin{ass}
\label{Heegner condition}
\begin{enumerate}
\item \label{he1}All prime ideals $\doi{q}$ of $\intring{K}$ dividing $\doi{a}$ split in $K$,
\item \label{he2}All prime ideals $\doi{q}$ of $\intring{K}$ dividing $\doi{n}^-$ are inert in $K$,
\item \label{he3}The prime ideal $\doi{p}$ dose not split in $K$.
\end{enumerate}
\end{ass}
\begin{rmk}
The above assumptions \ref{he1} and \ref{he2} are called the {\em Heegner condition}. Under these two assumptions, an optimal embedding of level $\doi{a}$ exists. 
(see \cite{Vig}, Theorem 3.1 and Theorem 3.2).
\end{rmk}

Fix an isomorphism $\iota_p:B_\doi{p}^\times\overset{\cong}{\rightarrow}\rmf{GL}_2(F_\doi{p})$.  For an optimal embedding $(\Psi, b)$, let $\tau_\Psi$ be the fixed point of $\iota_p\lt(\Psi(K_\doi{p})\rt)$ in $\hfpl{\uesen{\ratnum}_p}$ satisfying:
\[\iota_p\lt(\Psi(\alpha)\rt)\cdot\lt(\begin{array}{c}\tau_\psi \\ 1 \end{array}\rt)=\alpha\lt(\begin{array}{c}\tau_\psi \\ 1 \end{array}\rt), \]
for any $\alpha\in K_\doi{p}$.

Let $\psi_K:\rmf{Pic}(\intring{K})\rightarrow\lt\{\pm 1\rt\}$ be an unramified quadratic character over $K$. Via the isomorphism (\ref{Cerednik-Drinfeld theorem}), let \[(\xi_i)_{i=1}^h\in\bigoplus_{i=1}^h\rmf{Div}\lt(\Gamma_i\bbs\hfpl{\cpxnum_p}\rt)\otimes_\ratint\ratnum\] be the element corresponding to the Hodge class $\xi \in \rmf{Pic}(X_\ywrk{B})(F)\otimes_\ratint\ratnum$, which has degree one on each geometric component, and satisfies the relation \[T(\doi{l})\xi=(\norm{F/\ratnum}{\doi{l}}+1)\xi\] for any prime ideal $\doi{l}$ of $\intring{F}$ prime to $\doi{p}\doi{n}^+\doi{n}^-$.
\begin{defn}
\label{Heegner point}
For an unramified quadratic character $\psi_K:\rmf{Pic}(\intring{K})\rightarrow\{\pm1\}$, we define a point $\bfb{P}_{\psi_K}$ belonging to $\bigoplus_{i=1}^hJ\lt(X_i\rt)\otimes_\ratint\ratnum$ as follows:
\begin{eqnarray*}
\bfb{P}_{\psi_K}&\defeq&\ds\sum_{\rho\in\rmf{Pic}(\intring{K})}\psi_K(\rho)([\tau_{\Psi^\rho}]_{i_{\Psi^\rho}}-\xi_{i_{\Psi^\rho}})
\end{eqnarray*}
\end{defn}
\begin{rmk}
\label{Heegner point 2}
The image of $\bfb{P}_{\psi_K}$ by the projection to the $\Phi$-part can be described as \[\lt(\norm{F/\ratnum}{\doi{q}}+1-\alpha(\doi{q},\Phi)\rt)^{-1}\lt(\norm{F/\ratnum}{\doi{q}}+1-T(\doi{q})\rt)\ds\sum_{\rho\in\rmf{Pic}(\intring{K})}\psi_K(\rho)[\tau_{\Psi^\rho}]_{i_{\Psi^\rho}}\]
where $\doi{q}$ is a prime ideal of $\intring{F}$ prime to $\doi{p}\doi{n}^+\doi{n}^-$ satisfying $\norm{F/\ratnum}{\doi{q}}+1\neq\alpha(\doi{q},\Phi)$.
\end{rmk}
\begin{rmk}
\label{global point}
When $\doi{p}$ is inert in $K$,  by the complex multiplication theory, the point $\bfb{P}_{\psi_K}$ is global point and belongs to Jacobian $\bigoplus_{i=1}^hJ\lt(X_i\rt)(K_{\psi_K})\otimes_\ratint\ratnum$, where $K_{\psi_K}$ is the quadratic field over $K$ cut out by $\psi_K$. For details, see \cite{Mo1}, Section 4.2 and 4.3.
\end{rmk}

\begin{defn}
Denote $K=F(\lambda)$ where $\lambda\in K$ is the element such that $\lambda^2\in F$ and is totally negative.  The {\em partial $p$-adic $L$-function} $\ywrk{L}_p$ associated to $\Phi$ and a class optimal embedding $[\Psi,b]=[\Psi_i, L_i]$ of level $\doi{n}^+$ is defined by
\[\ywrk{L}_p(\Phi,\Psi, s)\defeq\begin{cases}\lt\langle C_i\cdot\lt|\rmf{Nrd}_{B/F}(x_i)\rt|_{\adele_{F,f}}\rt\rangle^{s/2}\theta_{\phi^i}^{\tau_\Psi}(s; L_i)& \mbox{if $\doi{p}$ is inert in $K$}\\ \lt\langle C_i\cdot\lt|\rmf{Nrd}_{B/F}(x_i)\rt|_{\adele_{F,f}}\rt\rangle^{s/2}\frac{1}{2}\lt(\theta_{\phi^i}^{\tau_\Psi}(s;L_i)+\theta_{\phi^i}^{\tau_\Psi}\big(s;\iota_\doi{p}\big(\Psi_i(\lambda)\big)L_i\big)\rt) & \mbox{if $\doi{p}$ is ramified in $K$},\end{cases}\]
where $\ds C_i={c}\big/\sqrt{\lt|N_{F/\ratnum}\lt(-\rmf{Nrd}_{B/F}(\lambda)\rt)\rt|_\bfw{R}}$.

We also define, for any unramified quadratic character $\psi_K:\rmf{Pic}(\intring{K})\rightarrow\{\pm1\}$,
\[\ywrk{L}_p(\Phi, \psi_K, s)\defeq\sum_{\rho\in\rmf{Pic}(\intring{K})}\psi_K(\rho)\ywrk{L}_p( \Phi,\Psi^\rho, s).\]
\end{defn}

\begin{prop}
\label{main results 2}
The following equalities hold:
\begin{eqnarray*}\lt.\frac{d}{ds}\ywrk{L}_p(\Phi,\psi_K,s)\rt|_{s=0}&=&I_\Phi(\nami{\bfb{P}}_{\psi_K})\\&=&\sum_{i=1}^h\logNorm_p^{X_i}(\bfb{P}_{{\psi_K},i})(\mu_{\phi^i})
\end{eqnarray*}
where $\nami{\bfb{P}}_{\psi_K}$ is a lift of $\bfb{P}_{\psi_K}$ in $\ds\bigoplus_{i=1}^h\rmf{Div}_0\lt(\Gamma_i\bbs\hfpl{\uesen{\ratnum}_p}\rt)\otimes_\ratint\ratnum$, and  $\bfb{P}_{{\psi_K},i}$ is the image of the projection to $J(X_i)$.
\end{prop}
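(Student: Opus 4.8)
The plan is to establish the two equalities separately. The first, between the derivative of the partial $p$-adic $L$-function and the indefinite integral, is obtained by unwinding the definitions of $\ywrk{L}_p$, $\theta_{\phi^i}^\tau$ and $I_{\phi^i}$; the second, between the indefinite integral and the $p$-adic logarithm of the Heegner point, follows from Theorem \ref{main results}.

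\textbf{First equality.} I would write $\ywrk{L}_p(\Phi,\psi_K,s)=\sum_{\rho}\psi_K(\rho)\ywrk{L}_p(\Phi,\Psi^\rho,s)$ and differentiate each summand at $s=0$ by the Leibniz rule. Each summand is the product of the normalizing factor $\langle C_{i}|\rmf{Nrd}_{B/F}(x_{i})|_{\adele_{F,f}}\rangle^{s/2}$, which is $1$ at $s=0$, and the analytic function $\theta_{\phi^{i}}^{\tau_{\Psi^\rho}}(s;\cdot)$ (a single value when $\doi{p}$ is inert in $K$, a half-sum of two values when $\doi{p}$ is ramified). The derivative in $s$ of $\theta_{\phi^i}^{\tau}$ recovers, by the definition of $I_{\phi^i}^{\tau}$, the indefinite-integral values, so the $\theta$-factor contributes $I_{\phi^{i}}^{\tau_{\Psi^\rho}}(L_{i})$ in the inert case and $\tfrac12\bigl(I_{\phi^{i}}^{\tau_{\Psi^\rho}}(L_{i})+I_{\phi^{i}}^{\tau_{\Psi^\rho}}(\iota_\doi{p}(\Psi_i(\lambda))L_i)\bigr)$ in the ramified case, while the remaining logarithmic-derivative terms of the normalizing factor, paired with the values of $\theta$ at $s=0$, are designed to account for the Hodge-class correction $\xi_i$ built into $\bfb{P}_{\psi_K}$.

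Next I would identify these quantities with $I_{\phi^{i}}(\tau_{\Psi^\rho})=I_{\phi^{i}}^{\tau_{\Psi^\rho}}([\tau_{\Psi^\rho}])$ via the interpolation formula of Section \ref{psinmeasure}: when $\doi{p}$ is inert in $K$ the point $\tau_{\Psi^\rho}$ reduces to a vertex of $\ywrk{T}_{F_\doi{p}}$, so $I_{\phi^i}^{\tau}([\tau])$ is the value at that single vertex $L_i$; when $\doi{p}$ is ramified the reduction is the midpoint of an edge with endpoints $L_i$ and $\iota_\doi{p}(\Psi_i(\lambda))L_i$, and the equal distances $\tfrac12$ produce the half-sum. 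Summing over $\rho$ and $i$, and using the description of the $\Phi$-part in Remark \ref{Heegner point 2}, where the operator $\norm{F/\ratnum}{\doi{q}}+1-T(\doi{q})$ annihilates the Eisenstein class $\xi$, together with the Hecke-equivariance $I_\Phi(T(\doi{a})z)=\alpha(\doi{a},\Phi)I_\Phi(z)$ of Proposition \ref{Hecke actions of indefinite integrals}, yields $\frac{d}{ds}\ywrk{L}_p(\Phi,\psi_K,s)|_{s=0}=I_\Phi(\nami{\bfb{P}}_{\psi_K})$.

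\textbf{Second equality.} Here I would apply Theorem \ref{main results} to the $\Phi$-component $A$ and the point $\bfb{P}_{\psi_K}$ (Definition \ref{logNormnodefdayon}), after reducing both sides to the $\Phi$-projection. On the left, $I_\Phi$ kills every non-$\Phi$ Hecke eigencomponent by Proposition \ref{Hecke actions of indefinite integrals}, so $I_\Phi(\nami{\bfb{P}}_{\psi_K})$ equals $I_\Phi$ of the $\Phi$-projection. On the right, the pairing $\logNorm_p^{X_i}(\cdot)(\mu_{\phi^i})$ is Hecke-equivariant and $\mu_{\phi^i}$ lies in the $\Phi$-eigenspace of $\meas{\ywrk{L}_{\Gamma_i}}{\cpxnum_p}^{\Gamma_i}$ (see the formula (\ref{phinomeasure})), so evaluation against $\mu_{\phi^i}$ also depends only on the $\Phi$-projection. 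With both reductions in place, Theorem \ref{main results} gives $I_\Phi(\nami{\bfb{P}}_{\psi_K})=\sum_{i}\logNorm_p^{X_i}(\bfb{P}_{\psi_K,i})(\mu_{\phi^i})$.

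\textbf{Main obstacle.} I expect the crux to be the first equality, and within it the exact reconciliation of normalizations: the interplay between the point of differentiation and the central vanishing, the verification that the logarithmic-derivative terms of the normalizing factor reproduce precisely the Hodge-class contributions $\xi_i$, and the matching of the factor $\tfrac12$ from the interpolation of Section \ref{psinmeasure} with the $\tfrac12$ in the ramified case of the definition of $\ywrk{L}_p$. One must also keep careful track, through the bijection $\xi_{\doi{n}^+}$, of the lattice $L_i$ attached to $\tau_{\Psi^\rho}$ and of the identification of its neighbouring vertex with $\iota_\doi{p}(\Psi_i(\lambda))L_i$. By contrast, the second equality is essentially formal once Theorem \ref{main results} and the two Hecke-equivariance statements are granted.
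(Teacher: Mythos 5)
Your proposal is correct and follows essentially the same route as the paper, whose proof consists of exactly the two sentences you expand: the first equality from the definitions of $\theta_{\phi^i}^{\tau}$ and $I_{\phi^i}$ together with Proposition \ref{Hecke actions of indefinite integrals} and Remark \ref{Heegner point 2}, and the second from Theorem \ref{main results}. One small correction to your first step: the logarithmic-derivative term of the normalizing factor $\lt\langle C_i\cdot\lt|\rmf{Nrd}_{B/F}(x_i)\rt|_{\adele_{F,f}}\rt\rangle^{s/2}$ drops out simply because $\theta_{\phi^i}^{\tau}$ vanishes at the central point (as $\Phi$ is new at $\spp$), not because it matches the Hodge-class correction $\xi_i$; the latter is eliminated solely by the Hecke-equivariance argument you correctly give afterwards.
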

\begin{proof}
The first equality follows from the definition of the indefinite integrals and Proposition \ref{Hecke actions of indefinite integrals} by using Remark \ref{Heegner point 2}.  The second equality follows from Theorem \ref{main results}.
\qed
\end{proof}

\subsection{$p$-adic $L$-functions of Hilbert modular forms}

Let $A$ be a modular abelian variety of $\rmf{GL}(2)-type$ over $F$ which is multiplicative at $\doi{p}$ (in addition, if $[F:\ratnum]$ is odd, suppose that $A$ is multiplicative at some prime other than $\doi{p}$).  Let $f$ be the Hilbert modular eigenform corresponding to $A$ of weight 2, level $\doi{n}\defeq \doi{p}\doi{n}^+\doi{n}^-$ and trivial character (here, $\doi{n}^+$ and $\doi{n}^-$ are non-zero ideals prime to $\doi{p}$, and $\doi{n}^-$ is a square free ideal such that $\#\{\doi{q}\text{: prime dividing }\doi{n}^-\}\equiv [F:\ratnum]\mod 2$).  Let $\bfb{f}_\infty$ be the Hida family of $f$ and we denote by $f_k$, $f_k^{\#}$ the weight $k$ specialization of $\bfb{f}_\infty$ and its $p$-stabilization respectively. Let $\Phi$ be a quaternionic automorphic form of weight 2 and level $\doi{p}\doi{n}^+$ corresponding to $f$ by the Jacquet-Langlands correspondence.  

 We briefly recall the $p$-adic $L$-functions and the two-variable $p$-adic $L$-functions of Hilbert modular forms (\cite{Dab}, \cite{Mo1}).  We shall explain them in a general situation.  Let $g$ be a cuspidal Hilbert modular eigenform of parallel weight $k$ and ordinary at  $\doi{p}$, and let $\chi$ (resp. $\psi$) be a finite order Hecke character of $F$ unramified outside $\doi{p}$ and infinite places (resp. at the conductor of $g$).  There exists {\em $p$-adic $L$-function} $L_p(s,g, \chi\psi)$ defined on $s\in \ratint_p$ such that
\[L_p(r,g,\chi\psi)=\left(1-\frac{\chi\psi\omega_F^{1-r}(\doi{p})\ywrk{N}(\doi{p})^{r-1}}{\alpha_1(\doi{p},g)}\right)\left(1-\frac{\lt(\chi\psi\omega_F^{1-r}\rt)^{-1}(\doi{p})\alpha_2(\doi{p},g)}{\ywrk{N}(\doi{p})^r}\right)\cdot\frac{L\lt(r,g,(\chi\psi\omega_F^{1-r})^{-1}\rt)}{\Omega_{g, \chi\psi, r}}\]
for any $r=1,\dots,k-1$. Here, $L(s,g,\chi\psi\omega_F^{1-r})$ is the complex $L$-function of $g$, $\alpha_1(\doi{p},g)$ (resp. $\alpha_2(\doi{p},g)$) is the $p$-adic unit root (resp. $p$-adic non-unit root) of the Hecke polynomial\[X^2-a(\doi{p},g)X+\epsilon_\doi{p}\ywrk{N}(\doi{p})^{k-1}\] with \[\epsilon_\doi{p}=\begin{cases}0 & \doi{p}|\text{(the conductor of g)} \\ 1 & \text{otherwise}, \end{cases}\] and  $\Omega_{g, \chi\psi, r}\in\cpxnum^\times$ is a complex number such that \[\frac{L(m,g,\lt(\chi\psi\omega_F^{1-r}\rt)^{-1})}{\Omega_{g, \chi\psi, r}}\in\uesen{\ratnum}.\]
We always regard an element in $\uesen{\ratnum}$ as in $\cpxnum_p$ through the fixed embedding $\uesen{\ratnum}\hookrightarrow\cpxnum_p$. Note that $\Omega_{g, \chi\psi,r}$ can be described as the product of {\em period} of $f$ and some non-zero constants (see \cite{Mo1}, Section 5.1). 

\subsection{Proof of the main theorems}

Let $L_p(s,f,\psi)$ be the $p$-adic $L$-function of $f$ with $\chi=1$.  Clearly, when $\psi(\doi{p})=\alpha_\spp$, we have trivial equality 
\[L_p(1,f,\psi)=0,\]
and we call it {\em exceptional zero}.

According to \cite{Mo2}, Theorem 6.8, we can construct the {\em two-variable $p$-adic $L$-function} $L_p(s,k, \psi)$ such that
\[L_p(s,m,\psi)=L_p\lt(s,f_m^{\#},\psi\rt) \]
for any $m\in U\cap\ratint$ and $m\equiv 2 \mod 2(p-1)$, where $U\in \ratint_p$ is an open neighborhood of $2$.

We prove the main theorem of this paper which is a generalization of the main results of \cite{BD1} and \cite{Mo1}.

\begin{thm}
\label{main results 3}
Let $\psi$ be a quadratic Hecke character of F of conductor prime to $\doi{n}$. We assume that the following conditions are satisfied:
\begin{eqnarray*}
\psi(\doi{p})&=&\alpha_\doi{p}, \\
\epsilon(f,\psi)&=&-1,
\end{eqnarray*}
where $\alpha_\doi{p}$ is the Hecke eigenvalue of $T(\doi{p})$ and $\epsilon(f,\psi)$ is the sign of the functional equation of the complex $L$-function $L(s,f,\psi)$.  Let $A$ be the abelian variety of $\rmf{GL}(2)$-type associated to $f$ as in beginning of the previous section. Then we have
\renewcommand{\labelenumi}{(\arabic{enumi})}
\begin{enumerate}
\item There exists a global point $P_\psi\in A(F^\psi)\otimes_\ratint\ratnum$ and $l\in \ratnum(f)^\times$ such that \[\lt.\frac{d^2}{dk^2}L_p(k/2,k,\psi)\rt|_{k=2}=l\cdot\lt(\logNorm_p^A(P_\psi)\rt)^2,\]
where $F^\phi$ is a quadratic extension corresponding to $\phi$.\label{thm2}
\item The element $P_\psi$ is of infinite order if and only if  the derivative of the complex $L$-function is nonzero $L'(1,A/F,\psi)\neq 0$, where $L(s,A/F,\psi)$ is the $L$-function of $A$. In that case, \[\rmf{dim}_{\ratnum(f)}\lt(A(F^\psi)\otimes_\ratint\ratnum \rt)_\psi=1.\] \label{thm3}
\end{enumerate}
\end{thm}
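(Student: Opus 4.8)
The plan is to deduce both assertions from two inputs already available: the identity of Proposition \ref{main results 2}, which computes the derivative of the partial $p$-adic $L$-function as the $p$-adic logarithm of the Heegner-type point, and an explicit factorization of the two-variable $p$-adic $L$-function $L_p(s,k,\psi)$ coming from the Gross--Hatcher--Xue central-value formula. First I would fix the auxiliary data: choose the CM field $K=F^\psi$ cut out by $\psi$ and an unramified quadratic (genus) character $\psi_K$ of $\rmf{Pic}(\intring{K})$ compatible with $\psi$, so that Assumption \ref{Heegner condition} holds and the point $\bfb{P}_{\psi_K}$ of Definition \ref{Heegner point} is defined and global (Remark \ref{global point}). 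Then, for each weight $k$ in a neighbourhood of $2$ in the Hida family, the explicit formula of Gross--Hatcher and Xue expresses the central complex value $L(k/2,f_k,\psi)$ as an element of $\ratnum(f)^\times$ times the square of a toric period of the quaternionic eigenform $\Phi_k$ against the CM packet $\{\tau_{\Psi^\rho}\}$ weighted by $\psi_K$. Interpolating this weight-by-weight identity and invoking uniqueness of $p$-adic interpolation, I would obtain a $p$-adic factorization
\[
L_p(k/2,k,\psi)\;=\;l\cdot\Theta_\psi(k)^2
\]
valid near $k=2$, where $l\in\ratnum(f)^\times$ and $\Theta_\psi$ is a $p$-adic analytic function interpolating the square roots of the toric periods; up to a normalization, $\Theta_\psi$ coincides with the partial $p$-adic $L$-function $\ywrk{L}_p(\Phi,\psi_K,\,\cdot\,)$ after the substitution relating the weight $k$ (centered at $2$) to its cyclotomic variable (centered at $0$).

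The second step is differentiation. Since $\epsilon(f,\psi)=-1$, the sign of the functional equation forces the central vanishing $L(1,f,\psi)=0$, hence $\Theta_\psi(2)=0$. Differentiating the factorization twice along the line $s=k/2$ and using $\Theta_\psi(2)=0$ annihilates the cross term, leaving
\[
\left.\frac{d^2}{dk^2}L_p(k/2,k,\psi)\right|_{k=2}\;=\;2\,l\cdot\bigl(\Theta_\psi'(2)\bigr)^2 .
\]
It then remains to identify $\Theta_\psi'(2)$. By the normalization of $\Theta_\psi$, up to absorbing a further factor into $l$ it equals $\frac{d}{ds}\ywrk{L}_p(\Phi,\psi_K,s)|_{s=0}$, and Proposition \ref{main results 2} together with Theorem \ref{main results} gives
\[
\Theta_\psi'(2)\;=\;\left.\frac{d}{ds}\ywrk{L}_p(\Phi,\psi_K,s)\right|_{s=0}\;=\;\logNorm_p^A(\bfb{P}_{\psi_K}).
\]
Taking $P_\psi$ to be the $\Phi$-isotypic component of $\bfb{P}_{\psi_K}$, a global point in $A(F^\psi)\otimes_\ratint\ratnum$ by Remark \ref{global point}, this proves assertion (1) after replacing $2l$ by $l$.

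For assertion (2) I would invoke the complex Gross--Zagier formula for Shimura curves over totally real fields (Yuan--Zhang--Zhang, Zhang): the N\'eron--Tate height of $\bfb{P}_{\psi_K}$ is a nonzero $\ratnum(f)^\times$-multiple of $L'(1,A/F,\psi)$, so $P_\psi$ is of infinite order precisely when $L'(1,A/F,\psi)\neq 0$. In that case the non-torsion point gives the lower bound $\dim_{\ratnum(f)}(A(F^\psi)\otimes_\ratint\ratnum)_\psi\ge 1$, and the matching upper bound follows from the Kolyvagin-type Euler system attached to the family $\{\bfb{P}_{\psi_K}\}$ (Nekov\'a\v{r}'s extension of Kolyvagin's method to the totally real setting), which bounds the $\psi$-component of the Selmer group once the Heegner point is non-torsion.

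The hard part will be the factorization of the first step. The Gross--Hatcher--Xue formula is a weight-by-weight statement over $\cpxnum$, and promoting it to the exact $p$-adic identity $L_p(k/2,k,\psi)=l\cdot\Theta_\psi(k)^2$ requires controlling the archimedean and $p$-adic periods $\Omega_{g,\chi\psi,r}$ uniformly in $k$, verifying that the algebraic proportionality constant $l$ is a single element of $\ratnum(f)^\times$ independent of $k$, and accounting for the exceptional-zero Euler factor at $\doi{p}$ — which vanishes at $(s,k)=(1,2)$ precisely because $\psi(\doi{p})=\alpha_\doi{p}$ — without disturbing the square structure. Once this factorization and the variable matching with $\ywrk{L}_p(\Phi,\psi_K,\,\cdot\,)$ are in place, the passage to the second derivative and the appearance of $\bigl(\logNorm_p^A(P_\psi)\bigr)^2$ are formal consequences of Proposition \ref{main results 2} and Theorem \ref{main results}.
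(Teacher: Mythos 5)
Your overall architecture --- factor the two-variable $p$-adic $L$-function, differentiate twice at the central line, feed in Proposition \ref{main results 2} and Theorem \ref{main results}, and finish part (2) with Gross--Zagier plus a Kolyvagin-type argument --- is the same as the paper's, but your first step contains a genuine gap that the rest of the argument inherits. The factorization you posit, $L_p(k/2,k,\psi)=l\cdot\Theta_\psi(k)^2$ with $\Theta_\psi$ essentially equal to $\ywrk{L}_p(\Phi,\psi_K,\cdot)$ and $K=F^\psi$, is not what the Gross--Hatcher--Xue/Waldspurger formula can give. Over a CM extension $K/F$ with an unramified quadratic character $\psi_K$, the square of the toric period computes the Rankin--Selberg value $L(k/2,f_k/K,\psi_K)$, and genus theory factors this as a \emph{product of two distinct twists} $L(k/2,f_k,\psi_1)\,L(k/2,f_k,\psi_2)$ with $\psi_1\psi_2$ equal to the quadratic character of $K/F$. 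A perfect square of the $\psi$-factor alone would force that character to be $\psi^2=1$, i.e.\ $K=F$, which is absurd. Moreover $F^\psi$ need not be totally imaginary, and Assumption \ref{Heegner condition} is not automatic for it, so the Heegner point $\bfb{P}_{\psi_K}$ you want to use is not even defined in your setup.

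The paper resolves exactly this by introducing an auxiliary quadratic character $\psi'$, unramified at the primes dividing $\doi{n}$ and chosen so that $L_p(1,2,\psi')\in\ratnum(f)^\times$ and so that the extension cut out by $\psi\psi'$ is a CM field $K$ with $\doi{p}$ inert (so Remark \ref{global point} applies). Mok's factorization then reads $\ywrk{L}_p(\Phi,\psi_K,s)^2=\eta(s+2)\,L_p((s+2)/2,s+2,\psi)\,L_p((s+2)/2,s+2,\psi')$ with $\eta(2)\in\ratnum(f)^\times$; since $L_p(1,2,\psi)=L_p'(1,2,\psi)=0$ by the exceptional zero and the sign condition, the second derivative at $s=0$ kills all terms except $\eta(2)L_p(1,2,\psi')$ times the second derivative of the $\psi$-factor, and the nonzero constant $L_p(1,2,\psi')$ is absorbed into $l$. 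The same device is what makes part (2) work: Zhang's formula controls $L'(1,f/K,\psi_K)$, which equals the derivative at $s=1$ of the product $L(s,f,\psi)L(s,f,\psi')$, and one needs the nonvanishing of $L(1,f,\psi')$ to deduce the equivalence with $L'(1,A/F,\psi)\neq 0$ before applying Kolyvagin--Logachev. Without the auxiliary character, neither the square structure in (1) nor the equivalence in (2) can be established as you describe.
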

\begin{proof}
We can choose a quadratic Hecke character ${\psi'}$ which is unramified at the primes dividing $\doi{n}$ and the conductor is prime to that of $\psi$ such that 
\begin{eqnarray}L_p(1,2,\psi')\in \ratnum(f)^\times \label{tousiki C}\nonumber
\end{eqnarray}
and the quadratic extension over $F$ associated with $\psi\psi'$ is a CM extension with $\doi{p}$ inert.
  
By \cite{Mo1}, Proposition 5.1, 5.3, there exists an open neighborhood $U\in\ratint_p$ and a $p$-adic analytic function $\eta$ on $U$ such that
\begin{eqnarray}\ywrk{L}_p(\Phi, \psi_K, s)^2&=&\eta(s+2)L_p((s+2)/2,s+2,\psi)L_p((s+2)/2,s+2,{\psi'}),\label{tousiki A}\\
\eta(2)&\in& \ratnum(f)^\times,\label{tousiki B}\nonumber\end{eqnarray}
where $\psi_K$ is the Hecke character of $K$ associated with the composition of the fields $F_\psi$ and $F_{\psi'}$, which are the quadratic extensions over $F$ associated with $\psi$ and ${\psi'}$.

Since $L_p(k/2,k,\psi)$ has trivial zero and $\epsilon(f,\psi)=-1$, we see from the functional equation that the order of vanishing of $L_p(k/2,k,\psi)$ is at least 2:
\[L_p(1,2,\psi)=L_p'(1,2\psi)=0.\]
Thus by calculating the second derivative at $s=0$ of (\ref{tousiki A}), we have
\[2\lt(\lt.\frac{d}{ds}\ywrk{L}_p(\Phi,\psi_K,s)\rt|_{s=0}\rt)^2=\eta(2)L_p(1,2,\psi')\lt.\frac{d^2}{dk^2}L_p(k/2,k,\psi)\rt|_{k=2}.\]
Put $l:=2\eta(2)^{-1}L_p(1,2,\psi')^{-1}\in \ratnum(f)^\times$, we have
\[\lt.\frac{d^2}{dk^2}L_p(k/2,k,\psi)\rt|_{k=2}=l\lt(\lt.\frac{d}{ds}\ywrk{L}_p(\Phi,\psi_K,s)\rt|_{s=0}\rt)^2.\]
On the other hand, by Proposition \ref{main results 2}, the right hand side is equal to \[l\lt(\sum_{i=1}^h\logNorm_p^{X_i}(\bfb{P}_{{\psi_K},i})(\mu_{\phi^i})\rt)^2,\]
where $\bfb{P}_{\psi_K}$ is a global point and belongs to the Jacobian $\bigoplus_{i=1}^hJ\lt(X_i\rt)(K_{\psi_K})\otimes_\ratint\ratnum$, where $\coprod_i X_i$ is the Shimura curve as in Section \ref{p-adic uniformization of Shimura curves} (see Remark \ref{global point}).

By \cite{Z1} and the proof of \cite{Mo1}, Corollary 4.2, we have \[\uesen{\bfb{P}}_{\psi_K}\in A(F^\phi)\otimes_\ratint\ratnum,\]
where $\uesen{\bfb{P}}_{\psi_K}$ is the image of $\bfb{P}_{\psi_K}$ by the projection from $\bigoplus_{i=1}^hJ\lt(X_i\rt)(K_{\psi_K})$ to $A(K_{\psi_K})$.  We have (\ref{thm2}) by putting $P_\psi\defeq\uesen{\bfb{P}}_{\psi_K}$.

For (\ref{thm3}), by the results of Zhang ($\cite{Z2}$), we have
\[P_\psi \text{ is of infinite order} \Leftrightarrow L'(1,f/K, \psi_K)\neq0\]
where $L(s, f/K, \psi_K)$ is the Rankin-Selberg convolution $L$-function of $f$ and $\psi_K$.  On the other hand, we have
\[L(s, f/K, \psi_K)=L(s, f, \psi)\cdot L(s,f,\psi').\]
Thus, by the choice of $\psi$ and $\psi'$, we have
\[L'(1,f/K,\psi_K)\neq0 \Leftrightarrow L'(1,f,\psi)\neq0 \Leftrightarrow L'(1,A/F,\psi)\neq 0.\]
The second equivalence follows from $\cite{Z1}$, Theorem A.  Therefore, we have the first assertion of (\ref{thm3}).  The second assertion of (\ref{thm3}) is obtained by the theorem of Kolyvagin-Logachev \cite{KL}.
\qed
\end{proof}

\begin{table}[h]
\begin{center}
\begin{tabular}{l}
Isao Ishikawa \\
Department of Mathematics\\
Faculty of Science\\
Kyoto University\\
Kyoto 606-8502\\
Japan\\
iishikawa@math.kyoto-u.ac.jp

\end{tabular}
\end{center}

\end{table}

\end{document}